\documentclass[a4paper,12pt,oneside, reqno]{amsart}

\usepackage{amsaddr}
\usepackage[bb=boondox]{mathalfa}
\usepackage{cite}
\usepackage{hyperref}
\usepackage{lmodern} 
\usepackage{bm}

\usepackage{tikz}

\numberwithin{equation}{section}
\numberwithin{figure}{section}

\allowdisplaybreaks[3]

\newcommand{\RR}{\mathbb{R}}
\newcommand{\CC}{\mathbb{C}}

\newcommand{\NN}{\mathbb{N}}

\newcommand{\cD}{\mathcal{D}}
\newcommand{\cE}{\mathcal{E}}
\newcommand{\cF}{\mathcal{F}}

\newcommand{\cS}{\mathcal{S}}

\newcommand{\eps}{\varepsilon}
\newcommand{\rmi}{\mathrm{i}}

\newcommand{\loc}{\mathrm{loc}}

\DeclareMathOperator{\supp}{\mathrm{supp}}
\DeclareMathOperator{\dist}{\mathrm{dist}}
\DeclareMathOperator{\ran}{\mathrm{ran}}

\DeclareMathOperator{\dom}{\mathrm{dom}}
\DeclareMathOperator{\spec}{\mathrm{spec}}

\DeclareMathOperator{\spece}{\mathrm{spec}_\mathrm{ess}}

\DeclareMathOperator{\vspan}{span}

\newtheorem{theorem}{Theorem}[section]
\newtheorem{corollary}[theorem]{Corollary}
\newtheorem{lemma}[theorem]{Lemma}
\newtheorem{proposition}[theorem]{Proposition}
\theoremstyle{definition}
\newtheorem{definition}[theorem]{Definition}

\newtheorem{remark}[theorem]{Remark}

\renewcommand{\Tilde}{\widetilde}

\newcommand{\dd}{\mathrm{d}}
\newcommand{\dxy}{\dd x\,\dd y}

\begin{document}
	\title[]{\large Non-resonant angles\\[\smallskipamount]
		in~a~Robin eigenvalue problem}

	\author[]{Konstantin Pankrashkin}
	\address{\footnotesize Carl von Ossietzky Universit\"at Oldenburg,
		Fakult\"at V -- Mathematik und Naturwissenschaften,  Institut f\"ur Mathematik, Ammerl\"ander Heerstr. 114--118, 26129~Oldenburg (Oldb), Germany}
	\email{konstantin.pankrashkin@uol.de}

	\date{}

\begin{abstract}
In an earlier study of the Laplacian eigenvalues with strongly attractive Robin boundary conditions in polygons it was found that if each corner is either concave or non-resonant, then the asymptotic behavior of the individual eigenvalues is determined by an effective one-dimensional operator on the boundary with Dirichlet boundary conditions at the corners. The non-resonance condition was formulated in terms of an eigenvalue bound on truncated sectors, and it was only shown to be satisfied by the right and obtuse opening angles. In the present work, we determine the full range of non-resonant angles: it turns out that the non-resonance condition fails only for the opening angles $\frac{\pi}{2n+1}$ with $n=1,2,3,\dots$, and these critical values are equivalently characterized in terms of the jumps of the eigenvalue counting function and the existence of bounded generalized eigenfunctions. This resolves the numerically supported conjecture stated by Dauge and Lafranche. 
\end{abstract}
%

	\maketitle


%
%
%
%
%

	\setcounter{tocdepth}{2}
	
	\tableofcontents

\section{Introduction}

Over the past few decades, considerable attention was paid to the asymptotic behavior of the eigenvalues
of Laplacians with strongly attractive Robin boundary conditions. Namely, for a bounded Lipschitz domain $\Omega\subset\RR^n$, $n\ge 2$, and $\alpha\in\RR$ denote by $R^\Omega_\alpha$ the self-adjoint Laplacian in $L^2(\Omega)$ acting as $u\mapsto-\Delta u$ on the functions $u$ satisfying the Robin boundary condition $\partial_\nu u=\alpha u$ on $\partial\Omega$, where $\partial_\nu$ is the outer normal derivative, and we denote by $E_j(R^\Omega_\alpha)$
its $j$-th eigenvalue (numbered in non-decreasing order and counting multiplicities).
 One is interested in the asymptotic behavior of $E_j(R^\Omega_\alpha)$ with a fixed $j\in \NN$ for $\alpha\to\infty$. The expression for the associated bilinear form $r^\Omega_\alpha$,
\[
r^\Omega_\alpha(u,u)=\int_\Omega |\nabla u|^2\dd x-\alpha \int_{\partial\Omega}\ u^2\dd s,\quad u\in H^1(\Omega),
\]
easily shows that for each fixed $j$ one has $E_j(R^\Omega_\alpha)\xrightarrow{\alpha\to\infty}-\infty$ and the associated eigenfunctions
concentrate near $\partial\Omega$. Several works \cite{HK, KKR, pp16} found that in the case of $\Omega$ with smooth boundary
important spectral properties of $R^\Omega_\alpha$ for large $\alpha$ are governed by the effective operator
$L^\alpha:=-\Delta_{\partial\Omega}-\alpha H$ on $\partial\Omega$, where $\Delta_{\partial\Omega}$ is the Laplace-Beltrami operator on $\partial\Omega$ and $H$ is the mean curvature of $\partial\Omega$ (defined as the sum of the principal curvatures, with the convention that $H\ge 0$ for convex domains). In particular,
for each fixed $j\in\NN$ one has
\[
E_j(R^\Omega_\alpha)=-\alpha^2+E_j(L^\alpha)+O(1),\quad \alpha\to\infty,
\]
see \cite{pp16}. However, the analysis of these works is based on the existence of tubular coordinates near $\partial\Omega$.
Even the initial steps of this approach fail for non-smooth $\partial\Omega$, and the asymptotic behavior
for the non-smooth case is generally different, see e.g. the reviews in \cite{DFK,bp,LP}.

Progress in the reduction to effective operators in non-smooth settings was achie\-ved in~\cite{kobp}. To simplify the discussion, we neglect the role of the curvature
and look at the role of boundary singularities. Namely, assume that $n=2$ and $\Omega\subset\RR^2$ is a polygon with $M$  vertices  $A_1,\dots, A_M$ (with the cyclic convention $A_{0}:=A_M$ and $A_{M+1}:=A_1$). The boundary
of $\Omega$ is composed of the line segments $\Gamma_j:=[A_j,A_{j+1}]$ having the length $\ell_j:=|\Gamma_j|=|A_{j+1}-A_j|$, $j\in\{1,\dots,M\}$ (with the same cyclic convention). 
We further denote by $2\theta_j\in (0,2\pi)\setminus\{\pi\}$ the angle between $\Gamma_{j-1}$ and $\Gamma_j$ measured inside $\Omega$ (the condition $2\theta_j\ne\pi$ excludes artificial vertices). A vertex $A_j$ is called convex 
for $\theta_j<\frac{\pi}{2}$ and concave for $\theta_j>\frac{\pi}{2}$. We refer to Fig.~\ref{fig-domain} for an illustration.

\begin{figure}
	
	\centering
	
	\tikzset{every picture/.style={line width=0.75pt}} 
	
	\begin{minipage}{0.45\textwidth}\centering
	\scalebox{0.7}{\begin{tikzpicture}[x=0.75pt,y=0.75pt,yscale=-1,xscale=1]
		
		\draw  [fill={rgb, 255:red, 155; green, 155; blue, 155 }  ,fill opacity=0.42 ][line width=1.5]  (163.3,38.19) -- (300.46,130.98) -- (150.44,144) -- (80.44,235.16) -- (57.58,67.49) -- cycle ;
		\draw  [fill={rgb, 255:red, 0; green, 0; blue, 0 }  ,fill opacity=1 ] (291.89,130.98) .. controls (291.89,127.38) and (294.45,124.47) .. (297.6,124.47) .. controls (300.76,124.47) and (303.32,127.38) .. (303.32,130.98) .. controls (303.32,134.57) and (300.76,137.49) .. (297.6,137.49) .. controls (294.45,137.49) and (291.89,134.57) .. (291.89,130.98) -- cycle ;
		\draw  [fill={rgb, 255:red, 0; green, 0; blue, 0 }  ,fill opacity=1 ] (147.59,145.63) .. controls (147.59,142.03) and (150.15,139.12) .. (153.3,139.12) .. controls (156.46,139.12) and (159.02,142.03) .. (159.02,145.63) .. controls (159.02,149.22) and (156.46,152.14) .. (153.3,152.14) .. controls (150.15,152.14) and (147.59,149.22) .. (147.59,145.63) -- cycle ;
		\draw  [fill={rgb, 255:red, 0; green, 0; blue, 0 }  ,fill opacity=1 ] (159.02,39.81) .. controls (159.02,36.22) and (161.58,33.3) .. (164.73,33.3) .. controls (167.89,33.3) and (170.45,36.22) .. (170.45,39.81) .. controls (170.45,43.41) and (167.89,46.33) .. (164.73,46.33) .. controls (161.58,46.33) and (159.02,43.41) .. (159.02,39.81) -- cycle ;
		\draw  [fill={rgb, 255:red, 0; green, 0; blue, 0 }  ,fill opacity=1 ] (51.86,70.74) .. controls (51.86,67.15) and (54.42,64.23) .. (57.58,64.23) .. controls (60.73,64.23) and (63.29,67.15) .. (63.29,70.74) .. controls (63.29,74.34) and (60.73,77.26) .. (57.58,77.26) .. controls (54.42,77.26) and (51.86,74.34) .. (51.86,70.74) -- cycle ;
		\draw  [fill={rgb, 255:red, 0; green, 0; blue, 0 }  ,fill opacity=1 ] (76.15,231.91) .. controls (76.15,228.31) and (78.71,225.4) .. (81.86,225.4) .. controls (85.02,225.4) and (87.58,228.31) .. (87.58,231.91) .. controls (87.58,235.5) and (85.02,238.42) .. (81.86,238.42) .. controls (78.71,238.42) and (76.15,235.5) .. (76.15,231.91) -- cycle ;
		\draw  [draw opacity=0][dash pattern={on 4.5pt off 4.5pt}] (72.94,191.61) .. controls (75.37,191.21) and (77.88,191) .. (80.44,191) .. controls (90.17,191) and (99.14,194.03) .. (106.33,199.12) -- (80.44,231) -- cycle ; \draw  [dash pattern={on 4.5pt off 4.5pt}] (72.94,191.61) .. controls (75.37,191.21) and (77.88,191) .. (80.44,191) .. controls (90.17,191) and (99.14,194.03) .. (106.33,199.12) ;  
		\draw  [draw opacity=0][dash pattern={on 4.5pt off 4.5pt}] (99.71,55.27) .. controls (101.11,59.62) and (101.87,64.26) .. (101.87,69.08) .. controls (101.87,91.1) and (86.05,109.34) .. (65.43,112.51) -- (59.01,69.08) -- cycle ; \draw  [dash pattern={on 4.5pt off 4.5pt}] (99.71,55.27) .. controls (101.11,59.62) and (101.87,64.26) .. (101.87,69.08) .. controls (101.87,91.1) and (86.05,109.34) .. (65.43,112.51) ;  
		\draw  [draw opacity=0][dash pattern={on 4.5pt off 4.5pt}] (201.62,63.15) .. controls (194.34,76.77) and (180.23,86) .. (164.01,86) .. controls (142.48,86) and (124.66,69.74) .. (121.6,48.55) -- (164.01,42.08) -- cycle ; \draw  [dash pattern={on 4.5pt off 4.5pt}] (201.62,63.15) .. controls (194.34,76.77) and (180.23,86) .. (164.01,86) .. controls (142.48,86) and (124.66,69.74) .. (121.6,48.55) ;  
		\draw  [draw opacity=0][dash pattern={on 4.5pt off 4.5pt}] (254.98,135.62) .. controls (254.82,134.1) and (254.74,132.55) .. (254.74,130.98) .. controls (254.74,122.09) and (257.32,113.82) .. (261.75,106.9) -- (297.6,130.98) -- cycle ; \draw  [dash pattern={on 4.5pt off 4.5pt}] (254.98,135.62) .. controls (254.82,134.1) and (254.74,132.55) .. (254.74,130.98) .. controls (254.74,122.09) and (257.32,113.82) .. (261.75,106.9) ;  
		\draw  [draw opacity=0][dash pattern={on 4.5pt off 4.5pt}] (123.73,177.42) .. controls (115.54,169.42) and (110.44,158.14) .. (110.44,145.63) .. controls (110.44,121.37) and (129.63,101.71) .. (153.3,101.71) .. controls (174.91,101.71) and (192.79,118.1) .. (195.74,139.41) -- (153.3,145.63) -- cycle ; \draw  [dash pattern={on 4.5pt off 4.5pt}] (123.73,177.42) .. controls (115.54,169.42) and (110.44,158.14) .. (110.44,145.63) .. controls (110.44,121.37) and (129.63,101.71) .. (153.3,101.71) .. controls (174.91,101.71) and (192.79,118.1) .. (195.74,139.41) ;  
		
		\draw (310.18,119.12) node [anchor=north west][inner sep=0.75pt]    {$A_{1}$};
		\draw (168.59,13.31) node [anchor=north west][inner sep=0.75pt]    {$A_{2}$};
		\draw (20.14,50.75) node [anchor=north west][inner sep=0.75pt]    {$A_{3}$};
		\draw (50.72,220.59) node [anchor=north west][inner sep=0.75pt]    {$A_{4}$};
		\draw (155.3,149.03) node [anchor=north west][inner sep=0.75pt]    {$A_{5}$};
		\draw (189.64,86.07) node [anchor=north west][inner sep=0.75pt]  [font=\large]  {$\Omega $};
		\draw (238.1,55.63) node [anchor=north west][inner sep=0.75pt]    {$\Gamma _{1}$};
		\draw (92.37,23.07) node [anchor=north west][inner sep=0.75pt]    {$\Gamma _{2}$};
		\draw (45.5,136.96) node [anchor=north west][inner sep=0.75pt]    {$\Gamma _{3}$};
		\draw (127.37,180.52) node [anchor=north west][inner sep=0.75pt]    {$\Gamma _{4}$};
		\draw (216.24,144.42) node [anchor=north west][inner sep=0.75pt]    {$\Gamma _{5}$};
		\draw (83.72,167.19) node [anchor=north west][inner sep=0.75pt]    {$2\theta _{4}$};
		\draw (70.87,69.56) node [anchor=north west][inner sep=0.75pt]    {$2\theta _{3}$};
		\draw (146.87,60.56) node [anchor=north west][inner sep=0.75pt]    {$2\theta _{2}$};
		\draw (223.87,108.56) node [anchor=north west][inner sep=0.75pt]    {$2\theta _{1}$};
		\draw (137.87,106.56) node [anchor=north west][inner sep=0.75pt]    {$2\theta _{5}$};

	\end{tikzpicture}}	
\end{minipage}
	\begin{minipage}{0.5\textwidth}
		\caption{An example of a polygon $\Omega$ with $M=5$. The vertex $A_5$ is concave, the other vertices are convex.}\label{fig-domain}
\end{minipage}
\end{figure}

A special role in the analysis of $R^\Omega_\alpha$ is played by the Robin Laplacians on infinite sectors. Namely, for $\theta\in(0,\pi)$  denote by $\cS_\theta$ the following infinite sector of opening angle~$2\theta$:
\[
\cS_\theta=\big\{ (x_1,x_2)\in\mathbb R^2: -\theta<\arg(x_1+\rmi x_2)<\theta\big\},
\]
see Figure~\ref{fig-sector} for an illustration. Clearly, $\cS_\theta$ is convex if and only if $\theta\le\frac{\pi}{2}$, while for $\theta=\frac{\pi}{2}$ one obtains simply the half-plane $(0,\infty)\times\RR$. 	

\begin{figure}[b]
	
	\centering

	\tikzset{every picture/.style={line width=0.75pt}} 

	\begin{minipage}{0.35\textwidth}\centering
		\scalebox{0.7}{\begin{tikzpicture}[x=0.75pt,y=0.75pt,yscale=-1,xscale=1]
		
		\draw  (77.5,151) -- (295.2,151)(91.5,64.6) -- (91.5,229) (288.2,146) -- (295.2,151) -- (288.2,156) (86.5,71.6) -- (91.5,64.6) -- (96.5,71.6)  ;
		\draw  [draw opacity=0][fill={rgb, 255:red, 155; green, 155; blue, 155 }  ,fill opacity=0.55 ] (250.8,81) -- (250.8,221) -- (90.4,151) -- cycle ;
		\draw [line width=2.25]    (90.4,151) -- (265.6,74.2) ;
		\draw [line width=2.25]    (90.4,151) -- (264.4,226.2) ;
		\draw  [draw opacity=0] (148.87,125.94) .. controls (152.17,133.63) and (154,142.1) .. (154,151) .. controls (154,159.69) and (152.26,167.98) .. (149.1,175.53) -- (90.4,151) -- cycle ; \draw  [color={rgb, 255:red, 74; green, 74; blue, 74 }  ,draw opacity=1 ] (148.87,125.94) .. controls (152.17,133.63) and (154,142.1) .. (154,151) .. controls (154,159.69) and (152.26,167.98) .. (149.1,175.53) ;  
		
		\draw (265.4,131) node [anchor=north west][inner sep=0.75pt]    {$x_{1}$};
		\draw (98,74) node [anchor=north west][inner sep=0.75pt]    {$x_{2}$};
		\draw (157.2,126.2) node [anchor=north west][inner sep=0.75pt]    {$\theta $};
		\draw (157.2,155.8) node [anchor=north west][inner sep=0.75pt]    {$\theta $};
		\draw (210,114.4) node [anchor=north west][inner sep=0.75pt]    {$\cS_\theta$};

\end{tikzpicture}}\end{minipage}\begin{minipage}{0.6\textwidth}
\caption{The infinite sector $\cS_\theta$. Here $\theta\in(0,\frac{\pi}{2})$.}\label{fig-sector}	
\end{minipage}

\end{figure}

Let us denote by $T_\theta$ the Laplacian $u\mapsto-\Delta u$ on $\cS_\theta$ with the Robin boundary condition $\partial_\nu u=u$
on $\partial\cS_\theta$, realized as a self-adjoint operator in $L^2(\cS_\theta)$. It is known that the essential spectrum of $T_\theta$ is always $[-1,\infty)$, i.e. it is independent of $\theta$, and it has $\kappa(\theta)<\infty$ discrete
eigenvalues $\cE_1(\theta),\dots\cE_{\kappa(\theta)}(\theta)$, while $\kappa(\theta)=0$ for $\theta\ge \frac{\pi}{2}$
(i.e. there are no discrete eigenvalues at all if the sector $\cS_\theta$ is concave),
and $\kappa(\theta)\ge 1$ for $\theta<\pi/2$. Based on these numbers we associate with $\Omega$
the following natural quantities:
\begin{align*}
	K&:=\kappa(\theta_1)+\dots +\kappa(\theta_M),\\
	\cE&:=\text{ the disjoint union } \bigsqcup_{j=1}^M\big\{ \cE_n(\theta_j):\ n=1,\dots,\kappa(\theta_j)\big\},\\
	\cE_n&:=\text{ the $n$-th element of $\cE$ when numbered in the non-decreasing order.}
\end{align*}

\begin{remark}
The finiteness of the discrete spectrum of $T_\theta$ and some estimates for the counting function $\kappa$
and the eigenvalues $\cE_n(\theta)$ were established in \cite{khp} using abstract techniques. These results were then applied
in subsequent works \cite{dl,khalile,kobp}  to study domains with corners. The authors of all these works
were not aware of the earlier paper~\cite{kmz} that established explicit analytic expressions for the eigenvalues $\cE_n(\theta)$
and the associated eigenfunctions and showed that they exhaust the whole negative point spectrum: a detailed overview is given in Subsection~\ref{ssec-sectors}. For the current understanding we just note that the function $\theta\mapsto \kappa(\theta)$ is monotonically decreasing and right-continuous, with jumps at the critical values
\begin{equation}
	\label{theta-critical}
\theta_n:=\dfrac{\pi}{2(2n+1)},\quad n=0,1,2,\dots,\quad \kappa(\theta_n^-)-\kappa(\theta_n)=1.
\end{equation}
Note that the same values were independently predicted numerically in \cite{dl}.
\end{remark}

It was shown in \cite{khalile} that for each fixed $j\in\{1,\dots,K\}$ one has, with a suitable $c>0$,
\[
E_j(R^\Omega_\alpha)=\cE_j \alpha^2+O(e^{-c\alpha}),\quad \alpha\to\infty,
\]
and the respective eigenfunctions are strongly localized near the corners. Due to these observations, the first
$K$ eigenvalues of $R^\Omega_\alpha$ are referred to as \emph{corner-induced}. It is also straightforward to see that
$E_{K+n}(R^\Omega_\alpha)=-\alpha^2+o(\alpha^2)$ for any fixed $n\in\NN$, see \cite{khp}, but one has the natural question
whether a more detailed asymptotic expansion can be obtained. An important step in this direction was made in \cite{kobp}, and it is based on the notion of a non-resonant corner.

\begin{figure}
	\centering
	
	\begin{minipage}{0.4\textwidth}\centering
		\scalebox{0.75}{\begin{tikzpicture}[x=0.75pt,y=0.75pt,yscale=-1,xscale=1]
			
			\draw  (97.5,171) -- (315.2,171)(111.5,84.6) -- (111.5,249) (308.2,166) -- (315.2,171) -- (308.2,176) (106.5,91.6) -- (111.5,84.6) -- (116.5,91.6)  ;
			\draw [color={rgb, 255:red, 155; green, 155; blue, 155 }  ,draw opacity=1 ][line width=1.5]    (110.4,171) -- (285.6,94.2) ;
			\draw [color={rgb, 255:red, 155; green, 155; blue, 155 }  ,draw opacity=1 ][line width=1.5]    (110.4,171) -- (284.4,246.2) ;
			\draw  [color={rgb, 255:red, 0; green, 0; blue, 0 }  ,draw opacity=1 ][fill={rgb, 255:red, 155; green, 155; blue, 155 }  ,fill opacity=0.55 ][line width=2.25]  (270,170.4) -- (242,227.6) -- (110.4,171) -- (243.6,113.2) -- cycle ;
			\draw  [draw opacity=0] (168.87,145.94) .. controls (172.17,153.63) and (174,162.1) .. (174,171) .. controls (174,179.69) and (172.26,187.98) .. (169.1,195.53) -- (110.4,171) -- cycle ; \draw  [color={rgb, 255:red, 74; green, 74; blue, 74 }  ,draw opacity=1 ] (168.87,145.94) .. controls (172.17,153.63) and (174,162.1) .. (174,171) .. controls (174,179.69) and (172.26,187.98) .. (169.1,195.53) ;  
			\draw  [fill={rgb, 255:red, 0; green, 0; blue, 0 }  ,fill opacity=1 ] (267,170.4) .. controls (267,168.74) and (268.34,167.4) .. (270,167.4) .. controls (271.66,167.4) and (273,168.74) .. (273,170.4) .. controls (273,172.06) and (271.66,173.4) .. (270,173.4) .. controls (268.34,173.4) and (267,172.06) .. (267,170.4) -- cycle ;
			\draw  [fill={rgb, 255:red, 0; green, 0; blue, 0 }  ,fill opacity=1 ] (108.4,171) .. controls (108.4,169.34) and (109.74,168) .. (111.4,168) .. controls (113.06,168) and (114.4,169.34) .. (114.4,171) .. controls (114.4,172.66) and (113.06,174) .. (111.4,174) .. controls (109.74,174) and (108.4,172.66) .. (108.4,171) -- cycle ;
			\draw  [fill={rgb, 255:red, 0; green, 0; blue, 0 }  ,fill opacity=1 ] (240.6,113.2) .. controls (240.6,111.54) and (241.94,110.2) .. (243.6,110.2) .. controls (245.26,110.2) and (246.6,111.54) .. (246.6,113.2) .. controls (246.6,114.86) and (245.26,116.2) .. (243.6,116.2) .. controls (241.94,116.2) and (240.6,114.86) .. (240.6,113.2) -- cycle ;
			\draw  [fill={rgb, 255:red, 0; green, 0; blue, 0 }  ,fill opacity=1 ] (239,227.6) .. controls (239,225.94) and (240.34,224.6) .. (242,224.6) .. controls (243.66,224.6) and (245,225.94) .. (245,227.6) .. controls (245,229.26) and (243.66,230.6) .. (242,230.6) .. controls (240.34,230.6) and (239,229.26) .. (239,227.6) -- cycle ;
			\draw  [dash pattern={on 0.75pt off 0.75pt}] (236.97,213.78) -- (246.4,218.18) -- (242,227.6) -- (232.58,223.2) -- cycle ;
			\draw  [dash pattern={on 0.75pt off 0.75pt}] (243.37,112.57) -- (247.81,121.97) -- (238.4,126.41) -- (233.97,117) -- cycle ;
			
			\draw (285.4,152) node [anchor=north west][inner sep=0.75pt]    {$x_{1}$};
			\draw (117,90.8) node [anchor=north west][inner sep=0.75pt]    {$x_{2}$};
			\draw (90,152) node [anchor=north west][inner sep=0.75pt]    {$O$};
			\draw (177.2,146.2) node [anchor=north west][inner sep=0.75pt]    {$\theta $};
			\draw (177.2,175.8) node [anchor=north west][inner sep=0.75pt]    {$\theta $};
			\draw (222,92) node [anchor=north west][inner sep=0.75pt]    {$A_{r}^{+}$};
			\draw (222,230) node [anchor=north west][inner sep=0.75pt]    {$A_{r}^{-}$};
			\draw (272,173.8) node [anchor=north west][inner sep=0.75pt]    {$B_{r}$};
			\draw (150.18,128.33) node [anchor=north west][inner sep=0.75pt]  [rotate=-336.39]  {$\partial _{\nu } u=u$};
			\draw (150.23,191.55) node [anchor=north west][inner sep=0.75pt]  [rotate=-23.49]  {$\partial _{\nu } u=u$};
			\draw (259.6,118.8) node [anchor=north west][inner sep=0.75pt]    {$\partial _{\nu } u=0$};
			\draw (260,202) node [anchor=north west][inner sep=0.75pt]    {$\partial _{\nu } u=0$};
			\draw (224.8,142.4) node [anchor=north west][inner sep=0.75pt]    {$\cS_{\theta }^{r}$};

		\end{tikzpicture}}
	\end{minipage}
	\begin{minipage}{0.55\textwidth}\raggedright \caption{\small The truncated sector $\cS^r_\theta$. One has $|OA^\pm_r|=r$.}\label{figure-intro}\end{minipage}
\end{figure}

For a chosen $\theta\in(0,\frac{\pi}{2})$, for each $r>0$ denote by $A^\pm_r$ the two points lying on the two boundary rays of the sector $\cS_\theta$ at the distance $r>0$ from the origin $O$, and further denote
by $B_r$ the intersection point of the straight lines passing through $A^\pm_r$ perpendicularly to the boundary, see Figure~\ref{figure-intro}. Denote by $\cS^r_\theta$ the bounded quadrangle $OA^+_r B_r A^-_r$, and further denote
by $\Tilde N^r_{\theta}$ the Laplacian $u\mapsto-\Delta u$ in $\cS^r_\theta$ with the Robin boundary condition $\partial_\nu u=u$ at the sides $OA^\pm_r$ and the Neumann boundary condition at the sides $A^\pm_rB_r$.
It is easy to see that the first $\kappa(\theta)$ eigenvalues of $\Tilde N^r_\theta$ converge to the respective eigenvalues
of the Robin Laplacian $T_\theta$ on the infinite sector $\cS_\theta$ for $r\to\infty$, and the non-resonance condition
concerns the asymptotics of the next eigenvalue. Namely, a half-angle $\theta\in(0,\frac{\pi}{2})$ is called \emph{non-resonant}, if for some $C>0$ one has
\begin{equation}
	\label{eq-nonres}
E_{\kappa(\theta)+1}(\Tilde N^r_\theta)\ge-1+ \dfrac{C}{r^2} \text{ for large $r$},
\end{equation}
and is called  \emph{resonant} otherwise. The following result holds true \cite[Thm.~1.1]{kobp}:
\begin{theorem}\label{thm0}
	Assume that each corner of $\Omega$ is either concave or non-resonant, i.e. for each $j\in\{1,\dots, M\}$
	one has either $\theta_j>\frac{\pi}{2}$ or \eqref{eq-nonres} holds for $\theta:=\theta_j$. Then
	for each $n\in\NN$ one has
	\[
	E_{K+n}(R^\Omega_\alpha)=-\alpha^2+E_n\Big(\bigoplus_{j=1}^M L_j\Big)+O\Big(\dfrac{\log\alpha}{\sqrt{\alpha}}\Big),\quad \alpha\to\infty,
	\]
	where $L_j$ is the Dirichlet Laplacian on $(0,\ell_j)$.	
\end{theorem}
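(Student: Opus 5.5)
Since Theorem~\ref{thm0} is quoted from \cite[Thm.~1.1]{kobp}, the plan is to reproduce the structure of that argument: a Dirichlet--Neumann bracketing on a partition of $\Omega$ adapted to the corners, followed by a comparison of each piece with a model operator. We scale by $\alpha$, so that $R^\Omega_\alpha$ is unitarily equivalent to $\alpha^2$ times the Robin Laplacian with parameter $1$ on the dilated polygon $\alpha\Omega$. There the side lengths are $\alpha\ell_j$, and the claim becomes
\[
E_{K+n}=-1+\alpha^{-2}E_n\Big(\bigoplus_j L_j\Big)+O\big(\alpha^{-2}\cdot\alpha^{-1/2}\log\alpha\big).
\]
Near each vertex $A_j$ we cut out a truncated sector $\cS^{r}_{\theta_j}$ (or its concave analogue) with $r=r(\alpha)\to\infty$ but $r\ll\alpha$; the natural choice is $r\sim \alpha^{1/2}$ up to logarithms, which is presumably the source of the error term. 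The complement of these corner pieces is a union of rectangles of length $\alpha\ell_j-O(r)$ adjacent to the sides. Far from the boundary there is an interior region where the Robin condition is irrelevant and the form is bounded below by $0$.

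\textbf{Upper bound.} We use the min-max principle with test functions of two kinds. The first kind are the $K$ corner eigenfunctions, cut off; these are exponentially localized and contribute eigenvalues $\cE_j+O(e^{-c\alpha})$. The second kind are products $\psi_j(s)\,e^{-t}$ along each side, where $t$ is the distance to the side and $\psi_j$ is a Dirichlet eigenfunction on $(r,\alpha\ell_j-r)$. The Dirichlet condition at the ends lets us cut off before the corners without any loss. This gives $-1+\alpha^{-2}E_n(\oplus L_j)(1+O(r/\alpha))$, and the interaction with the corner functions is exponentially small. This direction does not need non-resonance.

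\textbf{Lower bound.} This is the main step. We impose Neumann conditions on the interfaces and use an IMS-type decomposition, so that the operator is bounded below by a direct sum of:
\begin{enumerate}
\item[(a)] the corner operators $\Tilde N^r_{\theta_j}$;
\item[(b)] half-strip operators on the side rectangles, which separate variables into $-1$ plus a 1D operator on the side;
\item[(c)] a nonnegative interior part.
\end{enumerate}
The key point is that on a corner piece, the eigenvalues beyond the first $\kappa(\theta_j)$ are $\ge -1+C/r^2$. For non-resonant convex corners this is exactly \eqref{eq-nonres}. For concave corners the analogous bound has to be proved separately, by a Hardy or separation-of-variables argument in polar coordinates, or it is taken from \cite{kobp}.

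Since $C/r^2\gg\alpha^{-2}$, these corner states lie above the window of interest. Consequently, in the window only the $K$ corner eigenvalues and the side operators contribute. However, Neumann conditions at the interfaces give 1D operators on the sides with Neumann ends, and these are too low. To fix this, we combine the bracketing with a coupling argument. On the interface the functions orthogonal to the corner ground states are forced to be small, because of the spectral gap $C/r^2$ in the corner. Quantitatively, a function whose energy is $\le -1+O(\alpha^{-2})$ has trace on the interface of size $O(r\alpha^{-1})$ relative to its norm. This effectively produces a Dirichlet condition, with an error controlled by $r/\alpha$ and $1/(r^2\cdot\text{gap})$. Balancing these errors gives $r\sim\sqrt\alpha\log\alpha$ and the stated $O(\log\alpha/\sqrt\alpha)$.

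We expect this effective Dirichlet step to be the main obstacle. The plan is to carry it out via a Schur-complement/Birman--Schwinger reduction onto the transversal ground state $e^{-t}$, where the gap in \eqref{eq-nonres} enters as invertibility of the reduced corner operator with bound $r^2/C$.
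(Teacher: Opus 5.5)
The paper does not prove this theorem. It is quoted from \cite[Thm.~1.1]{kobp}, and the present work only concerns which angles satisfy the hypothesis \eqref{eq-nonres}. Taken on its own, your proposal has a genuine gap, and it sits exactly at the one step where non-resonance enters.

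\textbf{What is missing.} Your upper bound is fine, and as you say it needs no non-resonance. For the lower bound you correctly note that exact Neumann bracketing leaves the side operators with Neumann ends, which is too low. The remedy, however, is only asserted. The statements that the corner gap ``forces small traces'' and ``effectively produces a Dirichlet condition'' are never derived from \eqref{eq-nonres}, and the Schur-complement/Birman--Schwinger reduction is never specified.

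Three further problems:
\begin{itemize}
\item Your quantitative trace claim is an a posteriori property of low-energy functions. Such a statement cannot be inserted into a min-max lower bound. Without orthogonality to the corner bound states it is false: a corner eigenfunction plus a small fixed multiple of a function with large trace on the cut still has energy below $-1$.
\item The error bookkeeping does not hold together. With gap $C/r^2$ one has $1/(r^2\cdot\mathrm{gap})=1/C$, which is not small. Neither of your two error terms decreases in $r$, so nothing in the argument produces the ``balance'' $r\sim\sqrt\alpha\log\alpha$.
\item An IMS partition of unity is not admissible at the corner--side interfaces. Its localization error is of order $r^{-2}\gg\alpha^{-2}$ in the dilated picture, i.e.\ larger than the effective eigenvalues. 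The decomposition there has to be exact, which is why the cuts $A^\pm_rB_r$ are perpendicular to the sides.
\end{itemize}

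\textbf{One way to close the gap.} You need an a priori trace bound on the corner piece, and it does follow from \eqref{eq-nonres}. Let $q$ be the form of $\Tilde N^r_\theta+1$, and let $u\in H^1(\cS^r_\theta)$ be orthogonal to the first $\kappa(\theta)$ eigenfunctions $\phi_k$.
\begin{itemize}
\item By \eqref{eq-nonres}, $\|u\|^2\le C^{-1}r^2q(u)$.
\item The trace inequality on $OA^\pm_r$, uniform in $r$, gives $\|\nabla u\|^2\le 2q(u)+c\|u\|^2$.
\item The trace inequality on the cuts then yields $\|u\|^2_{L^2(A^+_rB_r\cup A^-_rB_r)}\le A r^2 q(u)$ with $A$ independent of $r$.
\end{itemize}

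Now keep continuity across the cuts in the bracketing. Write the corner component as $\sum_k c_k\phi_k+u^\perp$; the traces of the $\phi_k$ on the cuts are $O(e^{-br})$ by Agmon-type decay. Split
$q(u^\perp)\ge\frac{C}{2r^2}\|u^\perp\|^2+\frac{1}{2Ar^2}\|u^\perp\|^2_{L^2(\mathrm{cuts})}$,
and project the side rectangles onto the transversal ground state. The second term then becomes a Robin term of size $r^{-2}|g|^2$ at the ends of the one-dimensional side operators. The cross terms with the $\phi_k$ only shift the $K$ negative corner levels by $O(e^{-2br})$.

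The resulting minorant is a direct sum. A Robin end with parameter $\sigma$ dominates a Dirichlet end placed at distance $1/\sigma$: extend $g$ linearly to zero, and the Dirichlet energy of the extension equals the Robin term. Hence the side eigenvalues are bounded below by the Dirichlet eigenvalues on lengths $\alpha\ell_j+O(r^2)$. With $r=M\log\alpha$ and $M$ large, this gives the lower bound with relative error $O(r^2/\alpha)$, which is within the stated rate.

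Some argument of this kind is needed before your proof is complete. You also still need the concave-corner analogue of \eqref{eq-nonres}, which you defer to \cite{kobp}.
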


Note that the effective operator $\bigoplus_{j=1}^M L_j$ can be interpreted as the Laplacian on $\partial\Omega$ with Dirichlet boundary conditions imposed at all the corners. The above result is useful only if one can practically check the non-resonance condition at least for some values of $\theta$. In \cite[Prop.~3.10]{kobp}
one shows (by combining a separation of variables and monotonicity) that \eqref{eq-nonres} is satisfied at least
by right and obtuse corners, i.e. for $\theta\in[\frac{\pi}{4},\frac{\pi}{2})$. At the same time, it was seen
in \cite[Sec.~6.1]{kobp} that it is \emph{not} satisfied for $\theta=\frac{\pi}{6}$: this was done in an indirect way
by showing that the asymptotic behavior of the eigenvalues of $R^\Omega_\alpha$ for the equilateral triangle differs from the one stated
in Theorem~\ref{thm0}. Note that the condition~\eqref{eq-nonres} is easily adapted to other operator types, see e.g. \cite{bkp}, and, as already mentioned in \cite[Sec.~6.3]{kobp}, it is motivated by similar problem settings in waveguides.

The condition \eqref{eq-nonres} was subsequently analyzed numerically  and popularized by Dauge and Lafranche, see e.g.  \cite{dl}, and they arrived at the following conjecture: \emph{all half-angles $\theta$ that differ from the critical values $\theta_n$ in \eqref{theta-critical} are non-resonant} (or equivalently: a half-angle $\theta$ is non-resonant provided it is \emph{not} a point of discontinuity of the eigenvalue counting function $\kappa$). In the present work we prove this conjecture.
The link between the two quite different characterizations of non-resonant half-angles
is made using an additional notion which seems new in the context of Robin Laplacians but is a well-established object in  the analysis of waveguides, see e.g. \cite{bakh,grieser,naz1,p17}.
\begin{definition}\label{defin-threshold}
	We say that the Robin Laplacian $T_\theta$ in the infinite sector $\cS_\theta$ has a \emph{threshold resonance} if there exists a weak solution $0\not\equiv F\in L^\infty(\cS_\theta)$ of the boundary value problem
	\begin{equation}
		\label{dff1}
	-\Delta F=-F \text{ in }\cS_\theta,\quad \partial_\nu F=F \text{ on }\partial\cS_\theta.		
	\end{equation}
	The problem \eqref{dff1} is understood as follows: $F\in H^1(U)$ for any bounded subdomain $U\subset\cS_\theta$, and for any $\varphi\in H^1(\cS_\theta)$ with bounded support it holds that
	\[
	\int_{\cS_\theta}\nabla F\cdot\nabla\varphi\,\dd x-\int_{\partial\cS_\theta}F\varphi\,\dd s
	=-\int_{\cS_\theta}F\varphi\,\dd x.
	\]
	While the problem~\eqref{dff1} has formally the same form  as the eigenvalue problem at the edge $-1$ of the essential spectrum of $T_\theta$,
	the ``generalized eigenfunction'' $F$ is only assumed to be bounded, but is \emph{not} assumed to belong to $L^2(\cS_\theta)$.
\end{definition}

Our main results can be summarized as follows.
\begin{theorem}\label{thm-main}
For each half-angle $\theta\in (0,\frac{\pi}{2})$ the following three statements
are equivalent:
\begin{itemize}
	\item[(A)] $\theta$ is non-resonant in the sense of \eqref{eq-nonres},
	\item[(B)] the Robin Laplacian in the sector $\cS_\theta$ has \underline{no} threshold resonance,
	\item[(C)] the eigenvalue counting function $\kappa$ is constant near $\theta$.
\end{itemize}
\end{theorem}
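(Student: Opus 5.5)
The plan is to prove (B)$\Leftrightarrow$(C) from the explicit spectral information on $T_\theta$ recalled from \cite{kmz} in Subsection~\ref{ssec-sectors}, and (A)$\Leftrightarrow$(B) through a compactness and quasimode argument on the truncated sectors $\cS^r_\theta$. By the description of $\kappa$ in \eqref{theta-critical}, statement (C) fails exactly for $\theta=\theta_n$, $n\ge1$. For (B)$\Leftrightarrow$(C) I would write a bounded solution of \eqref{dff1} in polar coordinates, or better in the plane-wave form used by \cite{kmz}. Far from the vertex, along each boundary ray, a bounded solution must asymptotically be a combination of the one-dimensional threshold mode $e^{-\dist(x,\partial\cS_\theta)}$ along that side and of decaying terms. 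The known eigenfunctions are finite sums of exponentials $e^{x\cdot\omega}$ with $|\omega|=1$, which are the Robin-compatible plane waves. At $\theta=\theta_n$ the eigenvalue $\cE_{\kappa}(\theta)$ reaches $-1$ and the corresponding finite exponential sum degenerates into one that is bounded but not $L^2$. This sum is the explicit threshold resonance, and we check boundedness directly. For $\theta\ne\theta_n$ I would show there is none: take a bounded $F$ and subtract its projection onto the threshold mode along each side. What remains decays exponentially, so a Green's identity or Mellin-transform argument in the angular variable gives a solvability condition. That condition is a trigonometric equation in $\theta$ whose roots are exactly $\theta_n$, namely $\cos((2n+1)\theta)=0$.

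For (A)$\Rightarrow$(B) I argue by contraposition. Suppose $F$ is a threshold resonance. I construct test functions on $\cS^r_\theta$ from $F$ with a cutoff of scale $r$, and combine them with the $\kappa(\theta)$ eigenfunctions of $T_\theta$, which are exponentially localized. The bounded $F$ behaves like $c_\pm e^{-\dist}$ along each side, so its Rayleigh quotient after cutting off at scale $r$ is $-1+o(r^{-2})$. Indeed, the cutoff costs $O(r^{-1})$ in energy, while the $L^2$ norm grows like $r$, giving $-1+O(r^{-2})$ with a small constant. To get a genuine $o(r^{-2})$ I would instead use the Neumann conditions at $A^\pm_rB_r$, which are compatible with $F$ up to exponentially small errors, so no cutoff is needed. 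Then the min-max principle gives $E_{\kappa+1}(\Tilde N^r_\theta)\le -1+o(r^{-2})$, which contradicts \eqref{eq-nonres}.

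The converse (B)$\Rightarrow$(A) is the main obstacle. Suppose $E_{\kappa+1}(\Tilde N^r_\theta)=-1+\eps_r r^{-2}$ with $\eps_r\to0$ along a sequence $r\to\infty$. Take the eigenfunction $u_r$, orthogonal to the first $\kappa$ eigenfunctions, and normalize it in $L^\infty$ rather than in $L^2$, for instance by its value on a fixed compact set near the vertex. Away from the vertex, separation of variables along each side reduces to the one-dimensional Neumann-type problem on a segment of length about $r$. There $u_r$ is close to $c(t)e^{-\dist}$, with $c$ solving $-c''=\eps_r r^{-2}c$ and Neumann at $t=r$, so $c$ is nearly constant. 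This yields uniform $L^\infty$ bounds, and elliptic estimates give local $H^1$ convergence to a bounded solution $F$ of \eqref{dff1}. The difficulty is to ensure $F\not\equiv0$, that is, to exclude that the mass escapes to infinity. Here orthogonality to the bound states and the near-constancy of $c$ should force a nontrivial vertex contribution. If mass escaped along the sides with $c$ nearly constant, the limit would still be nonzero near the vertex unless $c\to0$, and that case is excluded by the normalization. Once $F\not\equiv0$ is secured, it is a threshold resonance, which contradicts (B).
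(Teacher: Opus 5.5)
Your treatment of the critical angles (the Ursell sums $\Psi_n$ are bounded, non-$L^2$ threshold solutions) agrees with Remark~\ref{rem26}, and so does your cutoff-free test function for (A)$\Rightarrow$(B), which agrees with Lemma~\ref{prop41a}: the error is an exponentially small boundary term on the Neumann sides, set against $\|F\|^2_{L^2}$ growing linearly in $r$.

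The first genuine gap is (C)$\Rightarrow$(B), i.e.\ the absence of bounded solutions of \eqref{dff1} for $\theta\neq\theta_n$. Three steps are missing:
\begin{itemize}
\item Writing an arbitrary bounded solution in the plane-wave form of \cite{kmz} presupposes what is to be shown.
\item The asymptotics $F\approx c_\pm e^{-\dist(x,\partial\cS_\theta)}$ along the sides are asserted, not proved.
\item The ``solvability condition'' with roots $\cos((2n+1)\theta)=0$ is never derived.
\end{itemize}
The tools you name do not give this condition directly. The problem has an intrinsic length scale, so a Mellin transform produces difference equations rather than an algebraic condition. A Green's identity against complex plane waves $e^{x\cdot\omega}$ with $\omega\cdot\omega=1$ leads to the functional-difference equations of the classical sloping-beach problem, and solving those is precisely the hard part. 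Odd solutions would also have to be excluded.

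The paper uses a soft variational idea instead. For $F\in\cF^N_\theta$, the logarithmic cutoff gives $(n^\theta+I)(\chi_RF,\chi_RF)\le\|F\|_\infty^2\,\theta/\log R\to0$, and $(n^\theta+I)(v,\chi_RF)=0$ for compactly supported $v$. Stretching $U^\theta$ onto $U^{\theta'}$ with $\theta'<\theta$ lowers the form by $\delta\|\partial_xu\|^2$. Moreover $\partial_xF\not\equiv0$, because $ce^{-y}$ violates the Neumann condition on the oblique side. This yields $\kappa(\theta')\ge\kappa(\theta)+\dim\cF^N_\theta$ for all $\theta'<\theta$ (Lemma~\ref{lem32}). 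Odd solutions are excluded by extending by zero and writing the form as $\iint(|\partial_xu|^2+|\partial_yu+u|^2)\,\dxy$ (Lemma~\ref{lemdd}). Your (A)$\Rightarrow$(B) inherits this gap: the asymptotic form of $F$ it uses is only available once a resonance is known to force $\theta=\theta_n$, where the explicit $\Psi_n$ can be used.

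The second gap is the non-vanishing of the limit in (B)$\Rightarrow$(A). You rightly call this the main obstacle, but you do not resolve it, and your two suggested normalizations lead to different problems.
\begin{itemize}
\item If you normalize by the values near the vertex, $F\not\equiv0$ is automatic, but a global $L^\infty$ bound is missing. The near-constancy of $c$ controls only the $e^{-\dist}$-component along the sides, and it presupposes control of the vertex forcing that drives $c$.
\item If you normalize $\|u_r\|_\infty=1$, the limit is bounded, but ``$c\to0$ is excluded by the normalization'' is exactly what must be proved. You must show that $F=0$ forces $\|u_r\|_\infty\to0$ on the whole domain: near the Neumann sides and the far corners, and for the transversally excited part.
\end{itemize}
The paper does this in four steps:
\begin{itemize}
\item an Agmon estimate with weight $e^{\gamma y}/(1+\gamma x)$, giving uniform smallness near the oblique side (Lemma~\ref{lem410});
\item a cutoff and extension by zero to the half-strip $(0,r)\times(0,\infty)$, with a Dirichlet condition at $x=0$ and a forcing that is $L^2$-small and exponentially decaying in $x$;
\item for the component along $e^{-y}$, the bound $|G|\le C\min\{x,t\}$ on the Dirichlet--Neumann Green function, uniform because $-a\le r_j^2(\lambda_j+1)\le b<\pi^2/4$ (Lemma~\ref{lem-gj});
\item a uniform $H^2$ resolvent bound for the orthogonal complement, based on $D_j\otimes I+I\otimes Q_\perp\ge0>\lambda_j$ (Lemma~\ref{lemh21}).
\end{itemize}
Orthogonality to the bound states is not needed here. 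The failure of (A) gives only the upper half of the bound on $r_j^2(\lambda_j+1)$; the lower half is \eqref{lest1} from \cite{kobp}. Neither yields your $\eps_r\to0$, but boundedness of $r_j^2(\lambda_j+1)$ is all that is required.
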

By taking into account the values \eqref{theta-critical} at which jumps of $\kappa$ occur, we deduce that the above Dauge-Lafranche conjecture holds true:
\begin{corollary}
	The non-resonance condition~\eqref{eq-nonres} fails precisely for the critical half-angles
	\[
	\theta\in\Big\{\dfrac{\pi}{2(2n+1)}:\, n\in\NN\Big\}.
	\]
\end{corollary}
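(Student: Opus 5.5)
The corollary follows by combining the equivalence (A)$\Leftrightarrow$(C) of Theorem~\ref{thm-main} with the description of the jumps of $\kappa$ recalled in the Remark, namely \eqref{theta-critical}. The argument only has to turn the statement ``$\kappa$ is constant near $\theta$'' into an explicit list of angles in $(0,\frac{\pi}{2})$.

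\emph{Step 1: reformulating (C).} From \eqref{theta-critical}, $\kappa$ is monotonically decreasing and right-continuous. Its only discontinuities are at the points $\theta_n=\frac{\pi}{2(2n+1)}$, $n\ge0$, and each of these is a genuine jump, since $\kappa(\theta_n^-)-\kappa(\theta_n)=1$. An integer-valued monotone function is locally constant exactly away from its jump points. Since the jump set $\{\theta_n\}$ accumulates only at $0$, every $\theta\in(0,\frac{\pi}{2})$ with $\theta\ne\theta_n$ for all $n$ has a neighbourhood containing no $\theta_n$. So $\kappa$ is constant near such a $\theta$. Conversely, at $\theta=\theta_n$ the jump means $\kappa$ is not constant on any neighbourhood of $\theta_n$.

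\emph{Step 2: restricting to the admissible range.} The value $\theta_0=\frac{\pi}{2}$ is excluded, because the non-resonance condition \eqref{eq-nonres} and Theorem~\ref{thm-main} concern only $\theta\in(0,\frac{\pi}{2})$. For $n\ge1$ we have $\theta_n\in(0,\frac{\pi}{2})$. Hence, within $(0,\frac{\pi}{2})$, statement (C) fails precisely on the set $\{\frac{\pi}{2(2n+1)}:n\in\NN\}$.

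\emph{Step 3: applying Theorem~\ref{thm-main}.} By (A)$\Leftrightarrow$(C), the angle $\theta$ is resonant (that is, \eqref{eq-nonres} fails) if and only if (C) fails. By Step~2 this happens exactly for $\theta$ in the stated set, which proves the corollary.

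The argument is routine once Theorem~\ref{thm-main} and the jump structure \eqref{theta-critical} are available. The one point needing care is that \eqref{theta-critical} gives a jump of size exactly $1$ at every $\theta_n$, so no critical value is a removable point where $\kappa$ happens to stay constant. This is supplied by the explicit eigenvalue description from \cite{kmz} reviewed in Subsection~\ref{ssec-sectors}.
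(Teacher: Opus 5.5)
Your proposal is correct and follows the paper's own argument: the corollary is read off from the equivalence (A)$\iff$(C) in Theorem~\ref{thm-main} together with the fact that $\kappa$ jumps exactly at $\theta_n=\frac{\pi}{2(2n+1)}$, with $\theta_0=\frac{\pi}{2}$ excluded because only $\theta\in(0,\frac{\pi}{2})$ is considered. Your additional remarks make explicit what the paper leaves implicit: an integer-valued $\kappa$ is constant near $\theta$ exactly when it is continuous there, and the jump set accumulates only at $0$.
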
	

This shows that Theorem~\ref{thm0} covers a much larger class of polygons compared to what was explicitly established in~\cite{kobp}.

The proof of Theorem~\ref{thm-main} is quite involved and occupies the rest of the text. In Section~\ref{sec2}
we first summarize the necessary information about the Robin Laplacian in the infinite sectors, including 
Ursell solutions describing generalized eigenfunctions~\cite{kmz,ursell}. Then we recall some elliptic estimates that will allow us to control the boundedness of the solutions to \eqref{dff1}. The equivalence (B)$\iff$(C) is shown in Section~\ref{sec3}, and actually it covers the whole range $\theta\in(0,\pi)$.
It should be said that the implication (C)$\implies$(B) is quite standard: it uses the idea that
a non-trivial solution of \eqref{dff1} supplies an additional test function, and it only uses the sector geometry
and the monotonicity with respect to the angle, while the direction (B)$\implies$(C) is essentially a direct consequence
of the existence of Ursell solutions from Section~\ref{sec2}. The proof of the equivalence (A)$\iff$(B) is covered in Section~\ref{sec4}. The implication (A)$\implies$(B) is quite straightforward but uses the specific form of Ursell solutions
and uses the same idea as above (these solutions provide an additional test function for $\Tilde N^r_\theta$).
The implication (B)$\implies$(A) is the most difficult part, and it combines a truncation argument with some ideas
of the paper~\cite{bakh}  dealing with threshold resonances in waveguides. The key observation is that if the non-resonance
condition~\eqref{eq-nonres} fails, then the respective eigenfunctions converge (in the sense made precise below)
to a solution of \eqref{dff1}, and the main efforts are put in showing that this solution is not identically zero.
Overall, our proof of the implication (C)$\implies$(A) does not use the explicit expressions
of Robin eigenvalues and eigenfunctions in the sectors and is potentially adaptable to other classes of differential operators in sector-like geometries.

\section{Preparations}\label{sec2}

\subsection{Robin Laplacians in infinite sectors}\label{ssec-sectors}	

In this subsection we describe in greater detail some properties of the Robin Laplacians $T_\theta$
in the sectors
\[
\cS_\theta=\big\{ (x_1,x_2)\in\mathbb R^2: -\theta<\arg(x_1+\rmi x_2)<\theta\big\},\quad \theta\in(0,\pi),
\]
see Figure~\ref{fig-sector}. Recall that $T_\theta$ is defined as the self-adjoint operator in $L^2(\cS_\theta)$
acting as the Laplacian $u\mapsto-\Delta u$ with the Robin boundary condition $\partial_\nu u=u$ on $\partial\cS_\theta$, and the associated bilinear form $t_\theta$
is
\[
t_\theta(u,v)=\int_{\cS_\theta}\nabla u\cdot\nabla v\,\dd x-\int_{\partial\cS_\theta}uv\,\dd s,
\quad u,v\in H^1(\cS_\theta).
\]

For  $\theta\in(0,\frac{\pi}{2}]$ and $n\in\NN$ with $(2n-1)\theta\le \frac{\pi}{2}$
consider the following functions $\psi_{n,\theta}$ defined on $\cS_\theta$:
\begin{equation}
	\label{urs00}
\begin{aligned}
	\psi_{n,\theta}(x_1,x_2)&:=\sum_{m=0}^{n-1}A_{n,m,\theta}\varphi_{n,m,\theta}(x_1,x_2),\\
	\varphi_{n,m,\theta}(x_1,x_2)&:=\exp\Big(-\dfrac{1}{\sin\big((2n-1)\theta\big)}\big(x_1\cos(2m\theta)-x_2\sin(2 m \theta)\big)\Big)\\
		&\quad +\exp\Big(-\dfrac{1}{\sin\big((2n-1)\theta\big)}\big(x_1\cos(2m\theta)+x_2\sin(2 m \theta)\big)\Big),
\end{aligned}		
\end{equation}
with
\begin{align*}
	A_{n,0,\theta}&:=1,\\
	A_{n,m,\theta}&:=2(-1)^m\prod_{j=1}^m \dfrac{\tan\big((n-j)\theta\big)}{\tan\big((n+j-1)\theta\big)},\quad m\in\{1,\dots,n-1\},
\end{align*}
which are known as Ursell's solutions of the sloping beach problem~\cite{ursell}.
Namely, if one denotes
\begin{equation*}
	\begin{aligned}
\cS_\theta^+&:=\cS_\theta\cap \big\{(x_1,x_2): x_2>0\big\},\\
\Gamma_\theta^+&:=\partial\cS_\theta\cap \big\{(x_1,x_2): x_2>0\big\},
\end{aligned}
\end{equation*}
then
\begin{equation}
	\label{urseq}
	\left\{
\begin{aligned}
-\Delta \psi_{n,\theta}&=-\dfrac{1}{\sin^2\big((2n-1)\theta\big)}\psi_{n,\theta} \text{ in }\cS_\theta^+,\\
\partial_\nu \psi_{n,\theta}&=\psi_{n,\theta} \text{ on } \Gamma_\theta^+,\\
\partial_\nu \psi_{n,\theta}&=0 \text{ on } \partial\cS_\theta^+\setminus \Gamma_\theta^+,
\end{aligned}\right.
\end{equation}
see the original paper \cite{ursell} (which uses different coordinates)
or \cite{lyal} for a direct verification. 

	\begin{proposition}\label{prop-sector}
		For any $\theta\in(0,\pi)$ the following statements hold true:
		\begin{itemize}
			\item[(i)] $\spece T_{\theta}=[-1, +\infty)$  for any $\theta\in (0,\pi)$,
			\item[(ii)] the discrete spectrum of $T_{\theta}$ is at most finite.
	\end{itemize}
	More precisely, if one denotes
			\[
			\text{$\kappa(\theta):=\text{the number of discrete eigenvalues of }T_{\theta}$ (counted with multiplicities),}
			\]
			then
			\[
				\kappa(\theta)=\begin{cases}
		0,& \text{if }\theta\ge \frac{\pi}{2},\\[\medskipamount]
		\Big\lceil \dfrac{\pi}{4\theta}-\dfrac{1}{2}\Big\rceil, & \text{if }\theta<\frac{\pi}{2},
	\end{cases}
	\]
	and for  $\theta<\frac{\pi}{2}$ the discrete spectrum of $T_{\theta}$ consists of the simple eigenvalues
	\[
	\cE_n(\theta)\equiv E_n(T_\theta)=-\dfrac{1}{\sin^2 \big((2n-1)\theta\big)},\quad n=1,\ldots,\kappa(\theta),
	\]
	with eigenfunctions $\psi_{n,\theta}$ defined in \eqref{urs00}.
	\end{proposition}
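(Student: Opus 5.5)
The plan is to split the proof into four parts: the essential spectrum, the lower bound $\kappa(\theta)\ge\lceil\frac{\pi}{4\theta}-\frac12\rceil$ via Ursell's functions, the matching upper bound, and the concave case $\theta\ge\frac{\pi}{2}$. This proposition is essentially the content of \cite{kmz} combined with \cite{khp}, so a complete proof could be replaced by a citation; I sketch the self-contained route I would take.

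\textbf{(i) Essential spectrum.} I would use a Neumann bracketing/partition-of-unity argument. Far from the vertex, the sector looks locally like a half-plane with Robin condition, whose spectrum is $[-1,\infty)$ (the one-dimensional Robin operator on $(0,\infty)$ has the single eigenvalue $-1$ with eigenfunction $e^{-t}$). Singular Weyl sequences supported far along one edge, of the form $e^{-t}$ times a spreading cutoff times $e^{\rmi k s}$, give $[-1,\infty)\subset\spece T_\theta$.

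For the converse inclusion I would use an IMS localization with cutoffs at scale $R$: a ball around $O$ and two edge regions. The outer pieces satisfy the form bound $\ge -1-CR^{-2}$, while the ball piece has compact resolvent. Letting $R\to\infty$ gives $\inf\spece T_\theta\ge -1$.

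\textbf{(ii) Lower bound on the discrete spectrum.} For $\theta<\frac{\pi}{2}$ and $(2n-1)\theta<\frac{\pi}{2}$, one has $\sin((2n-1)\theta)<1$. Every exponent $\cos(2m\theta)$ with $m\le n-1$ is positive, since $2m\theta\le(2n-2)\theta<\frac{\pi}{2}$. Hence each exponential in \eqref{urs00} decays on $\cS_\theta$ and $\psi_{n,\theta}\in H^1(\cS_\theta)$.

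The function $\psi_{n,\theta}$ is even in $x_2$, so its reflection of \eqref{urseq} gives a genuine eigenfunction on the full sector: the Neumann condition on the symmetry axis glues smoothly. This yields the eigenvalue $-1/\sin^2((2n-1)\theta)<-1$. These eigenvalues are distinct for distinct $n$, since $\sin$ is increasing on $(0,\frac{\pi}{2})$. Counting the admissible $n$, namely $(2n-1)\theta<\frac{\pi}{2}$, i.e. $n<\frac{\pi}{4\theta}+\frac12$, gives exactly $\lceil\frac{\pi}{4\theta}-\frac12\rceil$ values, so $\kappa(\theta)$ is at least this number.

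\textbf{(ii) Upper bound: the main obstacle.} It remains to show that there are no other eigenvalues below $-1$, that each eigenvalue is simple, and that there are no odd eigenfunctions. This is where \cite{kmz} is needed, and it is the main difficulty. I would follow their Kontorovich--Lebedev / Mellin approach. Write a putative eigenfunction with eigenvalue $-\lambda^2$, $\lambda>1$, as a Laplace-type integral
\[
u(x)=\int e^{-\lambda(x_1\cos\phi+x_2\sin\phi)}\,g(\phi)\,\dd\phi,
\]
a superposition of plane waves over a complex contour. The Robin conditions on the two edges then turn into a functional difference equation for $g$ (Malyuzhinets type), with shift $2\theta$. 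Requiring $u\in L^2$ forces $g$ to reduce to finitely many residues, and the resulting recursion terminates exactly when $\lambda=1/\sin((2n-1)\theta)$. For odd functions the analogous recursion never terminates with decay, which excludes odd eigenfunctions.

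As a check on the counting, I would also bound $\kappa$ from above by an independent bracketing argument: Dirichlet--Neumann decoupling into the truncated sector and strips, in the spirit of \cite{khp}. I expect this to confirm finiteness of the discrete spectrum but not the sharp count, so the sharp count must come from the explicit analysis of \cite{kmz}.

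\textbf{The case $\theta\ge\frac{\pi}{2}$.} For a concave or flat sector I would write $\cS_\theta$ as a union of two overlapping half-planes meeting along their Robin edges. Using $|\nabla u|^2$ with a partition adapted to the bisector, and the Robin half-plane bound $-1$ on each part, one obtains $t_\theta(u,u)\ge-\|u\|^2$. A cleaner alternative is to split the sector by the bisector into two pieces of opening at least $\frac{\pi}{2}$ with Neumann condition on the cut. On each piece, the one-dimensional Robin estimate along the direction normal to the corresponding edge still gives $-1$ for quarter-plane and wider pieces, after checking that the normal lines from the edge sweep the piece. This yields $\kappa(\theta)=0$.
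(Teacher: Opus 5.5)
Your outline is sound. Like the paper, it ultimately rests on \cite{khp} for the essential spectrum and on \cite{kmz} for the sharp count. The paper's proof is purely by citation, whereas you make (i), the lower bound via Ursell's functions, and the concave case self-contained, which is a reasonable addition. Three points need adjusting.

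First, the paper does not ask \cite{kmz} to rule out odd eigenfunctions. It takes from \cite{khp} the parity splitting $T_\theta\simeq T^N_\theta\oplus T^D_\theta$ on $\cS^+_\theta$, together with the elementary bound $T^D_\theta\ge-1$. For that bound, a function vanishing on the symmetry axis extends by zero to the half-plane bounded by the line of the upper edge. This half-plane contains $\cS^+_\theta$ for every $\theta\in(0,\pi)$, and the one-dimensional Robin inequality in the normal direction then gives $-1$ (cf.\ the proof of Lemma~\ref{lemdd}). Thus \cite{kmz} is only needed for the Robin--Neumann half-sector, i.e.\ the sloping beach problem. Your proposed exclusion of odd eigenfunctions through a non-terminating Malyuzhinets recursion is an unproved claim; drop it in favour of this extension argument.

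Second, in the lower bound, $\cos(2m\theta)>0$ does not by itself give decay on $\cS_\theta$. For $x=\rho(\cos\phi,\sin\phi)$ with $|\phi|<\theta$, the two exponents are $-\rho\cos(\phi\pm2m\theta)/\sin((2n-1)\theta)$, so what you need is $(2m+1)\theta<\frac{\pi}{2}$. This does hold, since $(2m+1)\theta\le(2n-1)\theta<\frac{\pi}{2}$ for $m\le n-1$ (compare the computation in Remark~\ref{rem26}).

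Third, in your bisector argument for $\theta>\frac{\pi}{2}$, the normal rays from an edge sweep only a quarter-plane inside each half-sector, not the whole piece. The remaining region contributes a nonnegative gradient term and can simply be discarded, exactly as in the proof of Lemma~\ref{lem32a}. So the bound $t_\theta(u,u)\ge-\|u\|^2$ still follows, and combined with (i) it gives $\kappa(\theta)=0$.
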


	\begin{proof}
		The equality $\spece T_\theta=[-1,\infty)$ and the finiteness of the discrete spectrum are proved  in~\cite[App.~B]{khp}. It is also
		shown in \cite[Sec.~3.1]{khp} that the discrete spectrum of $T_\theta$ coincides with the discrete spectrum
		of the operator $T^N_\theta$ in $L^2(\cS^+_\theta)$
		defined as the Laplacian with the Robin boundary condition  $\partial_\nu u=u$ on $\Gamma^+_\theta$
		and Neumann boundary condition on the remaining horizontal part of the boundary. Each eigenfunction of $T^N_\theta$
		is the restriction of an eigenfunction of $T_\theta$ on $\cS^+_\theta$ (with the same eigenvalue) and, vice versa,
		each eigenfunction of $T_\theta$ is obtained from an eigenfunction of $T_\theta^N$ (with the same eigenvalue)
		by the $x_2$-parity. It was shown in \cite{kmz} that the operator $T^N_\theta$
		has exactly $\kappa(\theta)$ eigenvalues $\cE_n(\theta)$
		 with $n\in\{1,\dots,\kappa(\theta)\}$ in $(-\infty,-1)$ with associated eigenfunctions $\psi_{n,\theta}$.
		 As the functions $\psi_{n,\theta}$ are even in $x_2$, this concludes the proof.		
	\end{proof}

	\begin{remark}\label{rem26}
		Using the statements of Proposition~\ref{prop-sector}
		one easily sees that the piecewise constant eigenvalue counting function $\kappa$ has jumps only at the \emph{critical half-angles} $\theta_n$,
		\begin{equation}
			\label{theta-critical2}
		\theta_n:=\dfrac{\pi}{2(2n+1)},\quad n=0,1,2,\ldots,
		\end{equation}
and one has
		\[
		\kappa(\theta_n^-)=n+1,\quad \kappa(\theta_n^+)=\kappa(\theta_n)=n.		
		\]		
		
We further note that for each $n\in\NN_0$ the function
\begin{equation}
	\label{eq-psin}
\Psi_n:=\psi_{n+1,\theta_n}.
\end{equation}
is a bounded solution of the Robin problem
\begin{equation}
	\label{urs01}
-\Delta u=-u \text{ in }\cS_{\theta_n},
\quad 
\partial_\nu u=u \text{ on } \partial\cS_{\theta_n}
\end{equation}
at the edge of the essential spectrum of $T_{\theta_n}$. Namely, the conditions~\eqref{urs01} are satisfied due to~\eqref{urseq}
and $x_2$-parity. To see the boundedness we first use \eqref{urs00} to obtain
\begin{multline*}
\varphi_{n+1,m,\theta_n}(x_1,x_2)=\exp\Big(-\big(x_1\cos(2m\theta_n)-x_2\sin(2 m \theta_n)\big)\Big)\\
+\exp\Big(-\big(x_1\cos(2m\theta_n)+x_2\sin(2 m \theta_n)\big)\Big)
\end{multline*}
For $n=0$ one has $\theta_0=\frac{\pi}{2}$ and
\[
\Psi_0(x_1,x_2)=\psi_{1,\frac{\pi}{2}}(x_1,x_2)=\varphi_{1,0,\frac{\pi}{2}}(x_1,x_2)=2e^{-x_1},
\]
so $\Psi_0$ is clearly bounded on the half-space $\cS_\frac{\pi}{2}$. For $n\ge 1$ we note first that for any $m\in\{0,\dots,n\}$
one has
\[
(2m+1)\theta_n=\dfrac{2m+1}{2n+1}\cdot\dfrac{\pi}{2}\in\Big[0,\frac{\pi}{2}\Big],\qquad
\cos \big((2m+1)\theta_n\big)\ge 0.
\]
For $(x_1,x_2)\in\cS_{\theta_n}$	one has $x_1>0$ and $|x_2|< x_1\tan\theta_n$, therefore,
\begin{align*}
	-x_1\cos(2m\theta_n)+|x_2|&\sin(2 m \theta_n)\le -x_1\cos(2m\theta_n)+x_1\tan(\theta_n)\sin(2 m \theta_n)\\
	&=-\dfrac{x_1}{\cos (\theta_n)}\big( \cos (\theta_n) \cos(2m\theta_n)-\sin (\theta_n) \sin(2m\theta_n)\big)\\
	&=-\dfrac{x_1}{\cos (\theta_n)} \cos \big((2m+1)\theta_n\big)\le 0,
\end{align*}
	which shows the boundedness of all $\varphi_{n+1,m,\theta_n}$ with $m\in\{0,\dots,n\}$ and yields the boundedness of $\Psi_n$ on $\cS_{\theta_n}$.
	
	\begin{figure}\centering
		\begin{minipage}{0.35\textwidth}\centering
				\scalebox{0.85}{\begin{tikzpicture}[x=0.75pt,y=0.75pt,yscale=-1,xscale=1]
				
				\draw  (97.5,170.5) -- (260.38,170.5)(110.63,98) -- (110.63,249) (253.38,165.5) -- (260.38,170.5) -- (253.38,175.5) (105.63,105) -- (110.63,98) -- (115.63,105)  ;
				\draw  [draw opacity=0][fill={rgb, 255:red, 155; green, 155; blue, 155 }  ,fill opacity=0.55 ] (220.38,122.25) -- (220.63,218.5) -- (110.4,171) -- cycle ;
				\draw  [draw opacity=0] (168.87,145.94) .. controls (172.17,153.63) and (174,162.1) .. (174,171) .. controls (174,179.69) and (172.26,187.98) .. (169.1,195.53) -- (110.4,171) -- cycle ; \draw  [color={rgb, 255:red, 74; green, 74; blue, 74 }  ,draw opacity=1 ] (168.87,145.94) .. controls (172.17,153.63) and (174,162.1) .. (174,171) .. controls (174,179.69) and (172.26,187.98) .. (169.1,195.53) ;  
				\draw [color={rgb, 255:red, 128; green, 128; blue, 128 }  ,draw opacity=1 ]   (111.5,171) -- (227.6,221.4) ;
				\draw  [color={rgb, 255:red, 74; green, 74; blue, 74 }  ,draw opacity=1 ][line width=0.75]  (111.5,171) -- (133.17,160.61) -- (143.56,182.28) -- (121.89,192.67) -- cycle ;
				\draw [line width=2.25]    (111.5,171) -- (149.75,244.25) ;
				\draw [shift={(151.6,247.8)}, rotate = 242.43] [color={rgb, 255:red, 0; green, 0; blue, 0 }  ][line width=2.25]    (17.49,-5.26) .. controls (11.12,-2.23) and (5.29,-0.48) .. (0,0) .. controls (5.29,0.48) and (11.12,2.23) .. (17.49,5.26)   ;
				\draw [line width=2.25]    (111.5,171) -- (239.54,114.61) ;
				\draw [shift={(243.2,113)}, rotate = 156.23] [color={rgb, 255:red, 0; green, 0; blue, 0 }  ][line width=2.25]    (17.49,-5.26) .. controls (11.12,-2.23) and (5.29,-0.48) .. (0,0) .. controls (5.29,0.48) and (11.12,2.23) .. (17.49,5.26)   ;
				
				\draw (233.15,149) node [anchor=north west][inner sep=0.75pt]    {$x_{1}$};
				\draw (118.5,95.05) node [anchor=north west][inner sep=0.75pt]    {$x_{2}$};
				\draw (177.2,146.2) node [anchor=north west][inner sep=0.75pt]    {$\theta $};
				\draw (177.2,175.8) node [anchor=north west][inner sep=0.75pt]    {$\theta $};
				\draw (202.4,104) node [anchor=north west][inner sep=0.75pt]    {$y_{1}$};
				\draw (156.65,232) node [anchor=north west][inner sep=0.75pt]    {$y_{2}$};

			\end{tikzpicture}}
			
		\end{minipage}\begin{minipage}{0.6\textwidth}
		\caption{The coordinates $(y_1,y_2)$.}\label{fig-yy}
		\end{minipage}
	\end{figure}

	For subsequent use it will be helpful to have an expression of some of the Ursell solutions in the orthogonal coordinates
	\[
	\begin{pmatrix}
		y_1\\y_2
	\end{pmatrix}=\begin{pmatrix}
	\cos\theta&\sin\theta\\
	\sin\theta & -\cos\theta
	\end{pmatrix}
	\begin{pmatrix}
	x_1\\x_2
	\end{pmatrix},\quad \text{ i.e. }
	\begin{pmatrix}
		x_1\\x_2
	\end{pmatrix}=\begin{pmatrix}
		\cos\theta&\sin\theta\\
		\sin\theta & -\cos\theta
	\end{pmatrix}
	\begin{pmatrix}
		y_1\\y_2
	\end{pmatrix},
	\]
	in particular, the value $|y_2|$ is the distance between $(x_1,x_2)$ and the boundary line $x_1=x_2\cot\theta$. See Fig.~\ref{fig-yy} for an illustration. We would like to have a suitable representation for
	\[
	\Tilde\Psi_n(y_1,y_2):=\Psi_n(x_1,x_2),\quad \theta:=\theta_n.
	\]

	For $n=0$ we simply have $(y_1,y_2)=(x_2,x_1)$, hence,
	\[
	\Tilde \Psi_0(y_1,y_2)=2e^{-y_2}.
	\]
	
	From now on assume $\theta=\theta_n$ with $n\ge 1$	 and denote
	\begin{align*}
	\Tilde\varphi_{n+1,m,\theta_n}(y_1,y_2)&:=\varphi_{n+1,m,\theta_n}(x_1,x_2)\\
	&\equiv \varphi_{n+1,m,\theta_n}(y_1\cos\theta_n+y_2\sin\theta_n,y_1\sin\theta_n-y_2\cos\theta_n).
	\end{align*}
	It holds that
	\begin{align*}
	x_1\cos(2m\theta_n)\pm x_2\sin(2 m \theta_n)&=(y_1\cos\theta_n+y_2\sin\theta_n)\cos(2m\theta_n)\\
	&\qquad \pm (y_1\sin\theta_n-y_2\cos\theta_n)\sin(2 m \theta_n)\\
	&=y_1\big(\cos(\theta_n)\cos(2m\theta_n)\pm \sin(\theta_n)\sin(2m\theta_n)\big)\\
	&\quad +y_2\big( \sin(\theta_n)\cos(2m\theta_n)\mp \cos(\theta_n)\sin(2m\theta_n)\big)\\
	&=y_1 \cos\big((2m\mp 1)\theta_n\big)\mp y_2 \sin\big((2m\mp 1)\theta_n\big).
	\end{align*}
	In particular,
	\begin{multline*}
		\Tilde\varphi_{n+1,n,\theta_n}(y_1,y_2)=\exp(-y_2)+\exp\Big(-y_1 \cos\big((2n-1)\theta_n\big)+y_2 \sin\big((2n-1)\theta_n\big)\Big),
	\end{multline*}
	and for $\Tilde\Psi_n(y_1,y_2):=\Psi_n(x_1,x_2)$ we have
	\begin{align}
	\Tilde\Psi_n(y_1,y_2)&=a_ne^{-y_2}+\Tilde\Phi_n(y_1,y_2),\label{urssely}\\
	\Tilde \Phi_n(y_1,y_2)&=\sum_{m=0}^{n-1} \Big\{ b_{n,m} e^{-(y_1 \cos\alpha_{m,n}-y_2\sin\alpha_{m,n})}
	+c_{n,m} e^{-(y_1 \cos\alpha_{m,n}+y_2\sin\alpha_{m,n})}\Big\},\nonumber\\
	\alpha_{m,n}&:=(2m+1)\theta_n,\nonumber
	\end{align}
	with some $a_n,b_{n,m},c_{n,m}\in\RR$ ($a_n\ne 0$) whose precise values are not important for the subsequent analysis. We note that for $(y_1,y_2)$ corresponding to the upper half of the sector, i.e. for
	\[
	0< y_2< y_1 \tan \theta_n,
	\]
	and any $m\in\{0,\dots,n-1\}$ we have $-y_2\ge -y_1\tan\theta_n$,
	\begin{align*}
		y_1 \cos\alpha_{m,n}+y_2\sin\alpha_{m,n}
		&\ge y_1 \cos\alpha_{m,n}-y_2\sin\alpha_{m,n}\\
		&\ge y_1 \cos(\alpha_{m,n})-y_1\tan(\theta_n)\sin(\alpha_{m,n})\\
		&=\dfrac{y_1}{\cos\theta_n}\Big( \cos(\theta_n)\cos(\alpha_{m,n})-\sin(\theta_n)\sin(\alpha_{m,n})\Big)\\
		&=\dfrac{y_1}{\cos\theta_n}\cos(\theta_n+\alpha_{m,n})\equiv \dfrac{y_1}{\cos\theta_n}\cos\big((2m+2)\theta_n\big)\\
		&=d_{m,n} y_1,\quad d_{m,n}:=\dfrac{\cos\big(\frac{m+1}{2n+1}\pi\big)}{\cos\theta_n}>0.
	\end{align*}
	It follows that for suitable $d_n,d'_n>0$ in \eqref{urssely} one has
	\begin{equation}
		\label{urssel-decay}
		\big|\Tilde\Phi_n(y_1,y_2)\big|+\big|\nabla \Tilde\Phi_n(y_1,y_2)\big|\le d'_n e^{-d_n y_1}, \quad 0< y_2< y_1 \tan \theta_n.
	\end{equation}	
Note that the representation \eqref{urssely} and the estimate \eqref{urssel-decay} show $\Psi_n\notin L^2(\cS_{\theta_n})$.
\end{remark}
	
	\subsection{Elliptic estimates}\label{ssec22}
	
	The following assertion is Theorem~9.11 in \cite{gt} for the specific choice $L=-\Delta$, $n=2$ and $p=2$:
	\begin{proposition}[Local Calder\'on-Zygmund estimate]\label{calzyg}
		Let $\Omega\subset\RR^2$ be an open set and $\Omega_0\subset\Omega$ be a bounded open subset with $\overline{\Omega_0}\subset \Omega$. Then there is $C>0$ such that for any
		$u\in H^2_\loc(\Omega)\cap L^2(\Omega)$ with $\Delta u\in L^2(\Omega)$ one has
		\[
		\|u\|_{H^2(\Omega_0)}\le C\big(\|u\|_{L^2(\Omega)}+\|\Delta u\|_{L^2(\Omega)}\big).
		\]

	\end{proposition}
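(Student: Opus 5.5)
The plan is to prove this directly with cutoff functions and the Fourier transform. It is the special case of \cite[Thm.~9.11]{gt} with $L=-\Delta$, $n=2$, $p=2$, and for $p=2$ no singular-integral theory is needed.

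First I would fix the cutoffs. Since $\overline{\Omega_0}$ is compact in $\Omega$, I choose open sets $\Omega_0\Subset\Omega_1\Subset\Omega$ and functions $\chi,\eta\in C_c^\infty(\Omega)$ with $0\le\chi,\eta\le 1$, $\chi=1$ on $\Omega_0$, $\supp\chi\subset\Omega_1$, and $\eta=1$ on $\Omega_1$. All constants below depend only on $\|\chi\|_{C^2}$ and $\|\eta\|_{C^1}$, hence only on $\Omega_0$ and $\Omega$.

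Step 1 is a Caccioppoli-type bound for $\nabla u$. Because $u\in H^2_\loc(\Omega)$ and $\eta^2 u$ has compact support in $\Omega$, Green's formula holds without boundary terms:
\[
\int_\Omega \eta^2|\nabla u|^2\,\dd x=-\int_\Omega \eta^2 u\,\Delta u\,\dd x-2\int_\Omega \eta u\,\nabla\eta\cdot\nabla u\,\dd x .
\]
I estimate the last term by $\frac12\int\eta^2|\nabla u|^2+2\|\nabla\eta\|_\infty^2\|u\|^2_{L^2(\Omega)}$ and absorb it into the left-hand side. This gives
\[
\|\nabla u\|_{L^2(\Omega_1)}^2\le C\big(\|u\|^2_{L^2(\Omega)}+\|\Delta u\|^2_{L^2(\Omega)}\big).
\]

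Step 2 is the $H^2$ bound for $v:=\chi u$. Extended by zero, $v$ lies in $H^2(\RR^2)$, since it is $H^2_\loc$ with compact support inside $\Omega$. We have
\[
\Delta v=\chi\Delta u+2\nabla\chi\cdot\nabla u+u\Delta\chi ,
\]
so by Step 1, $\|\Delta v\|_{L^2(\RR^2)}\le C(\|u\|_{L^2(\Omega)}+\|\Delta u\|_{L^2(\Omega)})$. By Plancherel, using $\sum_{i,j}|\xi_i\xi_j|^2=|\xi|^4$, we get
\[
\sum_{i,j}\|\partial_i\partial_j v\|^2_{L^2(\RR^2)}=\|\Delta v\|^2_{L^2(\RR^2)} .
\]
Together with $\|v\|_{L^2}\le\|u\|_{L^2(\Omega)}$ and $\|\nabla v\|_{L^2}\le C(\|u\|_{L^2(\Omega)}+\|\nabla u\|_{L^2(\Omega_1)})$, this bounds $\|v\|_{H^2(\RR^2)}$. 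Since $v=u$ on $\Omega_0$, we have $\|u\|_{H^2(\Omega_0)}\le\|v\|_{H^2(\RR^2)}$, which is the claim.

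There is no real obstacle. The only points needing care are the justifications: the integration by parts in Step 1, which holds because $u\in H^2_\loc$ and the cutoff is compactly supported, and the claim that $\chi u\in H^2(\RR^2)$ so that the Fourier identity applies. Both are standard.
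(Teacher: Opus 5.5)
Your argument is correct, but it differs from the paper's: the paper gives no proof and simply quotes \cite[Thm.~9.11]{gt} with $L=-\Delta$, $n=2$, $p=2$. That theorem is the general interior $W^{2,p}$ estimate for $1<p<\infty$ and non-divergence operators with continuous leading coefficients. Its proof rests on Calder\'on--Zygmund $L^p$ bounds for the second derivatives of the Newtonian potential, combined with a freezing-of-coefficients and interpolation argument.

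You use the two simplifications available here, constant coefficients and $p=2$. After localizing with $\chi$ and controlling $\nabla u$ on $\Omega_1$ by your Caccioppoli step, Plancherel gives the exact identity $\sum_{i,j}\|\partial_i\partial_j v\|^2_{L^2(\RR^2)}=\|\Delta v\|^2_{L^2(\RR^2)}$ for $v\in H^2(\RR^2)$. This identity replaces the singular-integral theory entirely.

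The citation buys brevity and generality. The paper does not need that generality, since it only applies the estimate to eigenfunctions of the Laplacian in $L^2$. Your route gives a self-contained, elementary proof that works verbatim in any dimension. Its constant is explicit in $\|\chi\|_{C^2}$ and $\|\eta\|_{C^1}$, so it depends only on a lower bound for $\dist(\Omega_0,\RR^2\setminus\Omega)$.
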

	
	The next statement is Theorem~3.14(ii) in \cite{nittka} in which we take
	\[
	L=-\Delta, \quad N=2, \quad q=1, \quad \eps=1, \quad f_j=0, \quad g=0
	\]
	and use the trivial inequality $\|u\|_{L^\infty(\Omega)}\le \|u\|_{C^{0,\gamma}(\Omega)}$:
	
	\begin{proposition}[$L^2$--$L^\infty$ estimate]	\label{prop-nittka}
		Let $\Omega\subset\RR^2$ be a bounded Lipschitz domain with outer unit normal $\nu$.
		Let $\beta\in L^\infty(\partial\Omega)$, then there exists a constant $C>0$ with the following property: for any $f\in L^2(\Omega)$ and any weak solution $u\in H^1(\Omega)$ of
		\[
		-\Delta u=f \text{ in }\Omega,\quad \partial_\nu u+\beta u=0 \text{ on }\partial\Omega
		\]
		it holds that
		\[
		\|u\|_{L^\infty(\Omega)}\le C\big(\|u\|_{L^2(\Omega)}+\|f\|_{L^2(\Omega)}\big).
		\]
	\end{proposition}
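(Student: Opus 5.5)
The plan is to derive the estimate as a specialization of Theorem~3.14(ii) in \cite{nittka}, as announced before the statement, and to keep a self-contained De Giorgi argument in reserve in case the hypotheses there are not matched verbatim. First I will check the hypotheses with the choices $L=-\Delta$, $N=2$, $g=0$, $f_j=0$ and the Robin coefficient $\beta\in L^\infty(\partial\Omega)$, which is allowed on a bounded Lipschitz domain. The weak formulation there reads
\[
\int_\Omega \nabla u\cdot\nabla\varphi\,\dd x+\int_{\partial\Omega}\beta u\varphi\,\dd s=\int_\Omega f\varphi\,\dd x,\qquad \varphi\in H^1(\Omega),
\]
and this coincides with ours. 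The right-hand side must lie in $L^{q}$ with $q>N/2=1$; since $f\in L^2(\Omega)$ and $\Omega$ is bounded, $q=2$ (or any $q\in(1,2]$) is admissible and $\|f\|_{L^q}\le C\|f\|_{L^2}$. The cited theorem then gives a H\"older bound $\|u\|_{C^{0,\gamma}(\overline\Omega)}\le C(\|u\|_{L^2}+\|f\|_{L^2})$, and the trivial inequality $\|u\|_{L^\infty}\le\|u\|_{C^{0,\gamma}}$ finishes the argument. The constant depends only on $\Omega$ and $\|\beta\|_\infty$.

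If a direct argument is preferred, I would run a De Giorgi iteration. For $k\ge0$ I test with $v_k:=(u-k)_+\in H^1(\Omega)$; the case of $u^-$ follows by applying the same argument to $-u$. This gives
\[
\|\nabla v_k\|^2_{L^2}\le \|\beta\|_\infty\int_{\partial\Omega}v_k^2\,\dd s+\|\beta\|_\infty k\int_{\partial\Omega}v_k\,\dd s+\int_{A_k} |f|\,v_k\,\dd x,
\qquad A_k:=\{u>k\}.
\]
The boundary terms are handled with the trace inequality on Lipschitz domains, $\int_{\partial\Omega}w^2\,\dd s\le\eps\|\nabla w\|^2+C_\eps\|w\|^2$, so that the gradient part can be absorbed into the left-hand side. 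Next I use the two-dimensional Sobolev embedding $H^1(\Omega)\hookrightarrow L^p(\Omega)$ for all $p<\infty$, together with the trace embedding into $L^p(\partial\Omega)$. These allow the remaining terms to be bounded by powers of $|A_k|$ and of $|A_k\cap\partial\Omega|_{\text{surface}}$ with exponents strictly greater than~$1$.

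Along the levels $k_j=K(2-2^{-j})$ this yields a nonlinear recursion $Y_{j+1}\le C b^j Y_j^{1+\delta}$ for $Y_j:=\|v_{k_j}\|^2_{L^2(\Omega)}+\|v_{k_j}\|^2_{L^2(\partial\Omega)}$. Choosing $K$ proportional to $\|u\|_{L^2}+\|f\|_{L^2}$ makes $Y_0$ small, so $Y_j\to0$ and therefore $u\le 2K$. The main obstacle I expect is the boundary contribution: one has to control $\|u\|_{L^2(\partial\Omega)}$ in the initial step, and the term $k\int v_k$ coming from the Robin coefficient, which has no sign. I would deal with both by first obtaining an $H^1$ bound from the weak formulation and the trace inequality, $\|u\|_{H^1}\le C(\|u\|_{L^2}+\|f\|_{L^2})$. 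Since this is exactly what \cite{nittka} does in generality, I would ultimately rely on the citation and only verify the parameter matching above.
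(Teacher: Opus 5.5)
Your proposal is correct and is the paper's argument: the proposition is Theorem~3.14(ii) of \cite{nittka} specialized to $L=-\Delta$, $N=2$, $f_j=0$, $g=0$, followed by the trivial bound $\|u\|_{L^\infty(\Omega)}\le\|u\|_{C^{0,\gamma}(\Omega)}$, so your De Giorgi fallback is not needed. The only difference is cosmetic: the paper records the parameter choice in Nittka's notation as $q=1$, $\eps=1$, so when you check hypotheses, match the exponents to the theorem's own labels rather than to your guessed ``$q>N/2$''.
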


\section{Eigenvalue counting function and threshold resonances}\label{sec3}
	
The aim of this section is to discuss the existence of threshold resonances for the Robin Laplacians in the infinite sectors $\cS_\theta$ (see Subsection~\ref{ssec-sectors}).

For $\theta\in(0,\pi)$ denote by $\cF_\theta$ the vector space of the \emph{bounded} weak solutions of the boundary value problem
\begin{equation*}
	-\Delta F=-F \text{ in }\cS_\theta,\quad \partial_\nu F=F \text{ on }\partial\cS_\theta
\end{equation*}
(understood as in Definition~\ref{defin-threshold}). Our main result is:

\begin{theorem}\label{thm-sec3}
	For any $\theta\in (0,\pi)$ one has
	\[
	\dim \cF_\theta=\begin{cases}
		1, & \text{if $\theta$ is one of the critical half-angles }\theta_n \text{ in }\eqref{theta-critical2},\\
		0, & \text{otherwise.}
	\end{cases}
	\]
	In particular, the Robin Laplacian $T_\theta$ has a threshold resonance if and only if $\theta$ is a critical half-angle at which the eigenvalue counting function $\kappa$ has a jump.
\end{theorem}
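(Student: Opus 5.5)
The argument splits into the existence half, $\dim\cF_{\theta_n}\ge 1$ at each critical half-angle, and the upper-bound half, $\dim\cF_\theta\le 1$ with equality only at the $\theta_n$. The existence half is already done. By Remark~\ref{rem26}, $\Psi_n=\psi_{n+1,\theta_n}$ is a bounded, nontrivial solution of \eqref{urs01}, and $\Psi_0=2e^{-x_1}$. So $\dim\cF_{\theta_n}\ge1$. For weak solvability in the sense of Definition~\ref{defin-threshold}, the smoothness of $\Psi_n$ and the exponential formulas let us integrate by parts against test functions with bounded support.

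For the upper bound, the main tool is the principle the introduction calls standard: a nontrivial bounded solution supplies an extra test function. Let $0\ne F\in\cF_\theta$.

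1. \emph{Regularity and asymptotics.} Apply Proposition~\ref{calzyg} and Proposition~\ref{prop-nittka} on unit-size pieces of $\cS_\theta$ (away from and near the boundary, and near the vertex). Since $F$ is bounded, this gives uniform local $H^2$ and gradient bounds, so $|\nabla F|$ is bounded as well. Far from the vertex, near each boundary ray, separate variables in the coordinates $(y_1,y_2)$. In the transversal direction the half-line Robin operator $-\partial_{y_2}^2$ with $\partial_\nu u=u$ has a single eigenvalue $-1$, with eigenfunction $e^{-y_2}$, and continuous spectrum $[0,\infty)$. At spectral level $-1$ the transversal modes therefore split into the threshold mode, whose longitudinal equation is $-\partial_{y_1}^2 g=0$, and modes with exponential behaviour. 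In the interior, away from both rays, the equation $\Delta F=F$ together with boundedness forces exponential decay in the distance to $\partial\cS_\theta$. The expected conclusion is that $F=(a_\pm+b_\pm y_1)e^{-y_2}+(\text{exponentially decaying})$ along each ray. Boundedness forces $b_\pm=0$, so
\[
F=a_\pm e^{-y_2}+O(e^{-dy_1}).
\]
Making this decomposition rigorous with only $L^\infty$ control is the technical core. I would project $F$ onto $e^{-y_2}$ in slabs $\{y_1=\text{const}\}$ and use a Phragmén--Lindelöf or maximum-principle-type argument for the complementary part.

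2. \emph{$\dim\le1$ and $a_\pm\ne0$.} If $a_+=a_-=0$, then $F\in L^2$ is an eigenfunction at $-1$. I would exclude this, either via the explicit description of the spectrum in Proposition~\ref{prop-sector} and \cite{kmz}, or via a Rellich-type argument, since $-1$ is the threshold. Hence the linear map $F\mapsto(a_+,a_-)$ is injective. Splitting $F$ into its even and odd parts in $x_2$, each part has $a_+=\pm a_-$. So $\dim\cF_\theta\le 2$, and I must still rule out one parity.

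3. \emph{Test-function argument and locating $\theta$.} Take a cutoff $\chi_R(x)=\chi(|x|/R)$ and test the form $t_\theta$ on $\operatorname{span}\{\psi_{1,\theta},\dots,\psi_{\kappa(\theta),\theta},\chi_RF\}$. Because $F$ is only bounded, $\chi_RF$ has norm of order $R$, while the localization error $\int|\nabla\chi_R|^2F^2$ stays $O(1)$. So the Rayleigh quotient is $-1+O(R^{-2})$, but this is not below $-1$. To gain, I would perturb the angle. For $\theta'$ slightly smaller than $\theta$, dilate in the angular variable to transport $\chi_RF$ into $\cS_{\theta'}$. The boundary term, combined with the asymptotics $a_\pm e^{-y_2}$, then produces a gain of order $(\theta-\theta')R$ against the loss $O(1/R)$. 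This yields $\kappa(\theta')\ge\kappa(\theta)+1$ for all $\theta'<\theta$ close to $\theta$, so $\theta$ is a jump point of $\kappa$. By Remark~\ref{rem26} it is some $\theta_n$, and $\theta\ge\pi/2$ is excluded unless $\theta=\theta_0$. At $\theta_n$, the same perturbation applied to two independent solutions would create two new eigenvalues, contradicting $\kappa(\theta_n^-)-\kappa(\theta_n)=1$. Hence $\dim\cF_{\theta_n}=1$.

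The main obstacle is Step 1, the asymptotic decomposition of a merely bounded solution. It underlies both the injectivity of $F\mapsto(a_+,a_-)$ and the sign and size of the gain in Step 3.
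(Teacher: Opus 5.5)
Your Step~3 has the right architecture. It is essentially Lemma~\ref{lem32}: use $F$ as an extra test function, perturb the angle, and apply min-max together with $\kappa(\theta_n^-)-\kappa(\theta_n)=1$. However, the mechanism you give for the gain does not work.

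Take $\sigma:=\theta/\theta'>1$ and the $L^2$-unitary angular dilation $u'(r,\phi):=\sqrt{\sigma}\,u(r,\sigma\phi)$. One gets exactly
\[
t_{\theta'}(u',u')=t_\theta(u,u)+(\sigma^2-1)\iint_{\cS_\theta}\frac{|\partial_\phi u|^2}{r^2}\,\dd x-(\sigma-1)\int_{\partial\cS_\theta}u^2\,\dd s,
\]
and you only counted the last term. For $u=\chi_RF$ with $F\approx a e^{-y_2}$ along a ray, the middle term is also $\approx(\sigma-1)a^2R$. So the $O(R)$ contributions cancel at first order in $\sigma-1$. This reflects the stationarity of the transversal Robin quotient under $y\mapsto\sigma y$: for $f=\sqrt2e^{-y}$ one has $\sigma^2\|f'\|^2-\sigma f(0)^2=-1+(\sigma-1)^2$.

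What remains is a first-order term of size $O(1)$ and undetermined sign, coming from the vertex region. There is also a second-order \emph{loss} $\approx\tfrac12(\sigma-1)^2a^2R$, which dominates as $R\to\infty$ for fixed $\theta'$. So there is no gain of order $(\theta-\theta')R$. Hence neither "$\theta$ is a jump point" nor "$\dim\cF_{\theta_n}\le 1$" follows.

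Several other pieces are also missing:
\begin{itemize}
\item Step~1 is an unproven, and hard, asymptotic expansion of merely bounded solutions.
\item Step~2 needs that $-1$ is not an $L^2$-eigenvalue of $T_\theta$, which Proposition~\ref{prop-sector} does not contain.
\item The second parity is left open.
\end{itemize}

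The paper avoids all asymptotics with two ideas.
\begin{itemize}
\item \emph{Logarithmic cutoff.} Take $\chi_R$ equal to $1$ on $B_R(0)$, to $0$ outside $B_{R^2}(0)$, and logarithmic in $\sqrt{x^2+y^2}$ in between. By \eqref{chinabla} it satisfies $\iint_{U^\theta}|\nabla\chi_R|^2=\theta/\log R$. Then \eqref{uuff} gives $(n^\theta+I)(\chi_RF,\chi_RF)\le\|F\|_\infty^2\,\theta/\log R\to0$, using only $F\in L^\infty$.
\item \emph{Dilation along the Robin side.} After the parity reduction to $U^\theta$ with the Robin side on the $x$-axis, dilate by $(x,y)\mapsto(\lambda x,y)$. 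By \eqref{nti1} this changes the form by exactly $-\delta\iint(\partial_x u)^2\le 0$ and leaves the transversal profile $e^{-y}$ untouched.
\end{itemize}
One has $\partial_xF\not\equiv0$, since otherwise $F=ce^{-y}$, which violates the Neumann condition on the oblique side when $\theta<\frac{\pi}{2}$. So this gain is of order one on a fixed compact set, and it beats the $o(1)$ cutoff error. This yields $\dim\cF^N_\theta\le\kappa(\theta^-)-\kappa(\theta)$ directly.

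The odd part is eliminated for every $\theta$ by Lemma~\ref{lemdd}. Extending by zero to the half-plane and completing the square turns \eqref{uuff} into $\iint\big(|\partial_x(\chi_R\Tilde F)|^2+|\partial_y(\chi_R\Tilde F)+\chi_R\Tilde F|^2\big)\to 0$. This forces $\Tilde F=ce^{-y}$, hence $\Tilde F=0$. The same device handles $\theta\ge\frac{\pi}{2}$ in Lemma~\ref{lem32a}. On this route, the absence of an $L^2$-eigenvalue at $-1$ is a consequence rather than an assumption.
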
	
This establishes the equivalence (B)$\iff$(C) in Theorem~\ref{thm-main}.
By combining Theorem~\ref{thm-sec3} with the computations in Remark~\ref{rem26} we also obtain:
\begin{corollary}
	For each critical half-angle $\theta_n$ in \eqref{theta-critical2} one has
	\[
	\cF_{\theta_n}=\RR \Psi_n,
	\]
	where $\Psi_n$ is Ursell's solution given by \eqref{eq-psin}.
\end{corollary}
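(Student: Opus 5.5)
The corollary follows by combining Theorem~\ref{thm-sec3} with the observations of Remark~\ref{rem26}. The plan is to show that $\Psi_n$ is a nonzero element of $\cF_{\theta_n}$. Since Theorem~\ref{thm-sec3} gives $\dim\cF_{\theta_n}=1$, this forces $\cF_{\theta_n}=\RR\Psi_n$.

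First, I check that $\Psi_n=\psi_{n+1,\theta_n}$ is a weak solution of \eqref{urs01} in the sense of Definition~\ref{defin-threshold}.
\begin{itemize}
\item $\Psi_n$ is a finite linear combination of exponentials of linear functions. Hence it is $C^\infty$ on all of $\RR^2$, and in particular lies in $H^1(U)$ for every bounded $U\subset\cS_{\theta_n}$.
\item By \eqref{urseq} with $n+1$ in place of $n$ and $\sin\big((2n+1)\theta_n\big)=\sin\frac{\pi}{2}=1$, the function $\Psi_n$ solves $-\Delta\Psi_n=-\Psi_n$ classically in $\cS^+_{\theta_n}$. It also satisfies $\partial_\nu\Psi_n=\Psi_n$ on $\Gamma^+_{\theta_n}$.
\item Each $\varphi_{n+1,m,\theta_n}$ is even in $x_2$, so $\Psi_n$ is even in $x_2$. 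Being smooth across the axis, it therefore satisfies the equation in the whole sector and the Robin condition on both boundary rays.
\end{itemize}
To obtain the weak formulation, I take $\varphi\in H^1(\cS_{\theta_n})$ with bounded support and choose a bounded Lipschitz truncation $U$ of the sector (for example $\cS_{\theta_n}\cap B_R(0)$ with $R$ large) that contains $\supp\varphi$. I then apply Green's formula on $U$. Because $\Psi_n$ is smooth up to $\overline U$, including the vertex, this is legitimate. The boundary terms on $\partial U\cap\cS_{\theta_n}$ vanish because $\varphi$ vanishes there, and on $\partial\cS_{\theta_n}$ the Robin condition turns them into $-\int_{\partial\cS_{\theta_n}}\Psi_n\varphi\,\dd s$. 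This gives exactly the identity in Definition~\ref{defin-threshold}.

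Second, I need boundedness and non-triviality. Boundedness of $\Psi_n$ on $\cS_{\theta_n}$ was already shown in Remark~\ref{rem26}. For non-triviality:
\begin{itemize}
\item For $n=0$ this is immediate, since $\Psi_0=2e^{-x_1}$.
\item For $n\ge1$, I use the representation \eqref{urssely} with $a_n\neq0$ together with the decay estimate \eqref{urssel-decay}. Along the upper part of the sector with $y_2$ fixed and $y_1\to\infty$, one has $\Tilde\Psi_n(y_1,y_2)\to a_ne^{-y_2}\ne0$. Hence $\Psi_n\not\equiv0$; this is the same observation that gave $\Psi_n\notin L^2$.
\end{itemize}
Thus $0\ne\Psi_n\in\cF_{\theta_n}$, so $\RR\Psi_n\subset\cF_{\theta_n}$, and the dimension count from Theorem~\ref{thm-sec3} yields equality.

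The only step requiring any care is the justification of the weak formulation near the corner of the sector. Since $\Psi_n$ extends smoothly to a neighbourhood of the vertex, I expect this to cause no real difficulty. All the substantive content sits in Theorem~\ref{thm-sec3} itself.
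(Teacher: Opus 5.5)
Your proposal is correct and follows the paper's own route: Theorem~\ref{thm-sec3} gives $\dim\cF_{\theta_n}=1$, and Remark~\ref{rem26} supplies $\Psi_n$ as a nonzero bounded solution, so $\cF_{\theta_n}=\RR\Psi_n$. Your Green's-formula check of the weak formulation, and your non-triviality argument via $a_n\neq0$ in \eqref{urssely} (or $\Psi_0=2e^{-x_1}$ when $n=0$), make explicit what the paper leaves implicit.
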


\subsection{Reduction by parity and cut-off functions}

As a preparation for the proof let us make a couple of reductions and choose suitable coordinates.
Consider the upper half $\cS^+_\theta$ of the sector $\cS_\theta$ and the respective part $\Gamma^+_\theta$
of the Robin boundary,
\begin{equation*}
	\begin{aligned}
		\cS_\theta^+&:=\cS_\theta\cap \big\{(x_1,x_2): x_2>0\big\},\\
		\Gamma_\theta^+&:=\partial\cS_\theta\cap \big\{(x_1,x_2): x_2>0\big\},\\
		\Gamma&:=\partial\cS_\theta^+\setminus\Gamma^+_\theta\equiv\{(x_1,0):\ x_1\ge 0\},
	\end{aligned}
\end{equation*}
see Fig.~\ref{fig-sec2}.

	\begin{figure}\centering
	\begin{minipage}{0.4\textwidth}\centering
		\scalebox{0.9}{\begin{tikzpicture}[x=0.75pt,y=0.75pt,yscale=-1,xscale=1]
				
				\draw [color={rgb, 255:red, 155; green, 155; blue, 155 }  ,draw opacity=1 ] (74.5,108.33) -- (292.2,108.33)(87.5,21.73) -- (87.5,140.67) (285.2,103.33) -- (292.2,108.33) -- (285.2,113.33) (82.5,28.73) -- (87.5,21.73) -- (92.5,28.73)  ;
				\draw [color={rgb, 255:red, 0; green, 0; blue, 0 }  ,draw opacity=1 ][line width=1.5]    (86.4,107.8) -- (261.6,31) ;
				\draw [color={rgb, 255:red, 0; green, 0; blue, 0 }  ,draw opacity=1 ][line width=1.5]    (86.4,107.8) -- (262.5,108.33) ;
				\draw  [draw opacity=0][fill={rgb, 255:red, 155; green, 155; blue, 155 }  ,fill opacity=0.55 ][line width=2.25]  (244.17,108) -- (86.4,107.8) -- (223.5,48.67) -- cycle ;
				\draw  [draw opacity=0] (144.87,82.74) .. controls (148.17,90.43) and (150,98.9) .. (150,107.8) .. controls (150,107.87) and (150,107.93) .. (150,108) -- (86.4,107.8) -- cycle ; \draw  [color={rgb, 255:red, 74; green, 74; blue, 74 }  ,draw opacity=1 ] (144.87,82.74) .. controls (148.17,90.43) and (150,98.9) .. (150,107.8) .. controls (150,107.87) and (150,107.93) .. (150,108) ;  
				
				\draw (289.07,85.8) node [anchor=north west][inner sep=0.75pt]  [color={rgb, 255:red, 128; green, 128; blue, 128 }  ,opacity=1 ]  {$x_{1}$};
				\draw (94.67,21.93) node [anchor=north west][inner sep=0.75pt]  [color={rgb, 255:red, 128; green, 128; blue, 128 }  ,opacity=1 ]  {$x_{2}$};
				\draw (153.2,83) node [anchor=north west][inner sep=0.75pt]    {$\theta $};
				\draw (103.52,72.8) node [anchor=north west][inner sep=0.75pt]  [rotate=-336.39]  {$\partial _{\nu } u=u$ for $T_{\theta }^{N/D}$};
				\draw (102.27,113.27) node [anchor=north west][inner sep=0.75pt]    {$\partial _{\nu } u=0$ for $T_{\theta }^{N}$, $u=0$ for $T_{\theta }^{D}$};
				\draw (200.47,69.53) node [anchor=north west][inner sep=0.75pt]    {$\cS_{\theta }^{+}$};
				\draw (266.67,16.4) node [anchor=north west][inner sep=0.75pt]    {$\Gamma _{\theta }^{+}$};
				\draw (258,88.07) node [anchor=north west][inner sep=0.75pt]    {$\Gamma $};

			\end{tikzpicture}
		}
		
	\end{minipage}\begin{minipage}{0.55\textwidth}
		\caption{\small The upper half $\cS_\theta^+$ of the sector $\cS_\theta$.}\label{fig-sec2}
	\end{minipage}
\end{figure}

By applying the usual decomposition by parity \cite[Sec.~3.1]{khp} one shows the unitary equivalence $T_\theta\simeq T^N_\theta\oplus T^D_\theta$, where $T^{N/D}_\theta$ are the Laplacians $u\mapsto-\Delta u$ in $L^2(\cS^+_\theta)$
with the boundary conditions
\begin{align*}
	\partial_\nu u&=u \text{ on }\Gamma^+_\theta \text{ for both }T^{N/D}_\theta,\\
	\partial_\nu u&=0 \text{ on }\Gamma \text{ for }T^{N}_\theta,\\
	u&=0 \text{ on }\Gamma \text{ for }T^{D}_\theta,
\end{align*}		
and a simple extension argument shows that $T^D_\theta\ge -1$, hence, the operator $T^N_\theta$
has the same $\kappa(\theta)$ eigenvalues in $(-\infty,-1)$ as $T_\theta$.

Let us denote by $\Tilde \cF_\theta^{N}$ the space of the bounded weak solutions of
\begin{equation}
	\label{tnt1}
	-\Delta F=-F \text{ in }\cS^+_\theta,\quad \partial_\nu F=F \text{ on }\Gamma^+_\theta,
	\quad \partial_\nu F=0 \text{ on }\Gamma,
\end{equation}
and by $\Tilde\cF_\theta^{D}$ the vector space of the bounded weak solutions of
\begin{equation}
	\label{tdt1}
	-\Delta F=-F \text{ in }\cS^+_\theta,\quad \partial_\nu F=F \text{ on }\Gamma^+_\theta,
	\quad F=0 \text{ on }\Gamma,
\end{equation}
then the standard parity argument shows
\begin{equation}
	\label{fsum1}
\dim \cF_\theta=\dim \Tilde \cF_\theta^N+\dim \Tilde\cF_\theta^D.
\end{equation}

\begin{figure}
	\centering
	
	\scalebox{0.8}{\begin{tikzpicture}[x=0.75pt,y=0.75pt,yscale=-1,xscale=1]
			
			\draw [color={rgb, 255:red, 155; green, 155; blue, 155 }  ,draw opacity=1 ] (19.5,139.8) -- (237.2,139.8)(33.2,52.73) -- (33.2,155) (230.2,134.8) -- (237.2,139.8) -- (230.2,144.8) (28.2,59.73) -- (33.2,52.73) -- (38.2,59.73)  ;
			\draw [color={rgb, 255:red, 0; green, 0; blue, 0 }  ,draw opacity=1 ][line width=1.5]    (31.4,138.8) -- (181.2,74.2) ;
			\draw  [draw opacity=0][fill={rgb, 255:red, 155; green, 155; blue, 155 }  ,fill opacity=0.55 ][line width=2.25]  (189.17,139) -- (31.4,138.8) -- (168.5,79.67) -- cycle ;
			\draw [color={rgb, 255:red, 155; green, 155; blue, 155 }  ,draw opacity=1 ] (430.7,139.8) -- (648.4,139.8)(444.4,52.73) -- (444.4,155) (641.4,134.8) -- (648.4,139.8) -- (641.4,144.8) (439.4,59.73) -- (444.4,52.73) -- (449.4,59.73)  ;
			\draw  [draw opacity=0] (89.87,113.74) .. controls (93.17,121.43) and (95,129.9) .. (95,138.8) .. controls (95,138.87) and (95,138.93) .. (95,139) -- (31.4,138.8) -- cycle ; \draw  [color={rgb, 255:red, 74; green, 74; blue, 74 }  ,draw opacity=1 ] (89.87,113.74) .. controls (93.17,121.43) and (95,129.9) .. (95,138.8) .. controls (95,138.87) and (95,138.93) .. (95,139) ;  
			\draw [color={rgb, 255:red, 0; green, 0; blue, 0 }  ,draw opacity=1 ][line width=1.5]    (442.6,138.8) -- (342.6,59.8) ;
			\draw  [draw opacity=0][fill={rgb, 255:red, 155; green, 155; blue, 155 }  ,fill opacity=0.55 ][line width=2.25]  (600.37,139) -- (442.6,138.8) -- (355.4,68.6) -- (425,63) -- (564.2,91) -- cycle ;
			\draw  [draw opacity=0] (393.21,98.73) .. controls (404.87,84.37) and (422.66,75.2) .. (442.6,75.2) .. controls (477.73,75.2) and (506.2,103.67) .. (506.2,138.8) .. controls (506.2,138.87) and (506.2,138.93) .. (506.2,139) -- (442.6,138.8) -- cycle ; \draw  [color={rgb, 255:red, 74; green, 74; blue, 74 }  ,draw opacity=1 ] (393.21,98.73) .. controls (404.87,84.37) and (422.66,75.2) .. (442.6,75.2) .. controls (477.73,75.2) and (506.2,103.67) .. (506.2,138.8) .. controls (506.2,138.87) and (506.2,138.93) .. (506.2,139) ;  
			\draw [color={rgb, 255:red, 0; green, 0; blue, 0 }  ,draw opacity=1 ][line width=3.75]    (442.6,138.8) -- (618.7,139.33) ;
			\draw [color={rgb, 255:red, 0; green, 0; blue, 0 }  ,draw opacity=1 ][line width=3.75]    (31.4,138.8) -- (207.5,139.33) ;
			
			\draw (234.07,116.8) node [anchor=north west][inner sep=0.75pt]  [color={rgb, 255:red, 128; green, 128; blue, 128 }  ,opacity=1 ]  {$x$};
			\draw (39.67,52.93) node [anchor=north west][inner sep=0.75pt]  [color={rgb, 255:red, 128; green, 128; blue, 128 }  ,opacity=1 ]  {$y$};
			\draw (98.2,114) node [anchor=north west][inner sep=0.75pt]    {$\theta $};
			\draw (145.47,100.53) node [anchor=north west][inner sep=0.75pt]    {\large $U^{\theta }$};
			\draw (50.2,142.47) node [anchor=north west][inner sep=0.75pt]    {$\Sigma$ (Robin boundary)};
			\draw (187,63.2) node [anchor=north west][inner sep=0.75pt]    {$x=y\cot \theta $};
			\draw (645.27,116.8) node [anchor=north west][inner sep=0.75pt]  [color={rgb, 255:red, 128; green, 128; blue, 128 }  ,opacity=1 ]  {$x$};
			\draw (452.87,44.93) node [anchor=north west][inner sep=0.75pt]  [color={rgb, 255:red, 128; green, 128; blue, 128 }  ,opacity=1 ]  {$y$};
			\draw (463.8,87.6) node [anchor=north west][inner sep=0.75pt]    {$\theta $};
			\draw (540.67,104.53) node [anchor=north west][inner sep=0.75pt]    {\large $U^{\theta }$};
			\draw (461.4,142.47) node [anchor=north west][inner sep=0.75pt]    {$\Sigma$ (Robin boundary)};
			\draw (319,35.2) node [anchor=north west][inner sep=0.75pt]    {$x=y\cot \theta $};

		\end{tikzpicture}
	}
	
	\caption{The sector $U^\theta$ for $\theta<\frac{\pi}{2}$ (on the left) and $\theta\ge \frac{\pi}{2}$ (on the right).}\label{fig-uth}
\end{figure}

For the subsequent analysis it will be useful to pass to the coordinates $(y_1,y_2)$ as~in Remark~\ref{rem26} (see Fig.~\ref{fig-yy}) in order to put the Robin condition on a $\theta$-independent part of the boundary.
Namely, for any $\theta\in(0,\pi)$ we will work with the sets
\begin{align*}
	U^\theta&:=\big\{(x,y):\ 0<\arg (x+\rmi y)<\theta\big\}\\
	&\phantom{:}\equiv\big\{(x,y):\ y>0,\ x> y\cot \theta\big\},\\
	\Sigma&:=\big\{(x,y):\ x\ge 0,\ y=0\big\}\subset\partial U^\theta,
\end{align*}	
see Fig.~\ref{fig-uth},
and denote by $N^\theta$ the Laplacian  $u\mapsto-\Delta u$ on $U^\theta$ with the Robin boundary condition $\partial_\nu u=u$ on $\Sigma$
and Neumann boundary condition on the oblique part $\{x=y\cot\theta\}$.
The bilinear form $n^\theta$ for $N^\theta$ is
\[
n^\theta(u,v)=\iint_{U^\theta}\nabla u\cdot\nabla v\,\dxy-\int_\Sigma uv\,\dd s,\quad u,v\in H^1(U^\theta),
\]
and $N^\theta$ is unitarily equivalent to the above $T^N_\theta$, in particular, it has exactly $\kappa(\theta)$
eigenvalues in $(-\infty,-1)$.

The Robin-Neumann threshold resonance problem \eqref{tnt1} takes the form
\begin{equation}
	\label{FFF-aa}
	\left\{
	\begin{aligned}
		-\Delta F&=-F \text{ in } U^\theta,\\
		\partial_\nu F&=F \text{ on } \Sigma,\\
		\partial_\nu F&=0 \text{ on } \{y>0,\ x=y\cot\theta\},
	\end{aligned}\right.
\end{equation}
which is understood as $F\in H^1(U')$ for any bounded subdomain $U'\subset U^\theta$, with 
\begin{equation}
	\label{FFF-aa2}
\iint_{U^\theta}\nabla F\cdot\nabla \varphi\,\dxy-\int_\Sigma F\varphi\,\dd s=-\iint_{U^\theta}F\varphi\,\dxy
\end{equation}
for any $\varphi\in H^1(U^\theta)$ with bounded support, and we denote by $\cF^N_\theta$ the space of the bounded solutions
to \eqref{FFF-aa}. Similarly, the Robin-Dirichlet threshold resonance problem \eqref{tdt1} takes the form
\begin{equation}
	\label{FFF-dir1}
\left\{	\begin{aligned}
		-\Delta F&=-F \text{ in } U^\theta,\\
		\partial_\nu F&=F \text{ on } \Sigma,\\
		F&=0 \text{ on } \{y>0,\ x=y\cot\theta\},
	\end{aligned}\right.
\end{equation}
which is understood as $F\in H^1(U')$ for any bounded subdomain $U'\subset U^\theta$ with $F=0$ on $\{x=y\cot \theta\}$,
and
\begin{equation}
		\label{FFF-dir2}
\iint_{U^\theta}\nabla F\cdot\nabla \varphi\,\dxy-\int_\Sigma F\varphi\,\dd s=-\iint_{U^\theta}F\varphi\,\dxy
\end{equation}
for any $\varphi\in H^1(U^\theta)$ with bounded support and $\varphi=0$ on $\{x=y\cot \theta\}$. We denote by $\cF^D_\theta$ the space of the bounded solutions
to \eqref{FFF-dir1}.
By construction and due to \eqref{fsum1} we have
\begin{equation}
	\label{fsum2}
	\begin{aligned}
	\dim\Tilde \cF^{N/D}_\theta&=\dim\cF^{N/D}_\theta,\\
	\dim \cF_\theta&=\dim \cF_\theta^N+\dim \cF_\theta^D.
	\end{aligned}
\end{equation}

Recall that the function $\theta\mapsto\kappa(\theta)$, which counts the eigenvalues of the operator $N^\theta$ in $(-\infty,-1)$,
 has jumps at the critical half-angles
\begin{equation}
	\label{kjump}
\theta_n:=\dfrac{\pi}{2(2n+1)},\quad n=0,1,2\ldots,\quad
	\kappa(\theta_n^-)-\kappa(\theta_n^+)\equiv \kappa(\theta_n^-)-\kappa(\theta_n)=1,
\end{equation}	
and is continuous at the other values of $\theta$.

As a preparation for the proofs below, for $R>1$ introduce the cutoff functions
\[
\chi_R:\ (x,y)\mapsto \begin{cases}
	1, & \sqrt{x^2+y^2}\le R,\\[\medskipamount]
	\dfrac{2\log R-\log\sqrt{x^2+y^2}}{\log R}, & R<\sqrt{x^2+y^2}<R^2,\\
	0, & \sqrt{x^2+y^2}\ge R^2.
\end{cases}
\]
Note that we have
\[
0\le \chi_R\le 1,\quad \supp \nabla\chi_R\subset \Big\{ (x,y):\ R\le \sqrt{x^2+y^2}\le R^2\Big\},
\]
and for $R< \sqrt{x^2+y^2}< R^2$ it holds that
\[
\nabla \chi_R(x,y)=-\dfrac{1}{\log R} \cdot \dfrac{(x,y)}{x^2+y^2},\quad
\big|\nabla \chi_R(x,y)\big|^2=\dfrac{1}{(\log R)^2(x^2+y^2)}.
\]
Hence, by passing to polar coordinates one computes
\begin{equation}
	\label{chinabla}
	\begin{aligned}
		\iint_{U^\theta}&\big|\nabla \chi_R(x,y)\big|^2\dxy=
		\dfrac{1}{(\log R)^2}\iint_{\substack{ R\le \sqrt{x^2+y^2}\le R^2\\ 0<\arg(x+\rmi y)<\theta}}\dfrac{1}{x^2+y^2}\dxy\\
		&=\dfrac{1}{(\log R)^2}\int_0^\theta \int_R^{R^2} \dfrac{1}{r^2} r\,\dd r\,\dd\varphi
		=\dfrac{\theta}{(\log R)^2}\int_R^{R^2} \dfrac{1}{r} \,\dd r\\
		&=\dfrac{\theta}{(\log R)^2} \big(\log (R^2)-\log R\big)=\dfrac{\theta}{\log R}\xrightarrow{R\to\infty}0.
	\end{aligned}
\end{equation}

Further note that for any $F\in \cF^N_\theta$ and any $R>1$ the function $\chi_R^2 F$ belongs to $H^1(U^\theta)$
and has bounded support, so one can use \eqref{FFF-aa2} with $\varphi:=\chi_R^2 F$. With the help of
\begin{equation*}
	\begin{aligned}
		\nabla F&\cdot \nabla(\chi_R^2 F)=\nabla F\cdot(2F\chi_R\nabla\chi_R+\chi_R^2\nabla F)\\
		&=2\chi_R F \nabla F\cdot\nabla\chi_R+\chi_R^2|\nabla F|^2
		=|\chi_R \nabla F+ F\nabla\chi_R|^2-|\nabla \chi_R|^2 F^2\\
		&=\big|\nabla(\chi_R F)\big|^2-|\nabla \chi_R|^2 F^2
	\end{aligned}
\end{equation*}
we arrive at
\begin{equation}
	\label{uuff}
	\iint_{U^\theta}\Big(\big|\nabla(\chi_R F)\big|^2+(\chi_R F)^2\Big)\,\dxy-\int_\Sigma (\chi_R F)^2\dd s
	=\iint_{U^\theta} |\nabla \chi_R|^2 F^2\dxy.
\end{equation}
Similarly note that for any $F\in \cF^D_\theta$ and any $R>1$ the function $\chi_R^2 F$ belongs to $H^1(U^\theta)$, has bounded support and vanishes on $\{x=y\cot\theta\}$, hence, it can be used as $\varphi$ in \eqref{FFF-dir2}, and the same computation shows that \eqref{uuff} holds in this case too.

\subsection{Exclusion of threshold resonances in easy cases}

\begin{lemma}\label{lemdd}
For any $\theta\in(0,\pi)$ one has $\cF^D_\theta=\{0\}$.
\end{lemma}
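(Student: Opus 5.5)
We take $F\in\cF^D_\theta$ and show $F\equiv 0$ using identity \eqref{uuff}, which holds for the Dirichlet case as well. Its right-hand side is bounded by $\|F\|_{L^\infty}^2\,\theta/\log R$ because of \eqref{chinabla}, so it tends to $0$ as $R\to\infty$. The plan is to bound the left-hand side from below by a nonnegative quantity that does not vanish unless $F\equiv 0$.

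\textbf{One-dimensional inequality.} For $u\in H^1(U^\theta)$ with bounded support and $u=0$ on the oblique boundary $\{x=y\cot\theta\}$, we claim
\[
\iint_{U^\theta}\big(|\nabla u|^2+u^2\big)\,\dxy-\int_\Sigma u^2\,\dd s\ \ge\ \iint_{U^\theta}\big(\partial_y u+u\big)^2\,\dxy\ \ge 0 .
\]
To see this, fix $x$ and consider the vertical slice $y\in(0,h(x))$. Here $h(x)=x\tan\theta$ for $x>0$ when $\theta<\frac{\pi}{2}$. When $\theta\ge\frac{\pi}{2}$ the slice is $(0,\infty)$ for $x>0$; for $x<0$ it is a half-line $(x\tan\theta,\infty)$ that does not meet $\Sigma$.

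\emph{Slices meeting $\Sigma$ at $y=0$.} Integrating by parts gives
\[
\int (u')^2+u^2\,\dd y-u(0)^2=\int (u'+u)^2\,\dd y-\big[u^2\big]_0^{h}-u(0)^2=\int(u'+u)^2\,\dd y,
\]
because $u$ vanishes at the upper endpoint $y=h(x)$ (Dirichlet condition, or bounded support), so $[u^2]_0^h=-u(0)^2$.

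\emph{Slices not meeting $\Sigma$.} The same expression is simply $\ge 0$.

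Adding $\int(\partial_x u)^2$ and integrating over $x$ yields the claim. This is the only geometric step, and it requires care in splitting the cases $\theta<\frac{\pi}{2}$ and $\theta\ge\frac{\pi}{2}$.

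\textbf{Passage to the limit.} Apply the inequality with $u=\chi_R F$. Combined with \eqref{uuff}, this gives
\[
\iint(\partial_x(\chi_RF))^2+(\partial_y(\chi_RF)+\chi_RF)^2\,\dxy\le \|F\|_\infty^2\,\frac{\theta}{\log R}.
\]
On any fixed ball $\chi_R=1$ for large $R$, so letting $R\to\infty$ shows $\partial_xF=0$ and $\partial_yF+F=0$ a.e. in $U^\theta$. Hence locally $F=c\,e^{-y}$ with a constant $c$. Since $U^\theta$ is connected, $c$ is global.

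\textbf{Conclusion.} The Dirichlet condition on the oblique side $\{x=y\cot\theta,\ y>0\}$ forces $c\,e^{-y}=0$ there, so $c=0$ and $F\equiv0$. Equivalently, $\partial_xF=0$ makes $F$ constant along horizontal lines, each of which meets the oblique boundary where $F=0$.

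The main obstacle is justifying the slice integration by parts for functions that are merely $H^1$ with traces. We would handle this by density of smooth functions vanishing near the oblique side (still with bounded support) in the relevant subspace of $H^1$, for which both sides of the inequality are continuous.
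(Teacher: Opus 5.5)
Your proof is correct and is essentially the paper's argument: both use identity \eqref{uuff}, complete the square slice by slice to obtain $\iint\big((\partial_x u)^2+(\partial_y u+u)^2\big)$, let $R\to\infty$, and conclude $F=c\,e^{-y}$ with $c=0$ from the Dirichlet condition; the only cosmetic difference is that the paper first extends $F$ by zero to the half-plane $\RR\times(0,\infty)$, so every vertical slice is $(0,\infty)$ and no case split in $\theta$ is needed. One small slip: on the slices that do not meet $\Sigma$ (which occur for $\theta>\frac{\pi}{2}$, $x<0$), nonnegativity alone does not give your inequality as stated; you need the equality $\int\big((\partial_y u)^2+u^2\big)\,\mathrm{d}y=\int(\partial_y u+u)^2\,\mathrm{d}y$, which does hold because $u$ vanishes at both endpoints of the slice.
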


\begin{proof}
	Let $F\in\cF^D_\theta$, then the identity \eqref{uuff} holds. 
	Denote by $\Tilde F$  the extension of $F$ by zero to the half-plane $\RR\times(0,\infty)$.
	Due to $F=0$ on $\{x=y\cot\theta\}$, one has $\Tilde F\in H^1(U')$ for any bounded subdomain $U'\subset\RR\times(0,\infty)$, with
	$\nabla \Tilde F=0$ outside $U^\theta$, so \eqref{uuff} yields
\begin{multline*}
\int_\RR\bigg[\int_0^\infty \Big(\big|\partial_x(\chi_R \Tilde F)(x,y)\big|^2+\big|\partial_y(\chi_R \Tilde F)(x,y)^2+(\chi_R \Tilde F)(x,y)\big|^2\Big)\,\dd y\\
-(\chi_R \Tilde F)(x,0)^2\bigg]\,\dd x=\iint_{U^\theta} |\nabla \chi_R|^2 F^2\,\dxy.
\end{multline*}
For any $x$, the function $y\mapsto (\chi_R \Tilde F)(x,y)$ vanishes identically at least for all $y>R^2$, hence, 
\[
-(\chi_R \Tilde F)^2(x,0)=\int_0^\infty \partial_y\Big( (\chi_R \Tilde F)^2\Big)(x,y)\,\dd y=2\int_0^\infty (\chi_R \tilde F)(x,y) \partial_y(\chi_R \Tilde F)(x,y)\,\dd y,
\]
which allows one to rewrite the preceding identity as
\begin{equation}
	\label{uuff-aa}
\iint_{\RR\times(0,\infty)}\Big(\big|\partial_x(\chi_R \Tilde F)\big|^2+\big|\partial_y(\chi_R \Tilde F) +(\chi_R \Tilde F)\big|^2\Big)\,
	\dxy=\iint_{U^\theta} |\nabla \chi_R|^2 F^2\,\dxy.
\end{equation}

For any fixed $\rho>0$ and all $R>\max\{1,\rho\}$ we have $\chi_R=1$ on $B_\rho(0)$. As all integrands in \eqref{uuff-aa} are non-negative, by sending $R\to \infty$ we obtain
\begin{align*}
	\iint_{B_\rho(0)\cap (\RR\times(0,\infty))}&\Big((\partial_x \Tilde F)^2+(\partial_y\Tilde F +\Tilde F)^2\Big)\dxy\\
	&\le \iint_{\RR\times(0,\infty)}\Big(\big|\partial_x(\chi_R \Tilde F)\big|^2+\big|\partial_y(\chi_R \Tilde F) +(\chi_R \Tilde F)\big|^2\Big)
	\dxy\\
	&\stackrel{\eqref{uuff-aa}}{=} \iint_{U^\theta} |\nabla \chi_R|^2 F^2\dxy
	\le \|F\|_\infty^2\iint_{U^\theta} |\nabla \chi_R|^2\,\dxy\xrightarrow[\text{by }\eqref{chinabla}]{R\to\infty}0.
\end{align*}
Due to the arbitrariness of $\rho$ we arrive at
\[
\partial_x\Tilde F=\partial_y\Tilde F+\Tilde F=0 \text{ in }\RR\times(0,\infty),
\]
hence, $\Tilde F(x,y)=ce^{-y}$ with some $c\in\RR$. As $\Tilde F$ vanishes outside $U^\theta$, one obtains $c=0$
and then $\Tilde F=0$, i.e. $F=0$.
\end{proof}

By using the result of Lemma~\ref{lemdd} in~\eqref{fsum2} we arrive at
\begin{equation}
	\label{fsum3}
	\dim \cF_\theta=\dim \cF_\theta^N,
\end{equation}
and Theorem~\ref{thm-sec3} becomes a direct consequence of the following statement:

\begin{lemma}\label{thm31}
	For any $\theta\in (0,\pi)$ one has
	\[
	\dim \cF_\theta^N=\begin{cases}
		1, & \text{ if $\theta$ is one of the $\theta_n$ in \eqref{kjump},}\\
		0, & \text{otherwise.}
	\end{cases}
	\]
\end{lemma}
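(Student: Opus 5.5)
Three things have to be shown: $\dim\cF^N_\theta\le 1$ for every $\theta$, $\cF^N_\theta\neq\{0\}$ at each critical $\theta_n$, and $\cF^N_\theta=\{0\}$ at every non-critical $\theta$. The existence part is immediate. By Remark~\ref{rem26}, Ursell's function $\Psi_n=\psi_{n+1,\theta_n}$ is bounded and solves \eqref{urseq} on $\cS^+_{\theta_n}$ with the Neumann condition on $\Gamma$. After the change of coordinates to $U^{\theta_n}$ it is therefore a nonzero element of $\cF^N_{\theta_n}$. The other two statements will be obtained from the identity \eqref{uuff} together with the test-function idea announced in the introduction: a nontrivial bounded solution can be cut off by $\chi_R$ and then serves as an additional trial function.

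\emph{Nonexistence at non-critical angles.} Let $0\ne F\in\cF^N_\theta$ and set $u_R:=\chi_R F$. By \eqref{uuff} and \eqref{chinabla},
\[
n^\theta(u_R,u_R)+\|u_R\|^2=\iint_{U^\theta}|\nabla\chi_R|^2F^2\,\dxy\le \frac{\theta\|F\|_\infty^2}{\log R}\to0 .
\]
On the other hand, $\|u_R\|_{L^2}$ stays bounded below, and $u_R$ is almost orthogonal to the $\kappa(\theta)$ eigenfunctions $\psi_{j,\theta}$. The latter holds because those eigenfunctions decay exponentially and, by the weak formulation, $F$ is orthogonal to them (use $\psi_{j,\theta}$ as test function together with the eigenvalue equation, which gives $(\cE_j+1)\langle F,\psi_j\rangle=0$). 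Hence the span of $\psi_{1,\theta},\dots,\psi_{\kappa(\theta),\theta}$ and $u_R$ is a $(\kappa(\theta)+1)$-dimensional subspace on which the form $n^\theta+1$ is at most $o(1)$ times the norm.

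To turn this into an extra eigenvalue below $-1$ I will not work at the angle $\theta$ itself but perturb the angle, which is where monotonicity enters. Rotating $U^\theta$ to $U^{\theta'}$ with $\theta'>\theta$ slightly, via an angular dilation $(r,\phi)\mapsto(r,\phi\theta'/\theta)$, maps $u_R$ to a function whose form value changes by $O(\theta'-\theta)\|\nabla u_R\|^2$. That error is not small a priori, because $\|\nabla u_R\|^2$ grows like the area of the support. I therefore plan a different route. Since $F\notin L^2$ but $n^\theta(u_R)+\|u_R\|^2\to0$, I can add a small negative perturbation: the test function $u_R+\varepsilon\psi$ with a fixed compactly supported $\psi$ chosen so that $n^\theta(u_R,\psi)+\langle u_R,\psi\rangle\ne0$ for large $R$. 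Then, for $\theta'$ slightly smaller than $\theta$ (embedding $U^{\theta'}\subset U^\theta$ and using the Neumann condition), one obtains $\kappa(\theta')\ge\kappa(\theta)+1$ for all $\theta'<\theta$ close to $\theta$. This says that $\kappa$ jumps at $\theta$, so $\theta$ must be critical by Remark~\ref{rem26}.

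\emph{Uniqueness.} If $\dim\cF^N_\theta\ge2$, the same construction applied to two independent bounded solutions, together with the cutoff estimate, would give a jump of at least $2$ in $\kappa$ at $\theta$. This contradicts \eqref{kjump}.

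The main obstacle is the middle step: making the perturbation in $\theta$ rigorous, i.e.\ producing a genuine trial function on the slightly smaller or larger sector from $\chi_R F$ with form value strictly below $-1$. If the angular-perturbation approach fails, the fallback is to use the explicit Ursell representation \eqref{urssely}. Near the Robin side, any bounded solution should behave like $a e^{-y}$ plus exponentially decaying terms. I would try to show this by separating variables in the $(y_1,y_2)$ coordinates and using Proposition~\ref{prop-nittka} for local boundedness. One would then match the coefficient $a$ with Ursell solutions, whose boundedness forces $\theta=\theta_n$.
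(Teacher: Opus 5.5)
You flag the key step yourself as the main obstacle, and it is a genuine gap: the device you propose there cannot work. For any fixed $\psi\in H^1(U^\theta)$ with bounded support and all $R$ so large that $\chi_R=1$ near $\supp\psi$, the weak formulation \eqref{FFF-aa2} gives $(n^\theta+I)(\chi_R F,\psi)=(n^\theta+I)(F,\psi)=0$. So no $\psi$ with a nonvanishing cross term exists. (The paper uses exactly this identity to decouple a compactly supported trial space $V$ from $\chi_R\cF$.) More fundamentally, nothing done at the angle $\theta$ itself can help, because $N^\theta$ has only $\kappa(\theta)$ eigenvalues below $-1$. Your bound that $n^\theta+1$ is $o(1)$ times the norm on a $(\kappa(\theta)+1)$-dimensional space is therefore vacuous, and all strict negativity must come from passing to an angle $\theta'<\theta$. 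What is needed there is not a small error but a \emph{signed} gain, bounded below on a fixed compact set, so that it dominates $\eps_R\|F\|_\infty^2$ with $\eps_R=\theta/\log R$.

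The paper obtains this gain from the dilation $(x,y)\mapsto(\lambda x,y)$, $\lambda=\tan\theta/\tan\theta'$. It preserves the $L^2$-norm, the $\partial_y$-term and the Robin term, and gives $n^{\theta'}(\Theta u,\Theta u)=n^\theta(u,u)-\delta\|\partial_x u\|^2$ with $\delta>0$. This disposes of your concern about the angular dilation, whose error has no sign. The paper then uses the injectivity of $F\mapsto\partial_xF$ on $\cF^N_\theta$: among bounded solutions only multiples of $e^{-y}$ have $\partial_xF=0$, and these violate the oblique Neumann condition when $\theta<\frac{\pi}{2}$. Together with a norm-equivalence argument on the finite-dimensional space $V\times\cF$, localized to $K=U^\theta\cap B_{R_0}(0)$, this yields $\dim\cF^N_\theta\le\kappa(\theta^-)-\kappa(\theta)$. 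Since that dilation exists only for $\theta<\frac{\pi}{2}$, the case $\theta\ge\frac{\pi}{2}$ is handled separately by rewriting \eqref{uuff} as a sum of squares, as in Lemma~\ref{lemdd}.

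Your parenthetical idea of restricting to $U^{\theta'}\subset U^\theta$ would in fact supply such a gain, but only if used differently. For $w\in H^1(U^\theta)$ one has $(n^{\theta'}+I)(w|_{U^{\theta'}})=(n^\theta+I)(w)-\iint_{U^\theta\setminus U^{\theta'}}(|\nabla w|^2+w^2)\,\dxy$. For $w=v+\chi_RF$ the subtracted term is at least $\iint_{K'}(|\nabla(v+F)|^2+(v+F)^2)\,\dxy$, where $K'=(U^\theta\setminus\overline{U^{\theta'}})\cap B_{R_0}(0)$. By unique continuation, $(v,F)\mapsto\mu\|v\|^2+\iint_{K'}(|\nabla(v+F)|^2+(v+F)^2)\,\dxy$ is positive definite on $V\times\cF$. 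The same finite-dimensional argument as in the paper then gives $\kappa(\theta')\ge\kappa(\theta)+\dim\cF$ for all $\theta'<\theta$, uniformly in $\theta\in(0,\pi)$.

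As written, however, your proposal contains neither this gain nor the quantitative comparison with $\eps_R$. So neither the vanishing at non-critical angles nor the bound $\dim\cF^N_\theta\le 1$ (which you derive from the same construction) is established. The fallback via Ursell's representation does not close the gap either. It presupposes that every bounded solution equals $ae^{-y}$ plus exponentially decaying terms of Ursell type, a completeness statement for which you give no argument. Moreover, the sector is not a product domain in the $(y_1,y_2)$ coordinates, so separation of variables is not available there.
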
	

For the proof, we will distinguish between $\theta\ge \frac{\pi}{2}$ and $\theta<\frac{\pi}{2}$. One deals with the first
case by proceeding very similarly to Lemma~\ref{lemdd}.

\begin{lemma}\label{lem32a}
The assertion of Lemma~\ref{thm31} holds for all $\theta\ge\frac{\pi}{2}$.
\end{lemma}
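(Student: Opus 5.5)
For $\theta\ge\frac{\pi}{2}$ the only critical half-angle in range is $\theta_0=\frac{\pi}{2}$, so we must show $\dim\cF^N_\theta=0$ for $\theta>\frac{\pi}{2}$ and $\dim\cF^N_{\pi/2}=1$. The plan is to repeat the argument of Lemma~\ref{lemdd} almost verbatim, replacing extension by zero with a direct use of the geometry of $U^\theta$. Let $F\in\cF^N_\theta$, so that \eqref{uuff} holds.

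The first step is to rewrite the boundary term. For $\theta\ge\frac{\pi}{2}$ the set $U^\theta$ has the following property: every vertical half-line $\{(x,y):\,y>0\}$ with $x\ge0$ lies entirely in $U^\theta$. Indeed, for $\theta=\frac{\pi}{2}$ the condition is simply $x>0$, and for $\theta>\frac{\pi}{2}$ we have $\cot\theta<0$, so $x\ge 0>y\cot\theta$. Hence for each $x\ge0$ the function $y\mapsto(\chi_R F)(x,y)$ is in $H^1(0,\infty)$ and vanishes for $y>R^2$, which gives
\[
-(\chi_R F)(x,0)^2=2\int_0^\infty (\chi_RF)\,\partial_y(\chi_RF)\,\dd y .
\]
Integrating over $x\ge0$ and substituting into \eqref{uuff} yields
\[
\iint_{U^\theta\cap\{x\ge0\}}\Big(\big|\partial_x(\chi_RF)\big|^2+\big|\partial_y(\chi_RF)+\chi_RF\big|^2\Big)\dxy
+\iint_{U^\theta\cap\{x<0\}}\Big(\big|\nabla(\chi_RF)\big|^2+(\chi_RF)^2\Big)\dxy
=\iint_{U^\theta}|\nabla\chi_R|^2F^2\dxy .
\]
All integrands are non-negative. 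By \eqref{chinabla} and the boundedness of $F$, the right-hand side tends to zero as $R\to\infty$. Arguing on balls $B_\rho(0)$ exactly as in Lemma~\ref{lemdd}, we conclude that $\partial_xF=0$ and $\partial_yF+F=0$ on $U^\theta\cap\{x\ge0\}$, and that $F=0$ on $U^\theta\cap\{x<0\}$.

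The second step is the case distinction. The domain $U^\theta\cap\{x>0\}$ is the quadrant, so there $F(x,y)=ce^{-y}$. If $\theta>\frac{\pi}{2}$, the region $\{x<0\}\cap U^\theta$ is nonempty and $F$ vanishes there. Since $F\in H^1_\loc$ has no jump across the line $\{x=0,\ y>0\}$, its trace gives $ce^{-y}=0$, so $c=0$ and $F=0$. If $\theta=\frac{\pi}{2}$, every bounded solution is a multiple of $e^{-y}$. Conversely, $e^{-y}$ is a bounded solution: it satisfies $-\Delta F=-F$, satisfies $\partial_\nu F=-\partial_yF=F$ on $\Sigma$, and has zero normal derivative $-\partial_xF=0$ on $\{x=0\}$. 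This gives $\dim\cF^N_{\pi/2}=1$, in agreement with $\Tilde\Psi_0=2e^{-y_2}$.

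The only delicate point is justifying the slice-wise identity and the trace across $\{x=0\}$ for merely locally $H^1$ functions. Both follow from the standard absolute continuity of $H^1$ functions on almost every line, together with the fact that $\chi_RF\in H^1(U^\theta)$ has bounded support.
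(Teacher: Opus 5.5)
Your proposal is correct and follows the paper's proof essentially step by step. The same slice identity on the quadrant turns \eqref{uuff} into a sum of non-negative terms; letting $R\to\infty$ gives $F=0$ on the extra wedge and $\partial_xF=\partial_yF+F=0$ on the quadrant; and the case $\theta=\frac{\pi}{2}$ is settled by $F=ce^{-y}$. The only deviation is for $\theta>\frac{\pi}{2}$: you get $c=0$ by matching the $H^1_\loc$ traces across $\{x=0\}$, whereas the paper applies unique continuation from the open set where $F$ vanishes, and both arguments are valid.
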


\begin{proof}
Let $\theta \in[\frac{\pi}{2},\pi\big)$ and $F\in \cF^N_\theta$, then the identity \eqref{uuff} takes the form
\begin{multline}
	\iint_{U^\theta\setminus U^\frac{\pi}{2}}\Big(\big|\nabla(\chi_R F)\big|^2+(\chi_R F)^2\Big)\,\dxy\\
	+\iint_{U^\frac{\pi}{2}}\Big(\big|\nabla(\chi_R F)\big|^2+(\chi_R F)^2\Big)\,\dxy-\int_0^\infty (\chi_R F)^2(x,0)\,\dd x\\
	=\iint_{U^\theta} \big|\nabla \chi_R(x,y)\big|^2 F(x,y)^2\,\dd y\,\dd x,
	\label{uuff2}
\end{multline}
while
\begin{multline*}
	\iint_{U^\frac{\pi}{2}}\Big(\big|\nabla(\chi_R F)\big|^2+(\chi_R F)^2\Big)\,\dxy-\int_0^\infty (\chi_R F)^2(x,0)\,\dd x\\
	=\int_0^\infty\bigg[\int_0^\infty \Big(\big|\partial_x(\chi_R F)(x,y)\big|^2
	+\big|\partial_y(\chi_R F)(x,y)\big|^2\\
	+(\chi_R F)(x,y)^2\Big)\,\dd y-(\chi_R F)^2(x,0)\bigg]\,\dd x.
\end{multline*}	
For $x>0$, the function $y\mapsto (\chi_R F)(x,y)$ vanishes identically for all $y>R^2$, hence, 
\[
-(\chi_R F)^2(x,0)=\int_0^\infty \partial_y\Big( (\chi_R F)^2\Big)(x,y)\,\dd y=2\int_0^\infty (\chi_R F)(x,y) \partial_y(\chi_R F)(x,y)\,\dd y,
\]
which allows one to rewrite the preceding identity as
\begin{multline*}
	\iint_{U^\frac{\pi}{2}}\Big(\big|\nabla(\chi_R F)\big|^2+(\chi_R F)^2\Big)\,\dxy-\int_0^\infty (\chi_R F)^2(x,0)\dd x\\
	=\iint_{U^\frac{\pi}{2}}\Big(\big|\partial_x(\chi_R F)\big|^2+\big|\partial_y(\chi_R F) +(\chi_R F)\big|^2\Big)
	\dxy,
\end{multline*}
and the substitution into \eqref{uuff2} yields
\begin{multline}
	\iint_{U^\theta\setminus U^\frac{\pi}{2}}\Big(\big|\nabla(\chi_R F)\big|^2+(\chi_R F)^2\Big)\,\dxy\\
	+\iint_{U^\frac{\pi}{2}}\Big(\big|\partial_x(\chi_R F)\big|^2+\big|\partial_y(\chi_R F) +(\chi_R F)\big|^2\Big)
	\dxy\\
	=\iint_{U^\theta} \big|\nabla \chi_R(x,y)\big|^2 F(x,y)^2\,\dxy.
	\label{uuff3}
\end{multline}

For any fixed $\rho>0$ and all $R>\max\{1,\rho\}$ we have $\chi_R=1$ on $B_\rho(0)$. As all integrands in \eqref{uuff3} are non-negative,
by sending $R\to \infty$ we obtain
\begin{align*}
	\iint_{B_\rho(0)\cap U^\theta\setminus U^\frac{\pi}{2}}&\Big(|\nabla F|^2+F^2\Big)\,\dxy
	+\iint_{B_\rho(0)\cap U^\frac{\pi}{2}}\Big((\partial_x F)^2+(\partial_yF +F)^2\Big)
	\dxy\\
	&\le
	\iint_{U^\theta\setminus U^\frac{\pi}{2}}\Big(\big|\nabla(\chi_R F)\big|^2+(\chi_R F)^2\Big)\,\dxy\\
	&\quad +\iint_{U^\frac{\pi}{2}}\Big(\big|\partial_x(\chi_R F)\big|^2+\big|\partial_y(\chi_R F) +(\chi_R F)\big|^2\Big)
	\dxy\\
	&\stackrel{\eqref{uuff3}}{=}\iint_{U^\theta} |\nabla \chi_R|^2 F^2\,\dxy\le \|F\|_\infty^2\iint_{U^\theta} |\nabla \chi_R|^2\,\dxy
	\xrightarrow[\text{by \eqref{chinabla}}]{R\to\infty}0,
\end{align*}
and due to the arbitrariness of $\rho$ we arrive at
\begin{align}
	F&=0 \text{ in }U^\theta\setminus U^\frac{\pi}{2}, \label{f0-a}\\
	\partial_xF=\partial_y F+F&=0 \text{ in }U^\frac{\pi}{2}. \label{f0-b}
\end{align}

If $\theta>\frac{\pi}{2}$, then $\theta$ is non-critical. The condition \eqref{f0-a} shows that $F$ vanishes on a non-empty open subset of $U^\theta$, so $F=0$ in $U^\theta$ due to the unique continuation principle, thus $\cF^N_\theta=\{0\}$.

The value $\theta=\frac{\pi}{2}$ is critical. The condition \eqref{f0-b} yields $F(x,y)=ce^{-y}$ with $c\in\RR$. On the other hand,
all such functions $F$ clearly belong to $\cF^N_\frac{\pi}{2}$, which gives $\dim\cF^N_\frac{\pi}{2}= 1$.	
\end{proof}

\subsection{Relating threshold resonances with the eigenvalue count}

To cover the case $\theta<\frac{\pi}{2}$ in Lemma~\ref{thm31} we need an intermediate step:

\begin{lemma}\label{lem32}
For any $\theta\in(0,\frac{\pi}{2})$ one has
	\begin{equation}
		\label{ftheta}
	 \dim\cF^N_\theta\le \kappa(\theta^-)- \kappa(\theta).
	\end{equation}
\end{lemma}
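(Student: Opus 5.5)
The plan is to show that every nonzero $F\in\cF^N_\theta$ supplies an additional eigenvalue below $-1$ for $N^{\theta'}$ whenever $\theta'<\theta$ is close to $\theta$. More precisely, fix $\theta\in(0,\frac\pi2)$ and suppose $\dim\cF^N_\theta\ge d$ with $d\ge 1$. Choose a $d$-dimensional subspace $\cF'\subset\cF^N_\theta$ and let $E$ be the $\kappa(\theta)$-dimensional span of the eigenfunctions of $N^\theta$ below $-1$. I aim to show that for every $\theta'<\theta$ sufficiently close to $\theta$, the operator $N^{\theta'}$ has at least $\kappa(\theta)+d$ eigenvalues in $(-\infty,-1)$. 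This gives $\kappa(\theta^-)\ge \kappa(\theta)+\dim\cF^N_\theta$, which is the claim.

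\textbf{Step 1: transplanting to smaller angles.} To compare $U^\theta$ with $U^{\theta'}$ I would use the angular dilation $(r,\varphi)\mapsto(r,\varphi\theta'/\theta)$ in polar coordinates. It fixes $\Sigma$ and maps $U^\theta$ onto $U^{\theta'}$. The form $n^{\theta'}$ of a transplanted function $u$ differs from $n^\theta(u,u)$ by quantities of relative size $O(|\theta-\theta'|)$ times $\iint|\nabla u|^2$, and the same holds for the $L^2$ norm, up to a constant Jacobian factor. On a finite-dimensional space of $H^1$ functions, the strict inequality $n^\theta(u,u)<-\|u\|^2$, uniform on the unit sphere, therefore persists for $\theta'$ close to $\theta$.

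\textbf{Step 2: the test space.} Take $V_R:=E+\chi_R\cF'$, consisting of $H^1$ functions with bounded support apart from $E$, whose elements decay exponentially. By \eqref{uuff},
\[
n^\theta(\chi_RF,\chi_RF)+\|\chi_RF\|^2=\iint|\nabla\chi_R|^2F^2\le \|F\|_\infty^2\,\theta/\log R\to0 .
\]
Hence these functions are ``almost at threshold'' but not below it, and the strict inequality needed in Step 1 is not yet available. I therefore perturb by a small multiple of a fixed function: for each $F\in\cF'$, consider $\chi_RF+\eps g$ with $g$ localized. Alternatively, and more cleanly, I would use that the angular transplant itself lowers the energy. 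The cleanest route is to compute the Rayleigh quotient of the transplanted $\chi_RF$ in $U^{\theta'}$ to first order in $\theta-\theta'$. Angular compression increases $|\partial_\varphi|^2/r^2$ by the factor $(\theta/\theta')^2$ but multiplies the area by $\theta'/\theta$. One therefore needs $\iint(\partial_r F)^2$ to dominate $\iint r^{-2}(\partial_\varphi F)^2$ with a gain that is not $o(1)$ relative to $1/\log R$.

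\textbf{Step 3: linear independence and the cross terms.} The cross terms between $E$ and $\chi_R\cF'$ tend to zero as $R\to\infty$. This follows because $E$ decays exponentially and $F$ solves the equation, so $n^\theta(e,\chi_RF)+(e,\chi_RF)\to n^\theta(e,F)+(e,F)=(\lambda_e+1)(e,F)$, and this expression is handled together with the $E$-block. Linear independence of $\chi_R\cF'$ modulo $E$ for large $R$ holds because elements of $\cF'$ are not in $L^2$, whereas $E\subset L^2$. Elements of $\cF'$ are indeed not in $L^2$: an $L^2$ solution would be an eigenvalue of $N^\theta$ at $-1$, and this is excluded by the Ursell/\cite{kmz} description combined with a Lemma~\ref{lem32a}-type argument. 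The $\chi_R\cF'$-block of the quadratic form is $o(1)$, while $\|\chi_RF\|^2\to\infty$. After the angular perturbation the block must become strictly negative, uniformly.

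\textbf{Main obstacle.} The hardest step is producing a strictly negative contribution after the $\theta\to\theta'$ transplantation that beats the $O(1/\log R)$ cutoff error for non-$L^2$ functions $F$. I would try a one-parameter test family $\chi_RF\circ\Phi_{\theta'}$ and optimize $R$ against $\theta-\theta'$ (for instance $R\to\infty$ first, then $\theta'\to\theta$ fails, so use $\theta'$ depending on $R$). The fallback is to modify the test function by adding $\eps$ times a compactly supported corrector that exploits the non-zero normal derivative mismatch created by the rotation of the Neumann side, which gives a gain linear in $\eps$ against the $\eps^2$ cost.
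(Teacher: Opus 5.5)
Your overall strategy (a test space built from $\kappa(\theta)$ functions below threshold plus $\chi_R\cF'$, transported to $U^{\theta'}$, then min-max) is the right one. However, there is a genuine gap exactly at what you call the main obstacle: the proposal never shows that $n^{\theta'}+I$ is strictly negative on the transported block $\chi_R\cF'$, and the angular transplant you propose cannot do this as set up.

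With $s=\theta'/\theta$ and $\tilde u(r,\varphi)=u(r,\varphi/s)$, one has exactly (norms on $U^\theta$, polar coordinates)
\[
(n^{\theta'}+I)(\tilde u,\tilde u)=(n^\theta+I)(u,u)-(1-s)\Big(\|\partial_r u\|^2+\|u\|^2-\|r^{-1}\partial_\varphi u\|^2\Big)+\dfrac{(1-s)^2}{s}\,\|r^{-1}\partial_\varphi u\|^2 .
\]
Near $\Sigma$ and for large $r$, this map compresses the transverse profile $e^{-y}\mapsto e^{-y/s}$. That profile is no longer the Robin ground state on the half-line, and it costs $\frac{(1-s)^2}{2s}$ per unit length. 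But $e^{-y}$ is precisely the non-$L^2$ part of the actual elements of $\cF^N_\theta$, the Ursell functions $a_ne^{-y}+\Tilde\Phi_n$.

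For $u=\chi_RF$ the last term is therefore of order $(1-s)^2\|\chi_RF\|^2$, which is unbounded in $R$. The first-order term, by contrast, is $O(1-s)$ with a bounded coefficient, and it must also beat the cutoff error $\iint|\nabla\chi_R|^2F^2$. With your logarithmic cutoff these two requirements cannot be met simultaneously for large $R$, whatever coupling $\theta'=\theta'(R)$ you choose. Your heuristic criterion that $\iint(\partial_rF)^2$ should dominate $\iint r^{-2}(\partial_\varphi F)^2$ also fails for these $F$, since $|r^{-1}\partial_\varphi F|\approx|\nabla F|$ along $\Sigma$. The corrector fallback is not developed, so Step~2 remains open.

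The missing idea is a deformation that leaves the $y$-profile untouched. Use the dilation $(x,y)\mapsto(\lambda x,y)$ with $\lambda=\tan\theta/\tan\theta'>1$, and the unitary map $\Theta u(x,y)=\lambda^{-1/2}u(x/\lambda,y)$ from $L^2(U^\theta)$ onto $L^2(U^{\theta'})$. It preserves the $L^2$ norm, $\|\partial_yu\|^2$ and the Robin term, so
\[
n^{\theta'}(\Theta u,\Theta u)=n^\theta(u,u)-\delta\iint_{U^\theta}(\partial_xu)^2\,\dxy,\qquad \delta=1-\lambda^{-2}>0.
\]
This is a pure gain with no error term, so you can fix any $\theta'<\theta$ and only send $R\to\infty$. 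The gain is nontrivial because $F\mapsto\partial_xF$ is injective on $\cF^N_\theta$: if $\partial_xF=0$, boundedness forces $F=ce^{-y}$, which violates the Neumann condition on $\{y=x\tan\theta\}$ since $\cos\theta\neq0$.

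To close the argument, proceed as follows.
\begin{itemize}
\item[(i)] Replace the eigenfunctions $E$ by a $\kappa$-dimensional space $V$ of functions with bounded support satisfying $(n^\theta+I)(v,v)\le-\mu\|v\|^2$. Then $(n^\theta+I)(v,\chi_RF)=0$ exactly for large $R$, and $V\cap\chi_R\cF'=\{0\}$ follows at once. You therefore need neither $F\notin L^2$ nor any information about eigenvalues at $-1$.
\item[(ii)] Measure the gain on a fixed compact set $K=U^\theta\cap B_{R_0}(0)$, not relative to $\|\chi_RF\|^2$. By finite dimensionality, $\|F\|_\infty^2\le c_1\|F\|^2_{L^2(K)}$ and
\[
\mu\|v\|^2+\delta\|\partial_x(v+F)\|^2_{L^2(K)}\ge c_2\big(\|v\|^2+\|F\|^2_{L^2(K)}\big).
\]
\item[(iii)] The cutoff error $\eps_R\|F\|_\infty^2$, with $\eps_R=\iint|\nabla\chi_R|^2\to0$, is absorbed once $c_1\eps_R<c_2$.
\item[(iv)] Finite dimensionality of $V+\chi_R\cF'$ for that fixed $R$ turns this into $(n^{\theta'}+I)(w',w')\le-c\|w'\|^2$ on the transported space.
\end{itemize}
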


\begin{proof}
Let $0<\theta'<\theta< \frac{\pi}{2}$ and $m\in\NN$. Denote
\[
\kappa:=\kappa(\theta),\quad \cF:= \text{any $m$-dimensional subspace of }\cF^N_\theta.
\]	
We are going to show that $\kappa(\theta')\ge \kappa+m$, i.e. that the operator $N^{\theta'}$ has at least
$\kappa+m$ eigenvalues in $(-\infty,-1)$. This will be done
by constructing a $(\kappa+m)$-dimensional test subspace for the min-max principle
by using the suitable truncations  and dilation of the functions in $\cF$, which is similar to
the approach used for broken waveguides, see e.g. \cite[Prop.~11]{p17} and uses the monotonicity with respect to the angle
as in \cite[Sec.~3.3]{khp}.

The main idea of the construction is that
$U^{\theta'}$ is obtained from $U^\theta$ by applying the dilation in $x$-direction
\[
(x,y)\mapsto (\lambda x,y),\quad \lambda:=\dfrac{\tan \theta}{\tan\theta'}>1.
\]
Consider the respective unitary transform
\begin{equation}
	\label{eq-dilat}
\Theta: L^2(U^{\theta})\to L^2(U^{\theta'}),\quad (\Theta u) (x,y):=\dfrac{1}{\sqrt{\lambda}}u\big(\dfrac{x}{\lambda},y\big),
\end{equation}
then we have obviously $H^1(U^{\theta'})=\Theta H^1(U^\theta)$, and for any $u\in H^1(U^{\theta})$ it holds that
\begin{align*}
n^{\theta'}(\Theta u,\Theta u)&=\dfrac{1}{\lambda}\iint_{\substack{0<y<x\tan \theta'}} \bigg[\Big(\partial_x\big[ u\big(\dfrac{x}{\lambda},y\big)\big]\Big)^2+\Big(\partial_y \big[u\big(\dfrac{x}{\lambda},y\big)\big]\Big)^2\bigg]\dxy\\
&\qquad -\dfrac{1}{\lambda}\int_0^\infty u\big(\dfrac{x}{\lambda},0\big)^2\dd x\\
&=\dfrac{1}{\lambda}\iint_{0<y<\dfrac{x}{\lambda}\tan \theta} \bigg[\dfrac{1}{\lambda^2} (\partial_x u)\big(\dfrac{x}{\lambda},y\big)^2+(\partial_y u)\big(\dfrac{x}{\lambda},y\big)^2\bigg]\dxy\\
&\qquad -\dfrac{1}{\lambda}\int_0^\infty u\big(\dfrac{x}{\lambda},0\big)^2\dd x\\
(x:=\lambda z):\quad &=\iint_{0<y<z\tan \theta} \bigg[\dfrac{1}{\lambda^2} (\partial_x u)(z,y)^2+(\partial_y u)(z,y)^2\bigg]\dd z\,\dd y-\int_0^\infty u (z,0)^2\dd z,
\end{align*}
therefore,
\begin{equation}
	\label{nti1}
	n^{\theta'}(\Theta u,\Theta u)=n^\theta(u,u)-\delta \iint_{U^\theta}(\partial_x u)^2\dxy,\quad \delta:=1-\dfrac{1}{\lambda^2}>0.
\end{equation}

Let $\mu\in \big(0,-1-\cE_\kappa(\theta)\big)$, then $E_\kappa(N^\theta+I)\equiv \cE_\kappa(\theta)+1<-\mu$, and by the min-max principle and the density of functions with bounded supports in $H^1(U^\theta)$, see e.g. \cite[Thm.~1.4.2.1]{grisvard}, we find a $\kappa$-dimensional subspace
$V\subset H^1(U^\theta)$ and $\rho>0$
such that
\[
	(n^\theta+I)(v,v)\le -\mu \|v\|^2_{L^2(U^\theta)},\quad \supp v\subset U^\theta\cap B_\rho(0),\quad v\in V.
\]

Note that for any $R>0$ the map
\[
\cF\ni F\mapsto \chi_R F\in C^0(U^\theta)
\]
is injective: If $\chi_R F=0$, then $F=0$ in $B_R(0)$ and the unique continuation yields $F\equiv 0$.
It follows that for any $R>0$ we have
\[
\dim (\chi_R \cF)=\dim \cF=m,\quad
\chi_R \cF:=\big\{\chi_R F: \ F\in \cF\}.
\]

Choose $R_0>\max\{1,\rho\}$, and for $R>R_0$ consider the
subspace
\[
W_R:=V+\chi_R\cF\equiv \big\{v+\chi_R F:\ v\in V,\ F\in \cF\big\}.
\]
Let $v\in V$ and $F\in\cF$. By construction $\chi_R F=F$ on $\supp v$ for any $v\in V$, and the weak formulation~\eqref{FFF-aa2} with $\varphi:=v$ reads as $(n^\theta+I)(v,\chi_R F)=0$, 
which shows, in particular, that $V$ and $\chi_R \cF$ are linearly independent. Namely, if $\chi_R F\in V$ for some $F\in\cF$, then
taking $v:=\chi_R F$ gives $0=(n^\theta+I)(\chi_R F,\chi_R F)\le -\mu\|\chi_R F\|^2_{L^2(U^\theta)}$ and implies first $\chi_R F=0$ and then $F=0$. Therefore,
\begin{gather*}
V\times\cF\ni (v,F)\mapsto v+\chi_R F\in W_R \text{ is injective for } R>R_0,\\
\dim W_R=\dim V+\dim \cF=\kappa+m.
\end{gather*}
The identity \eqref{uuff} gives
\begin{align*}
	(n^\theta+I)(\chi_R F,\chi_R F)&=\iint_{U^\theta}|\nabla\chi_R|^2 F^2\,\dxy\le \eps_R \|F\|^2_{L^\infty(U^\theta)},\\
	\eps_R&:=\iint_{U^\theta}|\nabla\chi_R|^2\,\dxy\xrightarrow[\text{by \eqref{chinabla}}]{R\to\infty}0,
\end{align*}
To summarize,
\begin{equation}
	\label{nti2}
\begin{aligned}
(n^\theta+I)(v+\chi_R F,v+\chi_R F)&=(n^\theta+I)(v,v)+(n^\theta+I)(\chi_R F,\chi_R F)\\
&\le -\mu\|v\|_{L^2(U^\theta)}^2+\eps_R \|F\|^2_{L^\infty(U^\theta)}.
\end{aligned}
\end{equation}

Now use the dilation $\Theta$ from \eqref{eq-dilat} and consider the $(\kappa+m)$-dimensional subspace
\[
W'_R:=\Theta W_R\subset H^1(U^{\theta'}).
\]
Represent any function $w'\in W'_R$ as
\[
w'=\Theta w \in W'_R,\quad  w=v+\chi_R F\in W_R,\quad v\in V,\quad F\in\cF,
\]
then one obtains from \eqref{nti1} and \eqref{nti2}:
\begin{equation}
	\label{nti3}
\begin{aligned}
	(n^{\theta'}+&I)(w',w')\stackrel{\eqref{nti1}}{=}(n^{\theta}+I)(w,w)-\delta\iint_{U^\theta}|\partial_x w|^2\dxy\\
	&=
	(n^\theta+I)(v+\chi_R F,v+\chi_RF)-\delta\iint_{U^{\theta}}\big|\partial_x(v+\chi_RF)\big|^2\dxy\\
	&\le -\mu\|v\|_{L^2(U^\theta)}^2+\eps_R \|F\|^2_{L^\infty(U^\theta)}-\delta\iint_{U^{\theta}}\big|\partial_x(v+\chi_R F)\big|^2\dxy.
\end{aligned}
\end{equation}

We claim:
	\begin{equation}
		\label{jmap}
	\text{The map }\cF\ni F\mapsto \partial_x F \in C^0(U^\theta) \text{ is injective}.
	\end{equation}
Namely, let $F\in\cF$ with $\partial_x F=0$, then $F(x,y)=f(y)$ with some function $f$. From $-\Delta F=-F$	one obtains $-f''=-f$, and $F\in L^\infty(U^\theta)$ gives $f(y)=ce^{-y}$, $c\in\RR$. The resulting function $F:\,(x,y)\mapsto ce^{-y}$ clearly satisfies the Robin boundary condition on $\Sigma$, while Neumann boundary condition $\partial_\nu F=0$ on
the half-line $y=x\tan\theta$ takes the form
\[
0=(-\sin\theta,\cos\theta)\cdot \nabla F(x,y)\equiv (-\sin\theta,\cos\theta)\cdot \big(0,f'(y)\big)=-c\cos(\theta)e^{-y},
\]
and due to $\theta<\frac{\pi}{2}$ we have $c=0$ and then $f\equiv 0$ and, subsequently, $F=0$. Hence, \eqref{jmap} holds true.

In view of \eqref{jmap} and the unique continuation, for
\[
 K:=U^\theta\cap B_{R_0}(0)
\]
we also have
\begin{equation}
	\label{jmap2}
	\text{The maps }\cF\ni F\mapsto \partial_x F \in L^2(K),\ \cF\ni F\mapsto F \in L^2(K)
	 \text{ are injective}.
\end{equation}
As $\cF$ is finite-dimensional, one can find $c_1>0$ with 
\begin{equation*}
\|F\|^2_{L^\infty(U^\theta)}\le c_1 \|F\|^2_{L^2(K)},\quad F\in \cF.
\end{equation*}
Further note that the quadratic forms
\begin{align}
V\times\cF\ni (v,F)&\mapsto \|v\|^2_{L^2(U^\theta)}+\|F\|^2_{L^2(K)},\nonumber\\
V\times\cF\ni (v,F)&\mapsto \mu \|v\|^2_{L^2(U^\theta)}+\delta \iint_{K}|\partial_x(v+F)|^2\dxy \label{qf1}
\end{align}
are positive-definite: For the first one it is obvious, and if the right-hand side of \eqref{qf1} is zero, then $v=0$, thus $\|\partial_x F\|^2_{L^2(K)}=0$, which implies $F=0$ by \eqref{jmap2}. As both $V$ and $\cF$ are finite-dimensional,
one can find a constant $c_2>0$ such that for all $(v,F)\in V\times \cF$ one has
\[
\mu \|v\|^2_{L^2(U^\theta)}+\delta\iint_{K}|\partial_x(v+F)|^2\dxy\ge c_2 \Big(\|v\|^2_{L^2(U^\theta)}+\|F\|^2_{L^2(K)}
\Big).
\]

Substitute the last inequality into \eqref{nti3} and pick $R>R_0$ with $c_1\eps_R<c_2$, then
\[
c_3:=c_2-c_1\eps_R>0,\quad
	(n^{\theta'}+I)(w',w')\le -c_3\big( \|v\|^2_{L^2(U^\theta)}+\|F\|^2_{L^2(K)}\big).
\]
We use the finite-dimensionality of $W_R$ (with the chosen $R$) to find $c_4>0$ with
\[
\|v\|^2_{L^2(U^\theta)}+\|F\|^2_{L^2(K)}\ge c_4\|v+\chi_R F\|^2_{L^2(U^\theta)},\quad (v,F)\in V\times\cF,
\]
which results in
\[
	(n^{\theta'}+I)(w',w')\le -c_3c_4\|v+\chi_R F\|^2_{L^2(U^\theta)}\equiv -c_3c_4\|w\|^2_{L^2(U^\theta)}\equiv
	-c_3c_4\|w'\|^2_{L^2(U^{\theta'})}.	
\]
The min-max principle shows that the operator $N^{\theta'}+I$ has at least $\kappa+m$ negative eigenvalues, i.e. $\kappa(\theta')\ge \kappa+m$.
\end{proof}

\begin{proof}[\bf Proof of Lemma~\ref{thm31}]
The case $\theta\ge \frac{\pi}{2}$ is already covered by Lemma~\ref{lem32a}. Now let $\theta<\frac{\pi}{2}$.	
 If $\kappa$ is continuous at $\theta$ (and $\theta$ is non-critical), then the right-hand side of \eqref{ftheta} is zero, hence,
$\cF^N_\theta=\{0\}$. The jump of $\kappa$ at each critical $\theta_n$ is $1$, see~\eqref{kjump},
and Lemma~\ref{lem32} yields
$\dim\cF^N_{\theta_n}\le 1$. On the other hand, $\dim\cF^N_{\theta_n}\ge 1$ by the explicit computations
in Remark~\ref{rem26}. 
\end{proof}

\section{Non-resonant angles and threshold resonances}\label{sec4}

In this section we establish the equivalence (A)$\iff$(B) in Theorem~\ref{thm-main} by proving:
\begin{theorem}\label{thm-sec4}
A half-angle $\theta\in(0,\frac{\pi}{2})$ is non-resonant in the sense of \eqref{eq-nonres} if and only if the 
Robin Laplacian in the infinite sector $\cS_\theta$ has \emph{no} threshold resonance.
\end{theorem}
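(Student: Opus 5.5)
The plan is to prove the two implications separately, with (A)$\Rightarrow$(B) by contraposition. By Theorem~\ref{thm-sec3}, a threshold resonance exists exactly for $\theta=\theta_n$, $n\ge1$, and then $\cF_{\theta_n}=\RR\Psi_n$.

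\textbf{(A)$\Rightarrow$(B).} Assume $\theta=\theta_n$. I will build a $(\kappa(\theta)+1)$-dimensional test space for $\Tilde N^r_\theta$ spanned by $\psi_{1,\theta},\dots,\psi_{\kappa,\theta}$ and $\Psi_n$, all restricted to $\cS^r_\theta$.

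1. For the eigenfunctions $\psi_{k,\theta}$, the Robin condition holds on $OA^\pm_r$. The only error comes from the Neumann sides $A^\pm_rB_r$, where $\psi_{k,\theta}$ and its gradient are $O(e^{-cr})$.
2. For $\Psi_n$, I use the representation \eqref{urssely} in the $(y_1,y_2)$ coordinates attached to each boundary ray; by symmetry it suffices to treat the upper half. The side $A^+_rB_r$ is $\{y_1=r\}$, and there $\partial_\nu\Psi_n=\partial_{y_1}\Tilde\Phi_n=O(e^{-d_nr})$ by \eqref{urssel-decay}, because $a_ne^{-y_2}$ does not depend on $y_1$. Green's formula on $\cS^r_\theta$ then gives $(\tilde n^r+1)(\Psi_n,\Psi_n)=O(e^{-cr})$.
3. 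In contrast, $\|\Psi_n\|^2_{L^2(\cS^r_\theta)}\ge c\,r$, since the $e^{-y_2}$ term has unit transverse mass along a length $r$.
4. The cross terms $(\tilde n^r+1)(\psi_k,\Psi_n)$ and $(\psi_k,\Psi_n)_{L^2}$ are also $O(e^{-cr})$. This follows from Green's formula, the equations satisfied by both functions, and the exponential decay of $\psi_k$.
5. The Gram matrices are therefore exponentially close to block diagonal, with the $\psi_k$ block giving values $<-1$ and the $\Psi_n$ entry giving $\le -1+O(e^{-cr}/r)$.

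The min-max principle then yields $E_{\kappa+1}(\Tilde N^r_\theta)\le -1+O(e^{-cr})$, which contradicts \eqref{eq-nonres}.

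\textbf{(B)$\Rightarrow$(A).} Assume $\theta\neq\theta_n$ and suppose \eqref{eq-nonres} fails. Then there are $r_j\to\infty$ with $\lambda_j:=E_{\kappa+1}(\Tilde N^{r_j}_\theta)+1\le \eps_j r_j^{-2}$ and $\eps_j\to0$; note $\lambda_j\ge -1-\cE_\kappa$ is impossible to be very negative, since the first $\kappa$ eigenvalues converge to the $\cE_k$, so $\lambda_j\to0$. Let $u_j$ be the corresponding eigenfunctions.

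1. \emph{Normalization and limit.} Normalize by $\|u_j\|_{L^\infty(\cS^{r_j}_\theta)}=1$; boundedness is ensured by Proposition~\ref{prop-nittka}. Proposition~\ref{prop-nittka} applied locally and Proposition~\ref{calzyg} give uniform $H^1$ and $H^2$ bounds on compacts. A diagonal subsequence then converges locally to some $F\in L^\infty(\cS_\theta)$ with $\|F\|_\infty\le1$, and $F$ is a weak solution of \eqref{dff1}. By (B), $F\equiv0$, so it remains to show $F\not\equiv0$.
2. \emph{Outer region.} For $\rho\le y_1\le r_j$ the domain is a half-strip near each ray, with Robin condition at $y_2=0$, Neumann condition at $y_1=r_j$, and far from the other ray. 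I project $u_j$ onto the transverse ground state $\sqrt2e^{-y_2}$ of the half-line Robin operator, which is separated from the rest of the transverse spectrum by a gap.
   - The orthogonal remainder then satisfies an equation with a spectral gap, so it decays exponentially away from the corner and from the central part of $\cS^r_\theta$.
   - The coefficient $g_j(y_1)$ satisfies, up to exponentially small errors, $-g_j''=\lambda_jg_j$ on $(\rho,r_j)$ with $g_j'(r_j)\approx0$. Since $\lambda_jr_j^2\to0$, this forces $g_j$ to be almost constant on $[\rho,r_j]$, so $\sup|g_j|\le(1+o(1))|g_j(\rho)|+$ exponentially small terms.
3. \emph{Where the supremum sits.} Combining step 2 with the exponential decay of the remainder, the supremum $1$ of $|u_j|$ is comparable to the values of $u_j$ on the fixed compact $\cS_\theta\cap B_{\rho'}(0)$. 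Hence $\|u_j\|_{L^\infty(B_{\rho'})}\ge c>0$ uniformly in $j$. Local uniform convergence, via the H\"older bound in Proposition~\ref{prop-nittka}, then gives $F\not\equiv0$, contradicting (B).

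The main obstacle is step 2: making the one-dimensional reduction in the outer region quantitative, following the waveguide ideas of \cite{bakh}. It is there that the scale $r^{-2}$ in \eqref{eq-nonres} enters. First I would try to write the exact ODE for $g_j$ obtained by testing the equation against $e^{-y_2}\varphi(y_1)$. Next I would bound the error terms, coming from the oblique Neumann edge and the other half of the domain, by exponentially decaying quantities. If the pointwise argument becomes too delicate, a fallback is to exploit that the eigenvalue equation holds for the orthogonal complement and use an energy (Agmon-type) estimate.
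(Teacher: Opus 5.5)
Your direction (A)$\Rightarrow$(B) is correct and is essentially Lemma~\ref{prop41a}. The paper uses compactly supported test functions on the half-triangle, so its cross terms vanish exactly; your exact eigenfunctions on the full quadrangle only add $O(e^{-cr})$ terms.

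For (B)$\Rightarrow$(A) your architecture is also the paper's, but the step you postpone is where the proof lies, and as stated it does not close. Your steps~2--3 take for granted that $u_j$ and $\nabla u_j$ are small near the bisector $OB_{r_j}$, outside a fixed neighbourhood of $O$, \emph{uniformly in $j$}. Without this, two things go wrong:
\begin{itemize}
\item the supremum $1$ of $|u_j|$ may be attained near the bisector;
\item the truncation source, which is supported there, is not small, so the orthogonal remainder is not controlled.
\end{itemize}
Under the mere normalization $\|u_j\|_\infty=1$, a standard Agmon argument does not give this. With weight $e^{\gamma y_2}$ the Robin term $\int_\Sigma u_j^2\,\dd s$ can be of size $r_j$, and nothing prevents this for a threshold-like eigenfunction spread along the whole edge. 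One then only gets bounds of the form $\sqrt{r_j}\,e^{-cy_1}$, which are useless in the range $\rho<y_1<C\log r_j$.

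The paper's remedy is the weight $e^{\gamma y}/(1+\gamma x)$ with $\gamma<1/\sqrt2$. Because $|u_j|\le 1$, its Robin term is at most $\int_0^\infty(1+\gamma x)^{-2}\,\dd x=1/\gamma$, which gives the $j$-uniform bound \eqref{eq-agmon1}. From this the paper derives:
\begin{itemize}
\item uniform smallness of $u_j$ along the bisector (Lemma~\ref{lem410}), using $F=0$ near $O$ and Lemma~\ref{lem-upper0} near the corner where the bisector meets the Neumann side;
\item the uniform weighted bound \eqref{eq-agmon2} on the truncation source, and $\|f_j\|_{L^2}\to 0$.
\end{itemize}
Then $\|g_j\|_\infty\le C\int_0^{r_j}t|h_j(t)|\,\dd t\to0$ follows from $|G_{j,\mu_j}(x,t)|\le C\min\{x,t\}$. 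Finally, $\|w_j\|_\infty\le C\|f_j\|_{L^2}\to0$ follows from the $j$-uniform $H^2$ estimate of Lemma~\ref{lemh21}.

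Two smaller points. First, $\lambda_j\to0$ does not follow from the convergence of the first $\kappa$ eigenvalues. That convergence does not exclude $E_{\kappa+1}(\Tilde N^{r_j}_\theta)$ staying below $-1-\delta$, in which case your limit $F$ would solve the equation at a level below $-1$ rather than \eqref{dff1}. You need the separate lower bound \eqref{lest1} from \cite{kobp}, combined with \eqref{decomp1} and \eqref{eq-dr}, or an IMS localization argument. Second, to pass to the limit in the Robin term you need local $H^1$ bounds up to the boundary. These come from a Caccioppoli argument with the trace inequality (Lemma~\ref{corol-grad}), not from Proposition~\ref{prop-nittka} or Proposition~\ref{calzyg}.
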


\subsection{Reduction by parity}

Recall that the non-resonance condition is an eigenvalue bound for the Robin-Neumann Laplacian $\Tilde N^r_\theta$
on the quadrangle $OA^+_rB_rA^-_r$ shown in Fig.~\ref{figure-oab}. Namely, the operator $\Tilde N^r_\theta$
acts as $u\mapsto-\Delta u$ on the functions $u$ satisfying the Robin boundary condition $\partial_\nu u=u$ on the sides $OA^\pm_r$ and Neumann boundary condition on the sides $A^\pm_rB_r$, and the half-angle $\theta$ is non-resonant
if for some $c>0$ one has
\begin{equation}
	\label{eq-nonres2}
	E_{\kappa(\theta)+1}(\Tilde N^r_\theta)\ge -1+\dfrac{c}{r^2} \text{ for all sufficiently large }r.
\end{equation}
We further recall that by \cite[Lem.~3.6]{kobp} one has
\begin{equation}
	\label{eq-low}
	E_j(\Tilde N^r_\theta)\xrightarrow{r\to\infty}\cE_j(\theta)<-1,\quad j=1,\dots,\kappa(\theta).
\end{equation}

\begin{figure}
	\centering
	
	\begin{minipage}{0.4\textwidth}\centering
		\scalebox{0.8}{\begin{tikzpicture}[x=0.75pt,y=0.75pt,yscale=-1,xscale=1]
				
				\draw  [color={rgb, 255:red, 0; green, 0; blue, 0 }  ,draw opacity=1 ][fill={rgb, 255:red, 155; green, 155; blue, 155 }  ,fill opacity=0.55 ][line width=2.25]  (240.4,119.2) -- (212.4,176.4) -- (80.8,119.8) -- (214,62) -- cycle ;
				\draw  [draw opacity=0] (139.27,94.74) .. controls (142.57,102.43) and (144.4,110.9) .. (144.4,119.8) .. controls (144.4,128.49) and (142.66,136.78) .. (139.5,144.33) -- (80.8,119.8) -- cycle ; \draw  [color={rgb, 255:red, 74; green, 74; blue, 74 }  ,draw opacity=1 ] (139.27,94.74) .. controls (142.57,102.43) and (144.4,110.9) .. (144.4,119.8) .. controls (144.4,128.49) and (142.66,136.78) .. (139.5,144.33) ;  
				\draw  [fill={rgb, 255:red, 0; green, 0; blue, 0 }  ,fill opacity=1 ] (237.4,119.2) .. controls (237.4,117.54) and (238.74,116.2) .. (240.4,116.2) .. controls (242.06,116.2) and (243.4,117.54) .. (243.4,119.2) .. controls (243.4,120.86) and (242.06,122.2) .. (240.4,122.2) .. controls (238.74,122.2) and (237.4,120.86) .. (237.4,119.2) -- cycle ;
				\draw  [fill={rgb, 255:red, 0; green, 0; blue, 0 }  ,fill opacity=1 ] (78.8,119.8) .. controls (78.8,118.14) and (80.14,116.8) .. (81.8,116.8) .. controls (83.46,116.8) and (84.8,118.14) .. (84.8,119.8) .. controls (84.8,121.46) and (83.46,122.8) .. (81.8,122.8) .. controls (80.14,122.8) and (78.8,121.46) .. (78.8,119.8) -- cycle ;
				\draw  [fill={rgb, 255:red, 0; green, 0; blue, 0 }  ,fill opacity=1 ] (211,62) .. controls (211,60.34) and (212.34,59) .. (214,59) .. controls (215.66,59) and (217,60.34) .. (217,62) .. controls (217,63.66) and (215.66,65) .. (214,65) .. controls (212.34,65) and (211,63.66) .. (211,62) -- cycle ;
				\draw  [fill={rgb, 255:red, 0; green, 0; blue, 0 }  ,fill opacity=1 ] (209.4,176.4) .. controls (209.4,174.74) and (210.74,173.4) .. (212.4,173.4) .. controls (214.06,173.4) and (215.4,174.74) .. (215.4,176.4) .. controls (215.4,178.06) and (214.06,179.4) .. (212.4,179.4) .. controls (210.74,179.4) and (209.4,178.06) .. (209.4,176.4) -- cycle ;
				\draw  [dash pattern={on 0.75pt off 0.75pt}] (207.37,162.58) -- (216.8,166.98) -- (212.4,176.4) -- (202.98,172) -- cycle ;
				\draw  [dash pattern={on 0.75pt off 0.75pt}] (213.77,61.37) -- (218.21,70.77) -- (208.8,75.21) -- (204.37,65.8) -- cycle ;
				\draw    (81.8,119.8) -- (240.4,119.2) ;
				
				\draw (147.6,95) node [anchor=north west][inner sep=0.75pt]    {$\theta $};
				\draw (147.6,124.6) node [anchor=north west][inner sep=0.75pt]    {$\theta $};
				\draw (221.2,48.8) node [anchor=north west][inner sep=0.75pt]    {$A_{r}^{+}$};
				\draw (218.8,170.38) node [anchor=north west][inner sep=0.75pt]    {$A_{r}^{-}$};
				\draw (242.4,122.6) node [anchor=north west][inner sep=0.75pt]    {$B_{r}$};
				\draw (151.6,66.4) node [anchor=north west][inner sep=0.75pt]    {$r$};
				\draw (148.6,159) node [anchor=north west][inner sep=0.75pt]    {$r$};
				\draw (71.6,96.8) node [anchor=north west][inner sep=0.75pt]    {$O$};

			\end{tikzpicture}
		}
	\end{minipage}
	\begin{minipage}{0.55\textwidth}\caption{The quadrangle $OA^+_rB_rA^-_r$.}\label{figure-oab}\end{minipage}
\end{figure}

Using the reduction by parity (with respect to the diagonal $OB_r$)
one easily shows
\begin{equation}
	\label{decomp1}
\Tilde N^r_\theta\simeq N^r_\theta\oplus D^r_\theta,
\end{equation}
where $N^r_\theta$ and $D^r_\theta$ are the operators in the triangle $OA^+_r B_r$
acting as $u\mapsto-\Delta u$ on the functions $u$ satisfying
\begin{align*}
	\partial_\nu u&=u \text{ on }OA^+_r \text{ for both $N^r_\theta$ and $D^r_\theta$},\\
	\partial_\nu u&=0 \text{ on }A^+_r B_r\text{ for both $N^r_\theta$ and $D^r_\theta$},\\
	\partial_\nu u&=0 \text{ on }OB_r \text{ for } N^r_\theta,\\
	u&=0 \text{ on }OB_r \text{ for } D^r_\theta.
\end{align*}
It is shown in \cite[Prop.~3.10]{kobp},
\begin{equation}
\text{for any }C_D\in\big(0,\frac{\pi^2}{4}\big):\quad
D^r_\theta\ge -1+\dfrac{C_D}{r^2} \text{ for all sufficiently large }r,
\label{eq-dr}
\end{equation}
and the decomposition \eqref{decomp1} together with \eqref{eq-low} show that the non-resonance condition \eqref{eq-nonres2}
is equivalent to the existence of $C>0$ with
\begin{equation}
	\label{eq-nonres3}
	E_{\kappa(\theta)+1}(N^r_\theta)\ge -1+\dfrac{C}{r^2} \text{ for all sufficiently large }r.
\end{equation}

Similarly to Section~\ref{sec3} it will be convenient to put the Robin  boundary
on one of the coordinate axes. We drop the dependence on the angle $\theta\in(0,\frac{\pi}{2})$ as it remains unchanged
from now on, and for $r>0$ consider the sets
\begin{align*}
U&:=\big\{(x,y):\ x>0,\ 0<y<x\tan\theta\big\},\\
\Sigma&:=\big\{(x,y):\ x>0,\ y=0\big\}\subset\partial U &&\text{ (Robin boundary of $U$),}\\
U_r&:=U\cap\{x<r\},\\
\Sigma_r&:=\big\{(x,y):\ x\in (0,r),\ y=0\big\}\subset\partial U_r &&\text{ (Robin boundary of $U_r$),}
\end{align*}	
and denote
\begin{equation}
	\label{eq-ar}
	A_r:=(r,r\tan\theta)
\end{equation}
the upper right corner of $U_r$, see Fig.~\ref{fig-usr}.
Further let $N_r$ be the Laplacian $u\mapsto -\Delta u$ in $L^2(U_r)$ with Robin boundary condition  $\partial_\nu u=u$ on  $\Sigma_r$ and Neumann boundary condition on the remaining parts of boundary i.e. on $\{y=x\tan\theta\}$ and $\{x=r\}$, and we denote
\[
\kappa:=\kappa(\theta)\ge 1.
\]
By construction, the operator $N_r$ is unitarily equivalent to the above $N^r_\theta$, so the non-resonance conditions
\eqref{eq-nonres2} and \eqref{eq-nonres3} are equivalent
to the existence of $C>0$ with
\begin{equation}
	\label{eknr}
	E_{\kappa+1}(N_r)\ge -1+\dfrac{C}{r^2} \text{ for all sufficiently large }r.
\end{equation}

\begin{figure}
	\centering
	
	\begin{minipage}{0.45\textwidth}\centering
		\scalebox{0.8}{\begin{tikzpicture}[x=0.75pt,y=0.75pt,yscale=-1,xscale=1]
				
				\draw [color={rgb, 255:red, 155; green, 155; blue, 155 }  ,draw opacity=1 ] (66.7,139.67) -- (340.83,139.67)(79.17,53) -- (79.17,155.27) (333.83,134.67) -- (340.83,139.67) -- (333.83,144.67) (74.17,60) -- (79.17,53) -- (84.17,60)  ;
				\draw  [draw opacity=0][fill={rgb, 255:red, 155; green, 155; blue, 155 }  ,fill opacity=0.42 ][line width=2.25]  (290.5,140) -- (78.6,139.07) -- (272.5,55.67) -- cycle ;
				\draw    (230.17,73.33) -- (229.67,140.67) ;
				\draw  [draw opacity=0][fill={rgb, 255:red, 155; green, 155; blue, 155 }  ,fill opacity=0.72 ][line width=2.25]  (230.5,139) -- (79.17,139.67) -- (230.17,73.33) -- cycle ;
				\draw [color={rgb, 255:red, 0; green, 0; blue, 0 }  ,draw opacity=1 ][line width=1.5]    (78.6,139.07) -- (293.17,46) ;
				\draw  [draw opacity=0] (137.07,114.01) .. controls (140.37,121.7) and (142.2,130.17) .. (142.2,139.07) .. controls (142.2,139.13) and (142.2,139.2) .. (142.2,139.27) -- (78.6,139.07) -- cycle ; \draw  [color={rgb, 255:red, 74; green, 74; blue, 74 }  ,draw opacity=1 ] (137.07,114.01) .. controls (140.37,121.7) and (142.2,130.17) .. (142.2,139.07) .. controls (142.2,139.13) and (142.2,139.2) .. (142.2,139.27) ;  
				\draw [color={rgb, 255:red, 0; green, 0; blue, 0 }  ,draw opacity=1 ][line width=1.5]    (78.6,139.07) -- (319.5,139.33) ;
				\draw  [color={rgb, 255:red, 255; green, 255; blue, 255 }  ,draw opacity=1 ][fill={rgb, 255:red, 0; green, 0; blue, 0 }  ,fill opacity=1 ] (226.83,72.83) .. controls (226.83,71.27) and (228.1,70) .. (229.67,70) .. controls (231.23,70) and (232.5,71.27) .. (232.5,72.83) .. controls (232.5,74.4) and (231.23,75.67) .. (229.67,75.67) .. controls (228.1,75.67) and (226.83,74.4) .. (226.83,72.83) -- cycle ;
				\draw [color={rgb, 255:red, 0; green, 0; blue, 0 }  ,draw opacity=1 ][line width=3]    (79.17,139.67) -- (229.67,139.67) ;
				\draw  [color={rgb, 255:red, 255; green, 255; blue, 255 }  ,draw opacity=1 ][fill={rgb, 255:red, 0; green, 0; blue, 0 }  ,fill opacity=1 ] (226.83,139.83) .. controls (226.83,138.27) and (228.1,137) .. (229.67,137) .. controls (231.23,137) and (232.5,138.27) .. (232.5,139.83) .. controls (232.5,141.4) and (231.23,142.67) .. (229.67,142.67) .. controls (228.1,142.67) and (226.83,141.4) .. (226.83,139.83) -- cycle ;
				
				\draw (330.27,114.73) node [anchor=north west][inner sep=0.75pt]  [color={rgb, 255:red, 128; green, 128; blue, 128 }  ,opacity=1 ]  {$x$};
				\draw (86.87,53.2) node [anchor=north west][inner sep=0.75pt]  [color={rgb, 255:red, 128; green, 128; blue, 128 }  ,opacity=1 ]  {$y$};
				\draw (145.4,114.27) node [anchor=north west][inner sep=0.75pt]    {$\theta $};
				\draw (258.33,94.8) node [anchor=north west][inner sep=0.75pt]    {$U$};
				\draw (297.4,117.4) node [anchor=north west][inner sep=0.75pt]    {$\Sigma $};
				\draw (285.87,48.47) node [anchor=north west][inner sep=0.75pt]    {$y=x\tan \theta $};
				\draw (184,105.8) node [anchor=north west][inner sep=0.75pt]    {$U_{r}$};
				\draw (123.33,47.8) node [anchor=north west][inner sep=0.75pt]    {$( r,r\tan \theta ) =A_{r}$};
				\draw (225,147) node [anchor=north west][inner sep=0.75pt]    {$r$};
				\draw (156.83,146) node [anchor=north west][inner sep=0.75pt]    {$\Sigma _{r}$};

			\end{tikzpicture}
		}
	\end{minipage}
	\begin{minipage}{0.5\textwidth}\caption{\small The sets $U$, $\Sigma$, $U_r$ and $\Sigma_r$.}\label{fig-usr}\end{minipage}
\end{figure}

We further recall that due to the considerations of Section~\ref{sec3}, in particular, the equality \eqref{fsum3},
one has a threshold resonance in $\cS_\theta$ if and only if there exists
a \emph{bounded} weak solution $F\not\equiv 0$ of the boundary value problem
\begin{equation}
	\label{FFF}
	\left\{
	\begin{aligned}
		-\Delta F&=-F \text{ in } U,\\
		\partial_\nu F&=F \text{ on } \Sigma,\\
		\partial_\nu F&=0 \text{ on } \{x>0,\ y=x\tan\theta\}.
	\end{aligned}\right.
\end{equation}
These observations show that Theorem~\ref{thm-sec4} becomes a consequence
of the following statement:
\begin{lemma}\label{thm41}
The non-resonance condition \eqref{eknr} fails if and only if
 the problem \eqref{FFF} has a bounded solution $F\not\equiv 0$.
\end{lemma}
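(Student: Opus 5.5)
The proof splits into the two implications. For "$\Leftarrow$" I would use an explicit test function built from the bounded solution. For "$\Rightarrow$" I would use a compactness argument along a sequence of bad radii $r_k\to\infty$; this follows the strategy for threshold resonances in waveguides \cite{bakh}.

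\emph{Bounded solution $\Rightarrow$ failure of \eqref{eknr}.} Suppose $F\not\equiv0$ is a bounded solution of \eqref{FFF}. By \eqref{fsum3}, Lemma~\ref{thm31} and Remark~\ref{rem26}, $\theta=\theta_n$ for some $n\ge1$, and $F$ is a multiple of $\Tilde\Psi_n$ restricted to $U$ (in the coordinates $(y_1,y_2)$ of Remark~\ref{rem26}, with $y_2$ the distance to the Robin side, written here as $y$). By \eqref{urssely} and \eqref{urssel-decay}, $F=a_ne^{-y}+\Phi$ with $|\Phi|+|\nabla\Phi|\le d'e^{-dx}$ and $a_n\ne0$.

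\begin{enumerate}
\item Green's formula on $U_r$ gives
\[
(n_r+I)(F,F)=\int_{\{x=r\}}F\,\partial_xF\,\dd y ,
\]
where $n_r$ is the form of $N_r$. Since $\partial_x(a_ne^{-y})=0$, the right-hand side is $O(re^{-dr})$.
\item On the other hand $\|F\|^2_{L^2(U_r)}\ge cr$ for large $r$, coming from the $e^{-y}$ part.
\item As in the proof of Lemma~\ref{lem32}, take a $\kappa$-dimensional space $V$ of functions with support in a fixed ball on which $(n+I)\le-\mu$. For large $r$ these are admissible for $N_r$, and the weak formulation gives $(n_r+I)(v,F)=0$ for $v\in V$.
\item On $V+\RR F$ we then get $(n_r+I)(w,w)\le -\mu\|v\|^2+O(re^{-dr})$ while $\|w\|^2\gtrsim \|v\|^2+r$. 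The min-max principle yields $E_{\kappa+1}(N_r)\le -1+O(e^{-dr})$, which contradicts \eqref{eknr}.
\end{enumerate}

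\emph{Failure of \eqref{eknr} $\Rightarrow$ bounded solution.} Suppose there are $r_k\to\infty$ with $E_{\kappa+1}(N_{r_k})\le -1+\eps_k r_k^{-2}$ and $\eps_k\to0$. By \eqref{eq-low} the first $\kappa$ eigenvalues stay below $-1$, so $\lambda_k:=E_{\kappa+1}(N_{r_k})\to-1$. Let $u_k$ be an associated eigenfunction, orthogonal to the first $\kappa$ ones.

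\begin{enumerate}
\item Cover $U_{r_k}$ by unit-size pieces. Up to translation and rotation these come from finitely many shapes: the corner $O$, the corner $A_{r_k}$ and the right angle at $(r_k,0)$, plus boundary and interior pieces. Proposition~\ref{prop-nittka}, applied on slightly enlarged pieces with a cutoff, gives $\|u_k\|_{L^\infty}\le C\sup_{\text{pieces}}\|u_k\|_{L^2(\text{piece})}$ with $C$ uniform in $k$.
\item Normalize $\|u_k\|_{L^\infty(U_{r_k})}=1$. The energy identity and interior/boundary $H^1$ estimates then give uniform local $H^1$ bounds.
\item Along a subsequence $u_k\to F$ in $H^1_\loc$ and weakly-$*$ in $L^\infty$. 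Passing to the limit in the weak formulation with test functions of bounded support, $F$ is a bounded weak solution of \eqref{FFF}.
\end{enumerate}

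The main obstacle is to show $F\not\equiv0$, i.e.\ that the sup-norm of $u_k$ is not lost at infinity or near the far corner. My plan is a transversal decomposition. On each segment $\{x\}\times(0,\min(x\tan\theta,\,r_k\tan\theta))$ write $u_k=g_k(x)e^{-y}+w_k$, with $w_k$ orthogonal to $e^{-y}$ in $L^2(\dd y)$. For this 1D Robin--Neumann problem the profile $e^{-y}$ is, up to exponentially small corrections for large $x$, the ground state with energy $-1$, and there is a uniform spectral gap above it.

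\begin{enumerate}
\item Feed the bound $(n_{r_k}+I)(u_k,u_k)\le\eps_kr_k^{-2}\|u_k\|^2$, together with orthogonality to the first $\kappa$ eigenfunctions, into this decomposition. I expect this to give $\int|g_k'|^2\lesssim\eps_kr_k^{-2}\int g_k^2+(\text{corner terms})$, and $w_k$ small in $H^1$ relative to $\|u_k\|$.
\item A 1D Sobolev/Poincaré argument on $(1,r_k)$ then shows that $g_k$ varies by $o(\sup|g_k|)$. Hence $|g_k(x_0)|$ at a fixed $x_0$ is comparable to $\sup|g_k|$.
\item $\sup|g_k|$ is comparable to $\|u_k\|_\infty=1$, up to the $w_k$ part, which is controlled by the elliptic estimates of step 1 of this direction.
\item Consequently $F\ne0$ near $x_0$.
\end{enumerate}

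The delicate points are controlling the corner region near $O$, where I would use orthogonality to the first $\kappa$ eigenfunctions and the gap $\mu$ as in Lemma~\ref{lem32}, and making the $\eps_k r_k^{-2}$ scale exactly match the 1D Poincaré constant on $(0,r_k)$. If the sup-normalization proves awkward, I would first try instead normalizing $\|u_k\|_{L^2(U_{r_k})}^2=r_k$.
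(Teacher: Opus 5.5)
Your proof of ``bounded solution $\Rightarrow$ failure of \eqref{eknr}'' and your construction of the limit $F$ are essentially the paper's (Lemmas~\ref{prop41a}, \ref{lem42}, \ref{lem48a}). There is one slip there: \eqref{eq-low} does not imply $\lambda_k\to-1$. For that you need the lower bound \eqref{lest1}, $E_{\kappa+1}(N_r)\ge-1-a/r^2$, which you also need later to keep $r_k^2(\lambda_k+1)$ bounded below.

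The genuine gap is the proof that $F\not\equiv0$. You want to deduce from $\int_1^{r_k}|g_k'|^2\le C\eps_k r_k^{-2}\int g_k^2+(\text{corner terms})$ that $g_k$ is almost constant on $(1,r_k)$. That step goes through $|g_k(x_2)-g_k(x_1)|\le\sqrt{r_k}\,\|g_k'\|_{L^2(1,r_k)}$, so it requires the corner terms to be $o(r_k^{-1})$.
\begin{itemize}
\item The corner terms are of the size of $\int_{U_1}(u_k^2+|\nabla u_k|^2)$. This is $O(1)$ under $\|u_k\|_\infty=1$, and nothing better under $\|u_k\|^2=r_k$.
\item Even under the contradiction hypothesis $F=0$, you only know these terms tend to zero at an unknown rate.
\item Orthogonality to the first $\kappa$ eigenfunctions does not supply the missing precision. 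It removes the overlap with the exponentially localized bound states only up to $O(e^{-cx_0})$ tails. A lower bound for the form near the corner on the complement of the bound states is a lower bound for $E_{\kappa+1}(N_{x_0})+1$. That is the non-resonance estimate at a smaller scale, which is what you are trying to prove.
\item The same $\sqrt{r_k}$ loss affects $w_k$. Smallness in $H^1$ relative to $\|u_k\|_{L^2}$, which grows like $\sqrt{r_k}$ when $g_k$ is of order one, does not give $\|w_k\|_\infty\to0$.
\end{itemize}

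The paper closes this step by a pointwise argument rather than an energy argument.
\begin{itemize}
\item Assuming $F=0$, it shows $\|u_k\|_\infty\to0$ on bounded sets up to the boundary (Lemma~\ref{lem-bound0}).
\item An Agmon estimate with weight $e^{\gamma y}/(1+\gamma x)$ extends this to every strip of fixed width along the oblique side, including near $A_{r_k}$ (Lemmas~\ref{lem-upper0}, \ref{lem410}).
\item It then cuts $u_k$ off along the oblique side and near $O$, and extends by zero to $(0,r_k)\times(0,\infty)$. This removes the oblique boundary, so the problem becomes exactly separable; your fibrewise decomposition on segments of variable length is not. The whole corner region becomes a source $f_k$ with $\|f_k\|_{L^2}\to0$ and $\int e^{2\beta x}|f_k|^2\le C$, and a Dirichlet condition appears at $x=0$.
\item The ground-state coefficient then solves $(D_k-\mu_k)g_k=h_k$, with $D_k$ the Dirichlet--Neumann Laplacian on $(0,r_k)$. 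Since $-a\le r_k^2\mu_k\le b<\pi^2/4$, the explicit Green function satisfies $|G(x,t)|\le C\min\{x,t\}$. Hence $\|g_k\|_\infty\le C\int_0^{r_k}t|h_k(t)|\,\dd t\to0$, a weighted $L^1\to L^\infty$ bound with no $\sqrt{r_k}$ loss.
\item The transverse part is bounded in $H^2$, uniformly in $k$, by $\|f_k\|_{L^2}\to0$.
\end{itemize}
Together these contradict $\|u_k\|_\infty=1$. This argument needs only $b<\pi^2/4$, not $\eps_k\to0$. It uses no orthogonality: the truncation-induced Dirichlet condition does the job you intended for ``corner control''.
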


\subsection{Outline of the proof}

The direction $\Leftarrow$ in Lemma~\ref{thm41} will be proved in a quite straightforward way.
This step  uses the facts proved in Section~\ref{sec3}, in particular, the structure
of the possible bounded solutions $F$, and we employ the standard idea that these $F$ can be used as
additional test functions for $N_r$.

\begin{lemma}\label{prop41a}
	Assume that the problem \eqref{FFF} has a bounded solution $F\not\equiv 0$, then for some $c>0$ one has
	\[
	E_{\kappa+1}(N_r)\le -1+c^{-1}e^{-cr} \text{ for }r\to\infty,
	\]
	which violates the non-resonance condition \eqref{eknr}.
\end{lemma}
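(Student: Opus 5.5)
By Theorem~\ref{thm-sec3} and the reduction \eqref{fsum3}, a bounded solution $F\not\equiv 0$ of \eqref{FFF} exists only if $\theta=\theta_n$ for some $n\ge 1$, and then $F$ is a multiple of Ursell's solution $\Psi_n$, written in the coordinates of $U$. We may therefore use the explicit representation \eqref{urssely}. In the present coordinates, with Robin boundary $\Sigma=\{y=0\}$ and distance to $\Sigma$ equal to $y$, it reads $F(x,y)=a e^{-y}+\Phi(x,y)$ with $a\ne0$. By \eqref{urssel-decay}, $|\Phi|+|\nabla\Phi|\le d' e^{-dx}$ in $U$. Likewise, the $\kappa=\kappa(\theta_n)=n$ eigenfunctions $\psi_{j,\theta}$, $j\le\kappa$, from Proposition~\ref{prop-sector}, restricted to the half-sector and rotated, give functions $v_j$ on $U$. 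These solve $-\Delta v_j=\cE_j v_j$ with the Robin/Neumann conditions of \eqref{FFF}. They are explicit sums of exponentials, so $|v_j|+|\nabla v_j|\le C e^{-d|(x,y)|}$ in $U$. The plan is to use $\vspan\{v_1,\dots,v_\kappa,F\}$, restricted to $U_r$, as a $(\kappa+1)$-dimensional test space for the min-max principle applied to $N_r+I$.

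\textbf{Step 1 (form identities on $U_r$).} All these functions are smooth up to the boundary of $U_r$, being finite sums of exponentials. Green's formula on $U_r$ is therefore legitimate, and the Robin and Neumann conditions on $\Sigma_r$ and on the oblique side kill all boundary terms except the one on the vertical side $\{x=r\}$. For $g\in\{v_1,\dots,v_\kappa,F\}$ with $-\Delta g=\lambda_g g$ ($\lambda_{v_j}=\cE_j$, $\lambda_F=-1$) and any $h$ in the span, this gives
\[
(n_r+I)(g,h)=(\lambda_g+1)\iint_{U_r} g h\,\dxy+\int_0^{r\tan\theta}\partial_x g(r,y)\,h(r,y)\,\dd y ,
\]
where $n_r$ is the form of $N_r$. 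Since $\partial_x F=\partial_x\Phi$ and all $v_j$ decay exponentially, every such boundary term is bounded by $C r e^{-dr}\le C' e^{-d'r}$. The only exception would be $h=F$ paired with $\partial_xF$; but there too the factor $\partial_x\Phi$ is exponentially small while $F$ is bounded, so the same bound holds.

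\textbf{Step 2 (Gram matrix).} The $v_j$ are orthonormal on $U$ up to normalization, and their mass outside $U_r$ is $O(e^{-dr})$. For the cross terms, $(v_j,F)_{L^2(U)}=0$. To see this, test \eqref{FFF-aa2} with the exponentially decaying $v_j$ (first truncated by $\chi_R$, then letting $R\to\infty$), and compare with the eigenvalue equation for $v_j$; this gives $(\cE_j+1)(v_j,F)_U=0$ with $\cE_j\ne-1$. Hence $(v_j,F)_{L^2(U_r)}=O(e^{-dr})$. Finally, $\|F\|^2_{L^2(U_r)}\ge c\,r$ for large $r$, coming from the term $a e^{-y}$.

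\textbf{Step 3 (conclusion).} Write $w=v+tF$ with $v=\sum_j s_j v_j$. Steps 1--2 give
\[
(n_r+I)(w,w)\le(\cE_\kappa+1+Ce^{-dr})\|v\|^2+Ce^{-dr}\big(t^2+|t|\,\|v\|\big)\le Ce^{-dr}\big(\|v\|^2+t^2\big),
\]
while almost-orthogonality yields $\|w\|^2_{L^2(U_r)}\ge c(\|v\|^2+t^2 r)\ge c(\|v\|^2+t^2)$. The test space is therefore $(\kappa+1)$-dimensional for large $r$, and on it the Rayleigh quotient of $N_r+I$ is at most $C e^{-dr}$. By the min-max principle, $E_{\kappa+1}(N_r)\le-1+c^{-1}e^{-cr}$, which contradicts \eqref{eknr}.

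The main obstacle is Step 1. One has to make sure the integration by parts is justified near the corners of $U_r$, and that the boundary term on $\{x=r\}$ involving $F$ is genuinely exponentially small. The point is that $\partial_x$ annihilates the non-decaying part $a e^{-y}$, which is exactly why the Neumann truncation at $x=r$ is compatible with $F$. The corner $A_r$ causes no trouble, because everything is explicit and smooth up to the boundary.
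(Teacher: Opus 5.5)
Your proof is correct and uses the same overall strategy as the paper. You enlarge a $\kappa$-dimensional bound-state space by $\RR F$. You note that after integration by parts the only surviving term of $(n_r+I)(F,F)$ is $\int_0^{r\tan\theta}F\,\partial_xF\,\dd y$, which is exponentially small because $\partial_x$ annihilates $e^{-y}$. You then use $\|F\|^2_{L^2(U_r)}\ge a_0r$.

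The genuine difference is the choice of the $\kappa$-dimensional part. The paper never uses the explicit eigenfunctions of $N$. By min-max and density it takes a space $V$ of \emph{boundedly supported} functions with $(n+I)(v,v)\le 0$ and $U\cap\supp v\subset U_R$. For $r>R$ this gives three exact facts:
\begin{itemize}
\item $(n_r+I)(v,v)\le 0$;
\item $(n_r+I)(v,F)=0$, directly from \eqref{FFF-aa2};
\item the cross term $\langle v,F\rangle_{L^2(U_r)}=\langle v,F_V\rangle$ is independent of $r$.
\end{itemize}
So the only explicit input the paper needs is the asymptotics of $F$.

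Your choice of the rotated $\psi_{j,\theta}$ is equally valid but costs extra bookkeeping: the decay of $v_j$ and $\nabla v_j$, the additional boundary terms on $\{x=r\}$, and the tails of the Gram matrix. The decay does hold. On the closed sector each exponential in $\psi_{j,\theta}$ is bounded by $\exp\big(-\cot((2j-1)\theta)\,|(x_1,x_2)|\big)$, and $(2j-1)\theta<\frac{\pi}{2}$ for $j\le\kappa(\theta)$. You should state this rather than infer it from ``sums of exponentials''.

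Your identity $(v_j,F)_{L^2(U)}=0$ is correct but unnecessary. Taking $g=F$ in your Step~1 already gives $|(n_r+I)(v,F)|\le Cre^{-dr}\|v\|$. For the Gram lower bound, the uniform estimate $|\langle v,F\rangle_{L^2(U_r)}|\le\|F\|_\infty\|v\|_{L^1(U)}\le C\|v\|$ together with $\|F\|^2_{L^2(U_r)}\ge a_0r$ suffices. This plays exactly the role of the constant $a_1$ in the paper.
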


\begin{proof}
As discussed in Section~\ref{sec3}, the assumption implies that $\theta$ is a critical half-angle, and
then $F$ is a multiple of the Ursell solution  $\Tilde\Psi_n$ in adapted coordinates as described in \eqref{urssely} and \eqref{urssel-decay}. Hence, by adjusting the constant factor one assumes without loss of generality:
\begin{equation}
	\label{FFF3}
F(x,y)=e^{-y}+ G(x,y), 	\quad \big|G(x,y)\big|+\big|\nabla G(x,y)\big|\le d^{-1} e^{-d x}, \quad (x,y)\in U
\end{equation}
with some $d>0$.

Denote by $N$ the Laplacian $u\mapsto-\Delta u$ in the infinite sector $U$ with the Robin boundary condition $\partial_\nu u=u$ on $\Sigma$ and Neumann boundary condition on the oblique boundary $\{x>0,\ y=x\tan\theta\}$.
Further let $n$ and $n_r$ be the bilinear forms for $N$ and $N_r$, respectively,
\begin{align*}
	n(v,v)&=\iint_U |\nabla v|^2\,\dxy-\int_{\Sigma}v^2\,\dd s, & v&\in H^1(U),\\
	n_r(u,u)&=\iint_{U_r} |\nabla u|^2\,\dxy-\int_{\Sigma_r}u^2\,\dd s, & u&\in H^1(U_r).
\end{align*}

As the operator $N+I$ has $\kappa$ strictly negative eigenvalues $\cE_1(\theta)+1,\dots,\cE_\kappa(\theta)+1$
and the functions with bounded supports are dense in $H^1(U)$, see e.g. \cite[Thm.~1.4.2.1]{grisvard}, due to the
min-max principle we can find
a $\kappa$-dimensional subspace $V\subset H^1(U)$ such that
\[
(n+I)(v,v)\le 0 \text{ for all } v\in V
\]
and for a large $R>0$ one has $U\cap \supp v\subset U_{R}$ for all $v\in V$. For any $r>R$ the subspace $V$ can also be viewed as a subset of $H^1(U_r)=\dom n_r$, with
\begin{equation}
	\label{nri}
(n_r+I)(v,v)\le 0 \text{ for all } v\in V,\ r>R.
\end{equation}

By \eqref{FFF3} we may increase $R$ to have $\big|G(x,y)\big|<\frac{1}{4}$ for all $x>R$.
Then for a sufficiently small $\delta>0$ and all $r>R$ we have
\begin{gather*}
	(R,r)\times(0,\delta)\subset U_r,\quad F(x,y)\ge \frac{1}{2} \text{ for all }(x,y)\in (R,r)\times(0,\delta),\\
\|F\|^2_{L^2(U_r)}\ge\iint_{(R,r)\times(0,\delta)} |F|^2\dxy\ge \iint_{(R,r)\times(0,\delta)} \dfrac{1}{4}\,\dxy
= \dfrac{\delta}{4}(r-R),
\end{gather*}
therefore, for some $a_0>0$ we have
\begin{equation}
	\label{c0f}
\|F\|^2_{L^2(U_r)}\ge a_0 r,\quad r\to \infty.
\end{equation}

As $\supp F$ is unbounded (due to the unique continuation principle), one has $F\notin V$, and the subspace
\[
W:=V+\RR F
\]
is a $(\kappa+1)$-dimensional subspace of $H^1(U_r)$ for all $r>R$.
Let
\[
w=v+\xi F\in W, \quad \xi\in \RR.
\]
For any $v\in V$ we have
\[
\langle v,F\rangle_{L^2(U_r)}=\langle v,F_V\rangle_{L^2(U_r)},
\]
where $F_V\in V$ is the orthogonal projection of $F$ on $V$ in $L^2(U_r)$, which is independent of $r$
in view of the above support properties, i.e. the number
\[
a_1:=\|F_V\|_{L^2(U_r)}
\]
is independent of $r$.
Therefore,
\begin{equation}
	\label{wcf}
\begin{aligned}
\|w\|^2_{L^2(U_r)}&=\|v\|^2_{L^2(U_r)}+2\xi\langle v,F\rangle_{L^2(U_r)}+\xi^2\|F\|^2_{L^2(U_r)}\\
&=\|v\|^2_{L^2(U_r)}+2\xi\langle v,F_V\rangle_{L^2(U_r)}+\xi^2\|F\|^2_{L^2(U_r)}\\
&\ge \|v\|^2_{L^2(U_r)}-2|\xi| a_1\|v\|_{L^2(U_r)}+\xi^2\|F\|^2_{L^2(U_r)}\\
&= \big(\|v\|_{L^2(U_r)}-|\xi| a_1\big)^2+ \xi^2\big(\|F\|^2_{L^2(U_r)}-a_1^2\big)\\
&\stackrel{\eqref{c0f}}{\ge} \xi^2 (a_0 r -a_1^2)\ge \dfrac{a_0 r}{2} \xi^2.
\end{aligned}
\end{equation}

At the same time, we have
\begin{align*}
(n_r+I)(w,w)&=(n_r+I)(v,v)+2\xi(n_r+I)(v,F)+\xi^2(n_r+I)(F,F),
\end{align*}
and the weak formulation \eqref{FFF-aa2} of~\eqref{FFF} with $\varphi:=v$ gives
\[
(n_r+I)(v,F)=0.
\]
Further, taking into account the properties \eqref{FFF} of $F$ we have, using the integration by parts,
\begin{align*}
	(n_r+I)(F,F)&=\iint_{U_r} |\nabla F|^2\,\dxy-\int_{\Sigma_r}F^2\,\dd s +\iint_{U_r}F^2\,\dxy\\
	&=\iint_{U_r} F(-\Delta F)\,\dxy+\int_{\partial U_r} F\partial_\nu F\,\dd s
	-\int_{\Sigma_r}F^2\,\dd s +\iint_{U_r}F^2\,\dxy\\
	&=\int_{\{x=r\}\cap \partial U_r} F\partial_\nu F\,\dd s
	=\int_0^{r\tan\theta} F(r,y)\partial_x F(r,y)\,\dd y,
\end{align*}
due to the representation~\eqref{FFF3} we obtain
\[
\big|	(n_r+I)(F,F)\big|\le \int_0^{r\tan\theta} (e^{-y}+d^{-1}e^{-dr})d^{-1}e^{-dr}\dd y\le a_2 e^{-dr},\quad a_2>0.
\]
It follows that
\begin{align*}
	(n_r+I)(w,w)&=\underbrace{(n_r+I)(v,v)}_{\le 0 \text{ by }\eqref{nri}}+\xi^2(n_r+I)(F,F)\\
	&\le \xi^2 a_2 e^{-dr}\stackrel{\eqref{wcf}}{\le } \dfrac{2 a_2 e^{-dr}}{a_0 r} \|w\|^2_{L^2(U_r)},
\end{align*}	
and the min-max principle yields
\[
E_{\kappa+1}(N_r)+1\le \dfrac{2 a_2 e^{-dr}}{a_0 r}.\qedhere
\]
\end{proof}

The direction $\Rightarrow$ in Lemma~\ref{thm41} is covered in the following assertion.

\begin{lemma}\label{prop41}
Assume that there exist a sequence $(r_j)\subset(0,\infty)$ with $r_j\xrightarrow{j\to\infty}\infty$
and a constant $b\in(0,\frac{\pi^2}{4})$	 such that
\begin{equation}
	\label{lest2}
\lambda_j:=E_{\kappa+1}(N_{r_j})\le -1+\dfrac{b}{r_j^2} \text{ for }j\to\infty.
\end{equation}
Then there exists a bounded weak solution $F\not\equiv 0$ of \eqref{FFF}.
\end{lemma}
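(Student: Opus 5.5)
The plan is a compactness argument. We extract a limit from suitably normalized eigenfunctions of $N_{r_j}$ and show that this limit is a bounded solution of \eqref{FFF}. The hard part is excluding the trivial limit, and for that we use the quantitative gap between $b$ and $\frac{\pi^2}{4}$.

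\emph{Step 1: construction and local compactness.} Let $u_j$ be an eigenfunction of $N_{r_j}$ for $\lambda_j$. By \eqref{eq-low} and \eqref{lest2} we have $\lambda_j\to-1$, since $\lambda_j$ lies above $\cE_\kappa(\theta)<-1$ asymptotically. We normalize so that $\|u_j\|_{L^\infty(U_{r_j})}=1$.
\begin{itemize}
\item $L^\infty$ is finite: Proposition~\ref{prop-nittka} applies to $U_{r_j}$, with $\beta=-1$ on $\Sigma_{r_j}$ and $\beta=0$ elsewhere.
\item Local $H^1$ bounds: test the weak eigenvalue equation with $\chi^2u_j$ for a fixed cutoff $\chi$, exactly as in the derivation of \eqref{uuff}. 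This gives $\|u_j\|_{H^1(U_\rho)}\le C_\rho$ for every fixed $\rho$, using $|\lambda_j|\le 2$ and the trace inequality.
\item Limit: a diagonal subsequence converges weakly in $H^1(U_\rho)$ and strongly in $L^2(U_\rho)$ for each $\rho$, to some $F$ with $\|F\|_\infty\le1$.
\item Equation: for a test function $\varphi\in H^1(U)$ with bounded support, $\varphi\in H^1(U_{r_j})$ once $r_j$ is large. Passing to the limit in the weak form with $\lambda_j\to-1$ gives \eqref{FFF-aa2}, so $F$ is a bounded weak solution of \eqref{FFF}.
\end{itemize}

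\emph{Step 2: where the sup of $u_j$ sits.} To pass the normalization to the limit, we must show that the sup of $|u_j|$ is not attained only at points escaping to infinity. Let $p_j$ be points with $|u_j(p_j)|\ge\frac12$.
\begin{itemize}
\item Proposition~\ref{prop-nittka}, applied on unit-size subdomains around $p_j$, gives a lower bound on $\|u_j\|_{L^2}$ in a unit neighbourhood of $p_j$. These subdomains are translates of finitely many model shapes: pieces of the half-plane with Robin boundary, pieces of the wedge near $O$, and pieces near the right angles at $(r_j,0)$ and $A_{r_j}$. So the constant is uniform in $j$.
\item If $p_j$ stays bounded, the strong local $L^2$ convergence gives $F\not\equiv0$ and we are done.
\end{itemize}

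\emph{Step 3 (main obstacle): excluding escape of mass.} Suppose instead that $u_j\to0$ uniformly on every fixed $U_R$. We want to show that then
\[
\lambda_j\ge -1+\frac{\pi^2/4-\eps}{r_j^2}\quad\text{for large }j,
\]
which contradicts $b<\frac{\pi^2}{4}$.
\begin{itemize}
\item Truncation: multiply $u_j$ by a cutoff vanishing on $U_R$, equal to $1$ for $x>2R$. The energy error is controlled by $\sup_{U_{2R}}|u_j|\to0$. To compare this error with $r_j^{-2}\|u_j\|^2_{L^2}$, we first show that $\|u_j\|^2_{L^2(U_{r_j})}\gtrsim r_j$. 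This should follow from Step 2 together with the eigenvalue equation along $\Sigma$: the profile $e^{-y}$ propagates in $x$ with only slow variation.
\item Transverse analysis on $\{x>R\}$: work on each vertical segment $(0,x\tan\theta)$ with Robin at $y=0$ and Neumann at the top. The lowest eigenvalue is $-1-O(e^{-2x\tan\theta})$ with eigenfunction close to $e^{-y}$, and the second eigenvalue is $\ge0$.
\item Decomposition: write $u=g(x)e^{-y}+u^\perp$. The domain is not a product, so the derivative of the fibre eigenfunction in $x$ produces cross terms; these are exponentially small in $R$. The $u^\perp$ component contributes energy $\ge \|u^\perp\|^2$.
\item Longitudinal problem: $g$ is approximately Dirichlet at $x=R$ and Neumann at $x=r_j$ (coming from the side $\{x=r\}$). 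Its energy is therefore bounded below by the first eigenvalue of the mixed interval problem, $\frac{\pi^2}{4(r_j-R)^2}$.
\end{itemize}
This gives a lower bound of the form $-1+\frac{\pi^2}{4r_j^2}(1-o(1))$, in the same spirit as the proof of \eqref{eq-dr} in \cite[Prop.~3.10]{kobp}, and so a contradiction.

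The delicate points are making the truncation error $o(r_j^{-2})\|u_j\|^2$ and the fibre cross terms uniformly small. If the $L^\infty$ normalization proves awkward there, I would first renormalize by $\|u_j\|^2_{L^2(U_{r_j})}=r_j$ and rerun Steps 1–2 with that normalization.
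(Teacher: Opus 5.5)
Your Step 1 is fine and matches the paper's construction of $F$ (Lemmas~\ref{lem42}--\ref{lem48a}). One correction there: your justification of $\lambda_j\to-1$ does not work. Knowing $\lambda_j\ge E_\kappa(N_{r_j})\to\cE_\kappa(\theta)$ gives no lower bound near $-1$. You need the lower bound \eqref{lest1}, $E_{\kappa+1}(N_r)\ge -1-a/r^2$, from \cite{kobp}. Also, the vertex $A_{r_j}$ has angle $\frac{\pi}{2}-\theta$, not $\frac{\pi}{2}$.

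The genuine gap is Step 3. If $F=0$, compactness only gives $\delta_j(R):=\sup_{U_{2R}}|u_j|\to0$ for each fixed $R$, with no rate in $r_j$. In the exact identity
$(n_{r_j}+1)(\chi u_j,\chi u_j)=(\lambda_j+1)\|\chi u_j\|^2+\iint|\nabla\chi|^2u_j^2$,
the truncation error is of order $\delta_j(R)^2$. The quantity it must be beaten by is $r_j^{-2}\|\chi u_j\|^2$. This is at most of order $r_j^{-1}$ up to a logarithm, because $|u_j|\le1$ and the Agmon estimate in $y$ confines $u_j$ to $y\lesssim\log r_j$. So you would need $\delta_j(R)^2=o(r_j^{-1}\log r_j)$, a quantitative rate that nothing in your argument supplies. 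Lengthening the cutoff to $[R,\eps r_j]$ only moves the requirement to smallness of $u_j$ at distance about $\eps r_j$ from the corner, which is not known either. Renormalizing in $L^2$ changes nothing. The scenario you must exclude is a profile roughly $A\cos\bigl(\sqrt{\lambda_j+1}\,(r_j-x)\bigr)e^{-y}$ with $A\sim1$ that is small only near the corner. Your phrase ``approximately Dirichlet at $x=R$'' is exactly the unproved claim that this cannot happen, and an energy lower bound cannot detect it.

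The missing idea is a resolvent/Green-function $L^\infty$ estimate, which replaces the energy comparison altogether. The paper proceeds as follows.
\begin{itemize}
\item It truncates along the oblique side rather than at $x=R$. There $u_j\to0$ uniformly: by compactness near the two vertices, and by the Agmon estimate in $y$ in between.
\item It extends by zero to the half-strip $\Pi_j$, with a Dirichlet condition at $x=0$, and solves $(H_j-\lambda_j)v_j=f_j$ by separation of variables.
\item The source $f_j$ is exponentially localized in $x$ and satisfies $\|f_j\|_{L^2}\to0$.
\item The ground-state component solves $(D_j-\mu_j)g_j=h_j$. Since $-a\le r_j^2\mu_j\le b<\frac{\pi^2}{4}$, the Green function obeys the uniform bound $|G_{j,\mu_j}(x,t)|\le C\min\{x,t\}$.
\item This yields $\|g_j\|_\infty\le C\int_0^{r_j}t|h_j(t)|\,\dd t\to0$, with no rate needed.
\item The transverse remainder is handled by uniform $H^2$ bounds.
\end{itemize}
This contradicts $\|u_j\|_\infty=1$.
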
	

Note that this actually gives a stronger implication in Lemma~\ref{thm41}: in the absence of threshold resonance, i.e. if the problem \eqref{FFF} has no non-trivial bounded solutions, the non-resonance condition \eqref{eq-nonres3}, or \eqref{eq-nonres2}, holds with any $c\in(0,\frac{\pi^2}{4})$.

The quite involved proof of Lemma~\ref{prop41} occupies the rest of the present Section~\ref{sec4}. We note that the proof does \emph{not} use the explicit form of the bounded solutions and remains independent of the results obtained in~Section~\ref{sec3}.

Recall first that the analysis of \cite[Lem.~3.5--3.6]{kobp} gives the lower bound
\begin{equation}
	E_{\kappa+1}(N_r)\ge -1-\dfrac{a}{r^2}
	\label{lest1}
\end{equation}
for a suitable $a>0$ and all sufficiently large $r$.
Let $u_j$ be eigenfunctions of $N_{r_j}$ for the eigenvalues $\lambda_j$. Due to $L^2$--$L^\infty$-estimates (Proposition \ref{prop-nittka})
one has $u_j\in L^\infty(U_{r_j})$, and without loss of generality we assume the normalization
\begin{equation}
	\label{unorm}
\|u_j\|_{L^\infty(U_{r_j})}= 1 \text{ for all }j.
\end{equation}

Our proof will consist of two main steps. First, we show that a subsequence of $(u_j)$	converges locally uniformly 
to a bounded weak solution $F$ of \eqref{FFF}: this is done in Subsection~\ref{sec-FF} by using compactness arguments
and weak convergence. In the second part of the proof, which is significantly more demanding, we show that $F$ is not identically zero. In fact we show that $F=0$ implies $\|u_j\|_\infty\to0$, which contradicts the normalization \eqref{unorm}.
Namely, by assuming $F=0$ we first show in Subsection~\ref{ssec-oblique} that $u_j$ converge uniformly to zero in any strip of fixed width along the oblique boundary $y=x\tan\theta$. To analyze the behavior of $u_j$ in the rest of $U_{r_j}$ we apply
a suitable truncation (with a transition region contained in a strip of the above type) and extend all functions
by $0$ to the infinite half-strip $\Pi_j:=(0,r_j)\times(0,\infty)$: the truncated functions $v_j$ appear to be solutions
of the problem \eqref{eq-vj1}--\eqref{eq-vj2} involving the Laplacian $H_j$ on $\Pi_j$ with Robin boundary condition $\partial_\nu v=v$ on $\{y=0\}$, the Dirichlet boundary condition on $\{x=0\}$ and Neumann boundary condition on $\{x=r_j\}$.
Note that 
\[
\inf  \spec H_j=-1+\dfrac{\pi^2}{4r_j^2},
\]
so the assumption \eqref{lest2} means that $E_{\kappa+1}(N_{r_j})$ keeps a controllable distance of order $r_j^{-2}$ to $\spec H_j$.
The operator $H_j$ admits a separation of variables and can be written using tensor products
of the one-dimensional operators introduced in \S\S~\ref{ssec-dn}--\ref{ssec-robin}. One then decomposes $v_j$
with respect to the spectral projector on the lowest eigenvalue in the $y$-direction (see \S~\ref{item443})
and shows that each of the components converges uniformly to zero (Subsections \ref{item444} and~\ref{item445}).
The eigenvalue bound~\eqref{lest2} is used in Lemma~\ref{lem-gj} to guarantee a suitable upper bound for the Green function in $x$-direction. All these observations are put together in Subsection~\ref{ssec-prop41}, which completes the proof of Lemma~\ref{prop41}.

\subsection{Constructing a candidate for a threshold resonance}\label{sec-FF}

Note that by \eqref{lest1} and \eqref{lest2} for large $j$ we have
\begin{equation}
	\label{eq-lambdaj}
-1-\dfrac{a}{r_j^2}\le \lambda_j\le -1+\dfrac{b}{r_j^2},\quad -a\le r^2_j(\lambda_j+1)\le b,\qquad
\lambda_j\xrightarrow{j\to\infty}-1,
\end{equation}
so we assume without loss of generality from now on that
\[
-\dfrac{3}{2}<\lambda_j<-\dfrac{1}{2} \text{ for all }j.
\]

\begin{lemma}\label{lem42}
	There exists an infinite index set $J\subset\NN$ such that the subsequence $(u_j)_{j\in J}$
	converges locally uniformly in $U$ to a function $F\in C^\infty(U)$ that satisfies $\|F\|_{L^\infty(U)}\le 1$ and
	$-\Delta F=-F$ in $U$.	
\end{lemma}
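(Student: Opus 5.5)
The plan is to use interior elliptic regularity together with a diagonal Arzelà--Ascoli argument. Fix an exhaustion of $U$ by bounded open sets $K_1\Subset K_2\Subset\dots$ with $\overline{K_\ell}\subset U$; for instance $K_\ell:=\{(x,y)\in U:\ |(x,y)|<\ell,\ \dist((x,y),\partial U)>1/\ell\}$. Since $r_j\to\infty$, each $K_{\ell+1}$ lies in $U_{r_j}$ for all large $j$, and there $u_j$ solves $-\Delta u_j=\lambda_j u_j$ in the interior. By the normalization \eqref{unorm} and the bound $-\frac32<\lambda_j<-\frac12$ we get, uniformly in $j$,
\[
\|u_j\|_{L^2(K_{\ell+1})}+\|\Delta u_j\|_{L^2(K_{\ell+1})}\le \big(1+\tfrac32\big)\,|K_{\ell+1}|^{1/2}.
\]
Interior regularity gives $u_j\in H^2_\loc$. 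Proposition~\ref{calzyg}, applied with $\Omega:=K_{\ell+1}$ and $\Omega_0:=K_\ell$, then shows that $(u_j)$ is bounded in $H^2(K_\ell)$.

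In two dimensions the embedding $H^2(K_\ell)\hookrightarrow C^{0,\gamma}(\overline{K_\ell'})$ is compact on a slightly smaller smooth subdomain. This follows from Sobolev embedding combined with Rellich, or more simply from bootstrapping: apply Proposition~\ref{calzyg} to the derivatives, using $\Delta(\partial^\beta u_j)=\lambda_j\partial^\beta u_j$. That yields uniform $H^k$ bounds on every compact subset for every $k$, and hence uniform $C^m$ bounds. Arzelà--Ascoli then gives a subsequence converging in $C^m(\overline{K_\ell})$. A diagonal extraction over $\ell$ (and $m$) produces the infinite index set $J$ and a limit $F$ with $u_j\to F$ in $C^m_\loc(U)$ for all $m$. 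In particular the convergence is locally uniform and $F\in C^\infty(U)$.

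It remains to pass to the limit. Since $\lambda_j\to-1$ by \eqref{eq-lambdaj} and second derivatives converge locally uniformly, the equation $-\Delta u_j=\lambda_j u_j$ becomes $-\Delta F=-F$ pointwise in $U$. Also $|u_j|\le1$ pointwise, so $|F|\le 1$ and $\|F\|_{L^\infty(U)}\le1$.

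No step is a serious obstacle. The only point needing care is the domain issue: $u_j$ is defined only on $U_{r_j}$, and the compact sets must stay away from $\partial U$, where the boundary conditions would require boundary estimates. Restricting to $K_\ell\Subset U$ and discarding finitely many $j$ for each $\ell$ handles this. The boundary conditions of \eqref{FFF} are not claimed here; they are presumably treated afterwards via weak convergence in $H^1$ near the boundary.
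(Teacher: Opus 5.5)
Your proof is correct and follows essentially the paper's route: interior Calder\'on--Zygmund $H^2$ bounds on an exhaustion of $U$ by sets compactly contained in $U$ (and in $U_{r_j}$ for large $j$), then compactness and a diagonal extraction. The differences are minor. You bootstrap to $C^m_{\mathrm{loc}}$ convergence and pass to the limit in the equation pointwise, whereas the paper stops at the compact embedding $H^2(V_n)\hookrightarrow C^0(\overline{V_n})$, passes to the limit in $\mathcal{D}'(U)$, and then invokes elliptic regularity to get $F\in C^\infty(U)$. Also, the identity should read $\Delta(\partial^\beta u_j)=-\lambda_j\partial^\beta u_j$, a sign slip that does not affect any of your norm bounds.
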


\begin{proof}
	For a large $m\in\NN$ and $n\in\NN_{\ge m}$ consider the bounded domains
	\[
	V_n:=\big\{ (x,y)\in U:\  x<n,\ \dist\big( (x,y), \partial U\big)>\frac{1}{n}  \big\}\subset U.
	\]
	Obviously,
	\[
	\overline{V_{n}}\subset V_{n+1},\quad \bigcup_{n\ge m}V_n=U,\quad V_n\subset U_r \text{ for }r\ge n.
	\]
	According to local Calder\'on-Zygmund estimates (Proposition~\ref{calzyg}) there are constants $C_n>0$ with
\begin{multline}
\|u\|_{H^2(V_n)}\le C_n\big(\|u\|_{L^2(V_{n+1})}+\|\Delta u\|_{L^2(V_{n+1})}\big)\\
\text{ for all } u\in H^2_\loc(V_{n+1})\cap L^2(V_{n+1}) \text{ with } \Delta u\in L^2(V_{n+1}).
 \label{locest1}
\end{multline}

Due to elliptic regularity one has $u_j\in H^2_\loc(U_{r_j})\cap L^2(U_{r_j})$ and $\Delta u_j=-\lambda_j u_j\in L^2(U_{r_j})$.
Hence, the preceding estimate \eqref{locest1} is applicable to $u_j$ for $r_j\ge n+1$: using $|\lambda_j|\le 2$ and the normalization \eqref{unorm} we obtain
\begin{equation}
	\label{uj-h2}
\|u_j\|_{H^2(V_n)}\le 3 C_n\|u_j\|_{L^2(V_{n+1})}\le 3C_n\sqrt{|V_{n+1}|} \text{ for }r_j\ge n+1.
\end{equation}

By \eqref{uj-h2}, for each $n$ the sequence $(u_j)$ (with sufficiently large $j$) is bounded in $H^2(V_n)$,
hence, it contains a weakly convergent subsequence, which is then uniformly convergent on $V_n$ due to the compactness of the embedding $H^2(V_n)\hookrightarrow C^0(\overline{V_n})$, while the latter space is endowed with the $L^\infty$-norm.
In order to exploit these observations properly, we first choose an infinite index set $J_m\subset \NN_{\ge m}$ such that 
$r_j\ge m$ for all $j\in J_m$ and $(u_j)_{j\in J_m}$ uniformly converges on $V_m$ to some function
$F_m\in C^0(\overline{V_m})$. Then for each $n\in\NN_{\ge m}$ we choose iteratively an infinite index set $J_{n+1}\subset J_n$ such that $r_j\ge n+1$ for all $j\in J_{n+1}$ and  $(u_j)_{j\in J_{n+1}}$
uniformly converges on $V_{n+1}$ to some function $F_{n+1}\in C^0(\overline{V_{n+1}})$. By construction we have $F_k=F_n$ on $V_n$ for all $k\ge n$, which allows one to define a function $F\in C^0(U)$ by
\[
F(x):=F_n(x) \text{ for }x\in V_n.
\]
In view of the normalization \eqref{unorm} of $u_j$ we have $\|F\|_{L^\infty(V_n)}\le 1$ for each $n$, hence, 
$\|F\|_{L^\infty(U)}\le 1$.

Now for each $n\in\NN_{\ge m}$ let us choose iteratively $j_n\in J_n$ with $j_{n+1}>j_n$, and then we denote $J:=(j_n)_{n\ge m}$. For each $n\in\NN_{\ge m}$ one has $(j_k)_{k\ge n}\subset J_n$, which shows that $(u_j)_{j\in J}$ is (up to finitely many terms) a subsequence of $(u_j)_{j\in J_n}$
and converges to $F$ uniformly on $V_n$. As each compact subset of $U$ is contained in one of $V_n$, the subsequence
$(u_j)_{j\in J}$ converges to $F$ locally uniformly in $U$.

Let $\varphi \in C^\infty_c(U)$. For each $j\in J$ one has $-\Delta u_j =\lambda_j u_j$ in $U_{r_j}$, hence,
by assuming that $j$ is sufficiently large to have $\supp\varphi\subset U_{r_j}$ we obtain
\[
-\iint_U u_j\Delta\varphi\,\dxy= \lambda_j\iint_U u_j\varphi\,\dxy.
\]
As $u_j$ converges to $F$ uniformly on the compact subset $\supp\varphi\subset U$ and $\lambda_j$ converges to $-1$,
one can pass to the limit $j\to \infty$ (with $j\in J$) to obtain
\[
-\iint_U F\Delta\varphi\,\dxy= -\iint_U F\varphi\,\dxy.
\]
As $\varphi \in C^\infty_c(U)$ was arbitrary, one concludes that $-\Delta F=-F$ in $\cD'(U)$, and the elliptic regularity theorem implies $F\in C^\infty(U)$.	
\end{proof}

\emph{We denote the subsequence $(u_j)_{j\in J}$ constructed in Lemma~\ref{lem42} simply by $(u_j)_{j\in\NN}$ to keep simple notation.}

Note that Lemma~\ref{lem42} does not guarantee that the function $F$ is well-behaved near $\partial U$, in particular, the necessary boundary
conditions in \eqref{FFF} are not shown yet: this will be the subject of the next steps.

We will start with a simple estimate:
\begin{lemma}\label{lem-bd0}
For any fixed bounded measurable $V\subset U$ one has
\[
\|u_j-F\|_{L^2(V)}\xrightarrow{j\to\infty}0.
\]
\end{lemma}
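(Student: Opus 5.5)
The plan is to combine the local uniform convergence from Lemma~\ref{lem42} with the uniform bound $\|u_j\|_{L^\infty(U_{r_j})}=1$ and $\|F\|_{L^\infty(U)}\le 1$, and then apply dominated convergence on a slightly shrunken set.

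Fix a bounded measurable $V\subset U$ and $\eps>0$. Since $V$ is bounded, we have $V\subset U_{r_j}$ for all large $j$, so $u_j$ is defined on $V$ for such $j$; extend $u_j$ by zero outside $U_{r_j}$ for the remaining finitely many indices. Because $V$ has finite measure, I would choose $n$ (using the exhausting sets $V_n$ from the proof of Lemma~\ref{lem42}) so large that $|V\setminus V_n|<\eps$. This is possible because $V\setminus V_n$ decreases to the empty set as $n\to\infty$, since $\bigcup_n V_n=U$, and continuity of measure then gives $|V\setminus V_n|\to 0$.

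Next I split
\[
\|u_j-F\|^2_{L^2(V)}\le |V|\,\sup_{V_n\cap V}|u_j-F|^2+\iint_{V\setminus V_n}|u_j-F|^2\,\dxy .
\]
The first term tends to zero, because $\overline{V_n}$ is a compact subset of $U$, so $(u_j)$ converges to $F$ uniformly there by Lemma~\ref{lem42}. For the second term, the normalization \eqref{unorm} together with $\|F\|_{L^\infty(U)}\le1$ gives $|u_j-F|\le 2$ pointwise, so the term is at most $4\eps$. Hence $\limsup_{j}\|u_j-F\|^2_{L^2(V)}\le 4\eps$, and since $\eps$ is arbitrary the claim follows.

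There is no real obstacle here. The only point needing care is that the uniform convergence holds only on compacts away from $\partial U$; this is handled by the smallness of $|V\setminus V_n|$ combined with the uniform $L^\infty$ bound.
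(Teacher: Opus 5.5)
Your proof is correct and follows essentially the same route as the paper. You split $V$ into a part where $u_j\to F$ uniformly and a remainder of measure less than $\eps$, and on the remainder you use the crude bound $|u_j-F|\le 2$ coming from \eqref{unorm} and $\|F\|_{L^\infty(U)}\le 1$. The only cosmetic difference is that the paper obtains a compact $K\subset V$ with $|V\setminus K|<\eps$ by inner regularity, whereas you use $V\cap V_n$ with the exhausting sets $V_n$ and continuity of measure; this works equally well because $\overline{V_n}$ is a compact subset of $U$.
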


\begin{proof}
	Due to the boundedness of $V$ there is $j_0\in\NN$ with $V\subset U_{r_j}$ for all $j\ge j_0$, therefore, the respective $u_j$ are defined on $V$.	Let $\eps>0$ and $K\subset V$ be a compact subset with $|V\setminus K|<\eps$. By construction $u_j$ converge to $F$ uniformly in $K$, and the normalization \eqref{unorm} of $u_j$ together with $|F|\le 1$ gives the integrable upper bound $|u_j-F|^2\le 4$ on $V$. The dominated convergence theorem implies
	\begin{align*}
		\limsup_{j\to\infty} \iint_{V} |u_j-F|^2\,\dxy&=
		\limsup_{j\to\infty} \Big(\iint_{V\setminus K} |u_j-F|^2\,\dxy\\
		&\qquad\qquad\qquad\qquad+\underbrace{\iint_{K} |u_j-F|^2\,\dxy}_{\le \|u_j-F\|^2_{L^\infty(K)}|K|\to 0}\Big)\\
		&\le \limsup_{j\to\infty} \iint_{V\setminus K} \underbrace{|u_j-F|^2}_{\le 4}\,\dxy\le 4\big|V\setminus K\big|<4\eps.
	\end{align*}
	As $\eps>0$ was arbitrary, one obtains $\|u_j\|^2_{L^2(V)}\to 0$.
\end{proof}

\begin{lemma}\label{corol-grad}
	For any $0<r<R$ one can find $j_0\in \NN$ and $c>0$ such that
	\[
	\int_{U\cap B_r(0)} |\nabla u_j|^2\,\dd x\le c\int_{U\cap B_R(0)} u_j^2\,\dd x \text{ for all }j\ge j_0.
	\]
\end{lemma}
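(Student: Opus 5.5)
This is a Caccioppoli-type inequality up to the boundary, adapted to the mixed Robin--Neumann conditions. I would prove it by testing the weak eigenvalue equation for $u_j$ with $\eta^2 u_j$.

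Fix a cutoff $\eta\in C^\infty_c(\RR^2)$ with $0\le\eta\le1$, $\eta=1$ on $B_r(0)$, $\supp\eta\subset B_R(0)$ and $|\nabla\eta|\le 2/(R-r)$. Choose $j_0$ such that $r_j>R$ for $j\ge j_0$. Then $U\cap B_R(0)\subset U_{r_j}$ (since $x\le|(x,y)|<R<r_j$), so $\eta$ vanishes near the artificial side $\{x=r_j\}$. Hence $\eta^2u_j\in H^1(U_{r_j})$ is an admissible test function in the form $n_{r_j}$. The weak formulation $n_{r_j}(u_j,\varphi)=\lambda_j\langle u_j,\varphi\rangle$ with $\varphi=\eta^2u_j$, combined with the identity $\nabla u\cdot\nabla(\eta^2u)=|\nabla(\eta u)|^2-|\nabla\eta|^2u^2$ used for \eqref{uuff}, gives
\[
\iint_{U}|\nabla(\eta u_j)|^2\,\dxy-\int_\Sigma(\eta u_j)^2\,\dd s=\iint_U|\nabla\eta|^2u_j^2\,\dxy+\lambda_j\iint_U\eta^2u_j^2\,\dxy .
\]

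The only sign issue is the Robin term, which has the wrong sign. I would control it by a trace inequality on the half-line, exactly as in Lemma~\ref{lemdd}. Writing $w=\eta u_j$, which vanishes for large $y$ within each vertical line inside $U$, one has
\[
w(x,0)^2=-\int_0^{x\tan\theta}\partial_y(w^2)\,\dd y+w(x,x\tan\theta)^2 .
\]
The upper endpoint term is annoying, so I would instead use the simpler $\eps$-trace inequality $\int_\Sigma w^2\,\dd s\le \eps\|\nabla w\|^2_{L^2(U)}+C_\eps\|w\|^2_{L^2(U)}$. This inequality holds uniformly for functions supported in $U\cap B_R(0)$, since the sector $U$ is a fixed Lipschitz domain. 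One way to see it is to use a further cutoff in $y$, which is harmless because $\Sigma$ lies far from the oblique side except near the origin, where the standard Lipschitz trace inequality applies. Taking $\eps=\frac12$ yields
\[
\tfrac12\|\nabla(\eta u_j)\|^2\le \big(C+\|\nabla\eta\|_\infty^2+|\lambda_j|\big)\|u_j\|^2_{L^2(U\cap B_R)} .
\]

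Finally, $|\lambda_j|\le 2$ uniformly, and $\nabla u_j=\nabla(\eta u_j)$ on $U\cap B_r(0)$, so the claim follows with $c$ depending only on $r,R,\theta$. The main point to check carefully is the uniform $\eps$-trace inequality on the corner domain $U\cap B_R(0)$ near the vertex. It follows from the standard bound $\|w\|^2_{L^2(\partial D)}\le C\|w\|_{L^2(D)}\|w\|_{H^1(D)}$ on the bounded Lipschitz domain $D=U\cap B_R(0)$, combined with Young's inequality.
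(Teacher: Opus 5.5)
Your proof is correct and follows essentially the same route as the paper. You test the eigenvalue equation with $\eta^2u_j$, rewrite it via $\nabla u_j\cdot\nabla(\eta^2u_j)=|\nabla(\eta u_j)|^2-|\nabla\eta|^2u_j^2$, and absorb the Robin term with an $\eps$-trace inequality with $\eps=\frac12$, then use $\eta=1$ on $B_r(0)$. The differences are cosmetic: the paper applies the trace inequality on the whole sector $U$ and drops $\lambda_j\iint\eta^2u_j^2$ using $\lambda_j<0$, whereas you work on the bounded Lipschitz domain $U\cap B_R(0)$ and bound that term by $|\lambda_j|\le 2$.
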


\begin{proof}
(a) Let $\eta\in C^\infty_c(\RR^2)$. One picks a large $j_0\in\NN$ with $\supp\eta\cap U\subset U_{r_j}$ for all $j\ge j_0$, which we assume from now on.
	
In view of the definition of $u_j$, for any $\varphi\in H^1(U_{r_j})$ one has
\begin{equation}
	\label{test1}
\iint_{U_{r_j}}\nabla u_j\cdot\nabla\varphi\,\dxy
-\int_{\Sigma_{r_j}} u_j\varphi\,\dd s=
\lambda_j\iint_{U_{r_j}}u_j\varphi\,\dxy.
\end{equation}
In particular, one can set $\varphi:=\eta^2 u_j$: we have
\begin{equation}
	\label{etau}
\begin{aligned}
	\nabla u_j\cdot \nabla(\eta^2 u_j)&=\nabla u_j\cdot(2\eta u_j\nabla\eta+\eta^2\nabla u_j)
	=2\eta u_j \nabla u_j\cdot\nabla\eta+\eta^2|\nabla u_j|^2\\
	&=|\eta \nabla u_j+ u_j\nabla\eta|^2-|\nabla \eta|^2 u_j^2
	=\big|\nabla(\eta u_j)\big|^2-|\nabla \eta|^2 u_j^2.
\end{aligned}
\end{equation}
The substitution into \eqref{test1} gives
\[
	\iint_{U_{r_j}}\big|\nabla(\eta u_j)\big|^2\dxy-\iint_{U_{r_j}}|\nabla \eta|^2 u_j^2\,\dxy
	-\int_{\Sigma_{r_j}} (\eta u_j)^2\,\dd s=
	\lambda_j\iint_{U_{r_j}}(\eta u_j)^2\,\dxy,
\]
and because of $\lambda_j<0$ we obtain
\begin{equation}
	\label{test2}
	\iint_{U}\big|\nabla(\eta u_j)\big|^2\dxy\le
	\iint_{U_{r_j}}|\nabla \eta|^2 u_j^2\,\dxy+
	\int_{\Sigma} (\eta u_j)^2\,\dd s.
\end{equation}

The trace theorem on $U$ shows that one can find $c\ge 1$ such that
\[
\int_{\Sigma} v^2\,\dd s\le
\int_{\partial U} v^2\,\dd s\le \dfrac{1}{2}\iint_U |\nabla v|^2\,\dxy+ c\iint_U v^2\,\dxy \text{ for all }v\in H^1(U).
\]
The substitution into \eqref{test2} yields
\[
	\iint_{U}\big|\nabla(\eta u_j)\big|^2\dxy\le \iint_{U}|\nabla\eta|^2 u_j^2\dxy+
	\dfrac{1}{2}\iint_U \big|\nabla(\eta u_j)\big|^2\,\dxy+ c\iint_U (\eta u_j)^2\,\dxy,
\]
which results in
\begin{equation}
	\label{cuj}
\iint_{U}\big|\nabla(\eta u_j)\big|^2\dxy\le 2c\iint_U \big(|\nabla\eta|^2+\eta^2\big) u_j^2\,\dxy. 
\end{equation}

(b) Now let $0<r<R$. Pick $\eta\in C^\infty_c(\RR^2)$ with $\eta=1$ in $B_r(0)$
	and $\supp\eta\subset B_R(0)$. Choose $j_0$ and $c>0$ as in the part (a), then \eqref{cuj}
	holds for all $j\ge j_0$, and
	\begin{align*}
		\iint_{U\cap B_r(0)} &|\nabla u_j|^2\,\dxy\le \iint_{U} \big|\nabla (\eta u_j)\big|^2\,\dxy\\
		&\le 2c\iint_U \big(|\nabla\eta|^2+\eta^2\big) u_j^2\,\dxy
		=2c\iint_{U\cap B_R(0)} \big(|\nabla\eta|^2+\eta^2\big) u_j^2\,\dxy\\
		&\le 2c\big(\|\nabla\eta\|^2_\infty+\|\eta\|^2_\infty\big)\iint_{U\cap B_R(0)} u_j^2\,\dxy.\qedhere
	\end{align*}
\end{proof}

\begin{lemma}\label{lem48a}
	The function $F$ constructed in Lemma~\ref{lem42} is $H^1$-regular on each bounded subdomain of $U$ and satisfies the boundary value problem \eqref{FFF} in the weak sense.
\end{lemma}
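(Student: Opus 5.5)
The plan is to pass to the limit in the weak formulation \eqref{test1}, using the local $H^1$ bounds from Lemma~\ref{corol-grad} and the $L^2$ convergence from Lemma~\ref{lem-bd0}. We never need to touch the boundary directly for the limit function; instead we extract weak limits of $u_j$ in $H^1$ of bounded pieces of $U$ up to the boundary.

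First, for fixed $\rho>0$ we apply Lemma~\ref{corol-grad} with $r:=\rho$ and $R:=\rho+1$. Together with the normalization \eqref{unorm}, this gives
\[
\|u_j\|_{H^1(U\cap B_\rho(0))}^2\le (c+1)\,|U\cap B_{\rho+1}(0)|
\quad\text{for } j\ge j_0 .
\]
Hence $(u_j)$ is bounded in $H^1(U\cap B_\rho(0))$, and a subsequence converges weakly in $H^1(U\cap B_\rho(0))$. By Lemma~\ref{lem-bd0}, $u_j\to F$ in $L^2(U\cap B_\rho(0))$ along the full sequence, so every weak limit equals $F$. Therefore $F\in H^1(U\cap B_\rho(0))$, and the whole sequence converges to $F$ weakly in $H^1(U\cap B_\rho(0))$, since every subsequence has a further subsequence with the same limit. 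As every bounded subdomain of $U$ lies in some $U\cap B_\rho(0)$, this gives the claimed $H^1$-regularity.

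Second, we verify \eqref{FFF-aa2} on $U$. Let $\varphi\in H^1(U)$ have bounded support, say $\supp\varphi\subset B_\rho(0)$. For large $j$ we have $U\cap B_\rho(0)\subset U_{r_j}$, so the restriction of $\varphi$ is an admissible test function in \eqref{test1}. Here we need the boundary pieces to match. Since $U_{r_j}\cap B_\rho(0)=U\cap B_\rho(0)$ for $r_j>\rho$, the only boundary touched is $\Sigma$ together with the oblique side. The artificial side $\{x=r_j\}$ is not met, so the Neumann condition there plays no role. We then pass to the limit term by term.
\begin{itemize}
\item $\iint\nabla u_j\cdot\nabla\varphi\to\iint\nabla F\cdot\nabla\varphi$, by weak $H^1$ convergence on $U\cap B_\rho(0)$.
\item $\lambda_j\iint u_j\varphi\to-\iint F\varphi$, by \eqref{eq-lambdaj} and Lemma~\ref{lem-bd0}.
\item $\int_{\Sigma}u_j\varphi\,\dd s\to\int_\Sigma F\varphi\,\dd s$, because the trace map $H^1(U\cap B_\rho(0))\to L^2(\partial(U\cap B_\rho(0)))$ is bounded linear and hence weakly continuous. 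Alternatively, by compactness of the trace on bounded Lipschitz domains the convergence is even strong.
\end{itemize}
This yields \eqref{FFF-aa2}, i.e. $F$ is a weak solution of \eqref{FFF}, with Robin condition on $\Sigma$ and Neumann condition on the oblique side. Boundedness, $\|F\|_\infty\le1$, is already known from Lemma~\ref{lem42}.

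The only delicate point is the $H^1$ bound up to the boundary: Lemma~\ref{lem42} gives only interior convergence, so the boundary conditions depend on Lemma~\ref{corol-grad}. That lemma is uniform up to $\partial U$ because it relies on the trace inequality on $U$ and uses cutoffs $\eta$ that need not vanish on $\partial U$. The identification of the weak limit with $F$ near the boundary is handled by Lemma~\ref{lem-bd0}, which deals with the non-uniform convergence near the boundary by dominated convergence. Nothing else seems to require new estimates.
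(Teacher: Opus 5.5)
Your proof is correct and follows essentially the same route as the paper. Both arguments take local $H^1$ bounds on $U\cap B_\rho(0)$ from Lemma~\ref{corol-grad} together with the normalization, identify weak $H^1$ limits with $F$ via Lemma~\ref{lem-bd0}, and pass to the limit in the weak formulation using continuity of the trace. The only difference is cosmetic: you identify the weak limit directly from the strong $L^2$ convergence, so no compact embedding is needed, and you upgrade to weak convergence of the whole sequence, whereas the paper works with a subsequence for each ball.
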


\begin{proof}
	(a) For each $n\in\NN$ denote
\[
V_n:=U\cap B_n(0)
\]	
and choose $j_n\in\NN$ and $C_n>0$ with
\[
	\iint_{V_n} |\nabla u_j|^2\,\dxy\le C_n\iint_{V_{n+1}} u_j^2\,\dxy \text{ for all }j\ge j_n,
\]
which is possible by virtue of Lemma~\ref{corol-grad}.

(b) Let $n$ be fixed. For $j\ge j_n$ the normalization \eqref{unorm} of $u_j$ implies
\[
\iint_{V_n} u_j^2\,\dxy\le |V_n|,\quad
\iint_{V_n} |\nabla u_j|^2\,\dxy\le C_n |V_{n+1}|.
\]	
Hence, the sequence $(u_j)_{j\ge j_n}$ is bounded in $H^1(V_n)$
and contains a subsequence $(u_j)_{j\in J_n}$ which converges weakly to some $G_n\in H^1(V_n)$.
As the embedding $H^1(V_n)\hookrightarrow L^2(V_n)$ is compact, the same subsequence converges to $G_n$
strongly in $L^2(V_n)$. On the other hand, by Lemma~\ref{lem-bd0} the functions $u_j$ converge to $F$ strongly
in $L^2(V_n)$, which yields $F=G_n\in H^1(V_n)$, and $(u_j)_{j\in J_n}$ converges to $F$ weakly in $H^1(V_n)$.

(c) As any bounded subdomain of $U$ is contained in one of $V_n$, by (b) one obtains $F\in H^1(U')$ for any bounded open subdomain $U'\subset U$.

(d) Let $\varphi\in H^1(U)$ have bounded support. One can choose $n\in\NN$ with $U\cap\supp\varphi\subset V_n$. For the sequence
$(u_j)_{j\in J_n}$ in (b) the weak convergence to $F$ in $H^1(V_n)$ implies
\begin{align*}
\iint_{V_n} u_j \varphi\,\dxy&\xrightarrow{j\in J_n,\, j\to\infty}\iint_{V_n} F \varphi\,\dxy\equiv \iint_{U} F \varphi\,\dxy,\\
\iint_{V_n} \nabla u_j\cdot\nabla \varphi\,\dxy&\xrightarrow{j\in J_n,\, j\to\infty}\iint_{V_n} \nabla F\cdot\nabla \varphi\,\dxy\equiv \iint_{U} \nabla F\cdot\nabla \varphi\,\dxy,
\end{align*}
and the continuity of the Sobolev trace on $V_n$ gives, in view of $\Sigma_n\subset\partial V_n$,
\[
\int_{\Sigma_n} u_j \varphi\,\dd s\xrightarrow{j\in J_n,\, j\to\infty}\int_{\Sigma_n} F \varphi\,\dd s\equiv \int_{\Sigma} F \varphi\,\dd s.
\]

Recall that for each $j$ one has
\[
\iint_{V_n}\nabla u_j\cdot\nabla\varphi\,\dxy-\int_{\Sigma_n} u_j\varphi\,\dd s=\lambda_j\iint_{V_n} u_j\varphi\,\dxy,
\]
and for $j\to\infty$ with $j\in J_n$ one obtains
\[
\iint_{U}\nabla F\cdot\nabla\varphi\,\dxy-\int_{\Sigma} F\varphi\,\dd s=-\iint_{U} F\varphi\,\dxy,
\]
which is exactly the weak formulation of the boundary value problem \eqref{FFF} due to the arbitrariness of $\varphi$.
\end{proof}

The rest of this section is to show that the function $F$ is not identically zero. We are going to show that the assumption 
$F=0$ leads to $\|u_j\|_{L^\infty(U_{r_j})}\xrightarrow{j\to\infty}0$, which contradicts the normalization \eqref{unorm}.

\subsection{Analysis near the oblique boundary}\label{ssec-oblique}

We will start with a simple estimate:
\begin{lemma}\label{lem-bd}
	Assume that $F=0$, then for any fixed bounded measurable $V\subset U$ one has $\|u_j\|_{L^2(V)}+\|\nabla u_j\|_{L^2(V)}\xrightarrow{j\to\infty}0$.
\end{lemma}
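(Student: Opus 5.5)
The claim follows by combining Lemma~\ref{lem-bd0} with Lemma~\ref{corol-grad}. Under the assumption $F=0$, Lemma~\ref{lem-bd0} gives $\|u_j\|_{L^2(V)}\to 0$ for every fixed bounded measurable $V\subset U$. This already handles the $L^2$ part of the statement, so only the gradient part needs work.

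For the gradient term, fix a bounded $V\subset U$ and choose $0<r<R$ with $V\subset U\cap B_r(0)$. Lemma~\ref{corol-grad} provides $j_0\in\NN$ and $c>0$ such that
\[
\|\nabla u_j\|^2_{L^2(V)}\le \iint_{U\cap B_r(0)}|\nabla u_j|^2\,\dxy\le c\iint_{U\cap B_R(0)}u_j^2\,\dxy,\qquad j\ge j_0 .
\]
The set $U\cap B_R(0)$ is itself bounded and measurable. Applying Lemma~\ref{lem-bd0} to it, with $F=0$, shows that the right-hand side tends to zero. Adding the two parts proves $\|u_j\|_{L^2(V)}+\|\nabla u_j\|_{L^2(V)}\to0$.

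There is no real obstacle here. The only point to check is that Lemma~\ref{corol-grad} controls the gradient up to the boundary of $U$, and not merely on compact subsets. This holds because its proof uses cutoffs $\eta\in C^\infty_c(\RR^2)$ rather than $C^\infty_c(U)$, and it absorbs the Robin boundary term through the trace inequality on $U$. Consequently the estimate is valid on $U\cap B_r(0)$ including the region near $\Sigma$ and near the oblique side, which is exactly what is needed.
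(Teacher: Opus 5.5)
Your proposal is correct and follows essentially the same route as the paper: apply Lemma~\ref{lem-bd0} (with $F=0$) on the bounded set $U\cap B_R(0)$ and combine it with the gradient bound of Lemma~\ref{corol-grad} on $U\cap B_r(0)\supset V$. Your remark that Lemma~\ref{corol-grad} controls the gradient up to $\partial U$, because its cutoffs lie in $C^\infty_c(\RR^2)$, is accurate and is exactly what makes this work.
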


\begin{proof}
	Due to the boundedness of $V$ there is $j_0\in\NN$ with $V\subset U_{r_j}$ for all $j\ge j_0$, therefore, the respective $u_j$ are defined on $V$. Choose $0<r<R$ with
	\[
	V\subset U\cap B_r(0)=:V_r
	\]
	and  increase the above $j_0$ to have
	\[
	V_R:=U\cap B_R(0)\subset U_{r_j}
	\text{ for all }j\ge j_0.
	\]
	By Lemma~\ref{lem-bd0} one obtains $\|u_j\|^2_{L^2(V_R)}\to 0$, and Lemma~\ref{corol-grad} yields $\|\nabla u_j\|^2_{L^2(V_r)}\to 0$, and the sought claim follows from
	\[
	\|u_j\|_{L^2(V)}+\|\nabla u_j\|_{L^2(V)}\le \|u_j\|_{L^2(V_R)}+\|\nabla u_j\|_{L^2(V_r)}. \qedhere
	\]
\end{proof}

\begin{lemma}\label{lem-bound0}
	If $F=0$, then for any fixed bounded measurable $V\subset U$ one has $\|u_j\|_{L^\infty(V)}\xrightarrow{j\to\infty}0$.
\end{lemma}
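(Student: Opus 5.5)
The plan is to upgrade the $L^2$/$H^1$ convergence of Lemma~\ref{lem-bd} to $L^\infty$ convergence using the $L^2$--$L^\infty$ estimate of Proposition~\ref{prop-nittka}. That proposition is stated for a bounded Lipschitz domain with Robin-type boundary conditions on its whole boundary. We therefore cannot apply it to $u_j$ itself on a fixed subdomain: the artificial boundary created by cutting $U$ carries no boundary condition. Instead we localize with a cut-off and treat the localized function as the solution of a problem with an extra right-hand side.

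Fix a bounded $V\subset U$ and choose $\rho>0$ with $V\subset U\cap B_\rho(0)$. Pick $\eta\in C^\infty_c(\RR^2)$ with $\eta=1$ on $B_\rho(0)$ and $\supp\eta\subset B_{2\rho}(0)$. Since both boundary pieces of $U$ near the origin are straight rays, we may also take $\eta$ with $\partial_\nu\eta=0$ on $\partial U$. For instance, $\eta$ can be radial, $\eta(x,y)=h(\sqrt{x^2+y^2})$: the rays of $\partial U$ pass through the origin, so the normal derivative of a radial function vanishes on them. Let $D:=U\cap B_{3\rho}(0)$, a bounded Lipschitz domain (a circular sector). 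For $j$ large we have $D\subset U_{r_j}$. Set $w_j:=\eta u_j$, which vanishes near the arc $\partial D\cap\partial B_{3\rho}(0)$. A direct computation in weak form then shows that $w_j\in H^1(D)$ solves
\[
-\Delta w_j=f_j:=\lambda_j\eta u_j-2\nabla\eta\cdot\nabla u_j-u_j\Delta\eta \text{ in } D,
\]
with boundary conditions $\partial_\nu w_j=w_j$ on $\Sigma\cap\partial D$ and $\partial_\nu w_j=0$ on the rest of $\partial D$. The condition on the oblique side uses $\partial_\nu\eta=0$ there, and the condition on the arc holds trivially because $w_j$ vanishes near it. This is a problem of the form covered by Proposition~\ref{prop-nittka} with $\beta=-1$ on $\Sigma\cap\partial D$ and $\beta=0$ elsewhere, so $\beta\in L^\infty(\partial D)$. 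The term $\nabla\eta\cdot\nabla u_j$ is only in $L^2$, which is exactly what the proposition requires. I would verify this weak formulation carefully by testing with $\varphi\in H^1(D)$ and using the identity for $u_j$ with test function $\eta\varphi$. This bookkeeping of boundary terms is the step I expect to need the most care: the weak form pairs $\nabla u_j\cdot\nabla\eta$ with $\varphi$, and the boundary term $\int u_j\varphi\,\partial_\nu\eta$ disappears only thanks to $\partial_\nu\eta=0$.

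Proposition~\ref{prop-nittka} then gives
\[
\|u_j\|_{L^\infty(V)}\le\|w_j\|_{L^\infty(D)}\le C\bigl(\|w_j\|_{L^2(D)}+\|f_j\|_{L^2(D)}\bigr)\le C'\bigl(\|u_j\|_{L^2(D)}+\|\nabla u_j\|_{L^2(D)}\bigr),
\]
using $|\lambda_j|\le 2$ and the boundedness of $\eta$ and its derivatives. Here $C$ and $C'$ are independent of $j$, since $D$, $\beta$ and $\eta$ are fixed. By Lemma~\ref{lem-bd} applied to the bounded set $D$, the right-hand side tends to $0$, which proves the claim.

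If arranging $\partial_\nu\eta=0$ turned out to be inconvenient, the fallback is to keep the boundary term $u_j\partial_\nu\eta$ as inhomogeneous Neumann data $g$. Theorem~3.14 in \cite{nittka} allows such data ($g\ne0$), but that would go beyond the specialization recorded in Proposition~\ref{prop-nittka}, so the radial choice of $\eta$ is preferable.
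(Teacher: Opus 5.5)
Your proposal is correct and follows essentially the same route as the paper. The paper also uses a radial cut-off, so that its normal derivative vanishes on the straight sides of the sector, and applies the $L^2$--$L^\infty$ estimate of Proposition~\ref{prop-nittka} to the localized function with right-hand side $(\lambda_j\eta-\Delta\eta)u_j-2\nabla\eta\cdot\nabla u_j$, before concluding with Lemma~\ref{lem-bd}. The only difference is cosmetic: the paper additionally passes through Lemma~\ref{corol-grad} to bound $\|\nabla u_j\|_{L^2}$ by $\|u_j\|_{L^2}$ on a larger set, whereas you use directly that Lemma~\ref{lem-bd} already gives convergence of the gradient norms, which is equally valid.
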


\begin{proof}
During the proof, for $r>0$ denote
\[
V_r:=U\cap B_r(0).
\]

Pick $r>0$ with $V\subset V_r$, then pick $\chi\in C^\infty(\RR)$ with $0\le\chi\le 1$, $\chi(t)=1$ for $t\le r$ and $\supp\chi\subset(-\infty,r+1)$. The function
\[
\rho: \ (x,y)\mapsto \chi(\sqrt{x^2+y^2})
\]
satisfies $\partial_\nu\rho=0$ on $\partial V_{r+1}$. It follows that, for all sufficiently large $j$ (such that $r_j>r+1$), the functions $v_j:=\rho u_j$ are weak solutions of
\begin{gather*}
-\Delta v_j=f_j \text{ in } V_{r+1},\quad \partial_\nu v_j-\beta v_j=0 \text{ on }\partial V_{r+1},\\
f_j=(-\Delta\rho+\lambda_j\rho)u_j-2\nabla\rho\cdot\nabla u_j,\quad \beta=\begin{cases}
	1 & \text{ on } \{y=0\},\\
	0 & \text{ on }\partial V_{r+1}\setminus\{y=0\}.
\end{cases}
\end{gather*}
Due to the $L^2$--$L^\infty$ estimate (Proposition~\ref{prop-nittka}) one finds $C_1>0$ such that for any $j$ one has
\[
\|v_j\|_{L^\infty(V_{r+1})}\le C_1\big( \|v_j\|_{L^2(V_{r+1})}+\|f_j\|_{L^2(V_{r+1})}\big).
\]
With a suitable $C_2\ge 0$ one estimates for all sufficiently large $j$:
\[
\|v_j\|_{L^2(V_{r+1})}\le \|u_j\|_{L^2(V_{r+1})},\quad
\|f_j\|_{L^2(V_{r+1})}\le C_2\big( \|u_j\|_{L^2(V_{r+1})}+\|\nabla u_j\|_{L^2(V_{r+1})}\big).
\]
Furthermore, for a suitable $C_3>0$ and all sufficiently large $j$ we have, by Lemma~\ref{corol-grad},
\[
\|\nabla u_j\|_{L^2(V_{r+1})}\le C_3 \|u_j\|_{L^2(V_{r+2})}.
\]
By combining these inequalities, for all sufficiently large $j$ we arrive at
\begin{align*}
\|f_j\|_{L^2(V_{r+1})}&\le C_2\big( \|u_j\|_{L^2(V_{r+1})}+C_3\|u_j\|_{L^2(V_{r+2})}\big)\\
&\le C_2(C_3+1)\|u_j\|_{L^2(V_{r+2})},\\
\|v_j\|_{L^\infty(V_{r+1})}&\le C_1\big( \|u_j\|_{L^2(V_{r+1})}+C_2(C_3+1)\|u_j\|_{L^2(V_{r+2})}\big)\\
&\le C_1\big(1+C_2(C_3+1)\big)\|u_j\|_{L^2(V_{r+2})}\xrightarrow[\text{Lem.~\ref{lem-bd}}]{j\to\infty}0,
\end{align*}
and it remains to note that $\|u_j\|_{L^\infty(V)}\le \|v_j\|_{L^\infty(V_{r+1})}$.
\end{proof}

\begin{lemma}[Agmon estimate in $y$]
	There exist $\gamma>0$, $C>0$ and $j_0\in\NN$ such that for any $j\ge j_0$ one has
	\begin{equation}
		\label{eq-agmon1}
	\iint_{U_{r_j}} \Big(\dfrac{e^{\gamma y}}{1+\gamma x}\Big)^2\Big(\big|u_j(x,y)\big|^2+\big|\nabla u_j(x,y)\big|^2	\Big)\,\dxy\le C.
	\end{equation}
\end{lemma}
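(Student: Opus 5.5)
The plan is a weighted energy (Agmon-type) identity. Set
\[
w(x,y):=\frac{e^{\gamma y}}{1+\gamma x},
\]
where $\gamma>0$ is small and will be fixed below. On the bounded domain $U_{r_j}$ the function $w$ is smooth and bounded together with its gradient. Hence $w^2u_j\in H^1(U_{r_j})$, and we may use $\varphi:=w^2u_j$ as a test function in the weak eigenvalue equation \eqref{test1}. The same algebra as in \eqref{etau}, with $\eta$ replaced by $w$, gives
\[
\iint_{U_{r_j}}\big|\nabla(wu_j)\big|^2\dxy-\iint_{U_{r_j}}|\nabla w|^2u_j^2\,\dxy-\int_{\Sigma_{r_j}}(wu_j)^2\,\dd s=\lambda_j\iint_{U_{r_j}}(wu_j)^2\,\dxy .
\]

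Next we bound the terms using the explicit form of $w$. A direct computation gives
\[
\nabla w=w\Big(-\frac{\gamma}{1+\gamma x},\,\gamma\Big),\qquad\text{so}\qquad |\nabla w|^2\le 2\gamma^2w^2 .
\]
We assumed $\lambda_j<-\frac12$ for all $j$, so $-\lambda_j>\frac12$. Moving terms across, we obtain
\[
\iint_{U_{r_j}}\big|\nabla(wu_j)\big|^2\dxy+\Big(\frac12-2\gamma^2\Big)\iint_{U_{r_j}}(wu_j)^2\,\dxy\le\int_{\Sigma_{r_j}}(wu_j)^2\,\dd s .
\]
We then fix $\gamma$ with $2\gamma^2<\frac14$, so that the left-hand side dominates $\frac14\|wu_j\|^2_{H^1(U_{r_j})}$.

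The key point is the role of the factor $(1+\gamma x)^{-1}$ in the weight. On $\Sigma$ we have $y=0$, so $w=(1+\gamma x)^{-1}$ there. By the normalization \eqref{unorm} we have $|u_j|\le1$, and this bound also holds for the boundary trace, since $u_j$ is continuous up to the boundary (Hölder regularity, e.g.\ from the estimates behind Proposition~\ref{prop-nittka}). Therefore
\[
\int_{\Sigma_{r_j}}(wu_j)^2\,\dd s\le\int_0^\infty\frac{\dd x}{(1+\gamma x)^2}=\frac1\gamma ,
\]
uniformly in $j$. This gives $\|wu_j\|^2_{L^2}+\|\nabla(wu_j)\|^2_{L^2}\le 4/\gamma$.

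To pass to \eqref{eq-agmon1}, we write $w\nabla u_j=\nabla(wu_j)-u_j\nabla w$. Using $|u_j\nabla w|\le\sqrt2\,\gamma\,w|u_j|$, we get
\[
\iint w^2|\nabla u_j|^2\le 2\|\nabla(wu_j)\|^2+4\gamma^2\|wu_j\|^2\le C ,
\]
which yields \eqref{eq-agmon1}. I expect the only delicate point to be justifying the pointwise bound $|u_j|\le1$ on $\Sigma$, together with the admissibility of the test function. The first follows from continuity of $u_j$ up to $\Sigma_{r_j}$, or alternatively by approximating the trace. The second is fine because $U_{r_j}$ is bounded and $w$ is Lipschitz on $\overline{U_{r_j}}$. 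The choice of $j_0$ only serves to ensure $\lambda_j<-\frac12$.
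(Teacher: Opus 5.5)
Your proposal is correct and follows the paper's proof essentially step for step: the same weight $e^{\gamma y}/(1+\gamma x)$, the test function $w^2u_j$, the bound $|\nabla w|^2\le 2\gamma^2w^2$, the boundary term controlled by $\int_0^\infty(1+\gamma x)^{-2}\,\dd x=1/\gamma$, and the same recovery of $w\nabla u_j$ from $\nabla(wu_j)$. The only differences are cosmetic. You fix $\gamma$ using $\lambda_j<-\tfrac12$ rather than $\lambda_j\to-1$, and you spell out the trace bound $|u_j|\le1$ on $\Sigma$, which the paper uses without comment.
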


\begin{proof}
For $\gamma>0$ to be chosen later consider the function
\[
\eta:\ (x,y)\mapsto \dfrac{e^{\gamma y}}{1+\gamma x},
\]	
which is smooth on $U_{r_j}$. Recall that for any $\varphi\in H^1(U_{r_j})$ one has
	\[
		\iint_{U_{r_j}}\nabla u_j\cdot\nabla\varphi\,\dxy
		-\int_{\Sigma_{r_j}} u_j\varphi\,\dd s=
		\lambda_j\iint_{U_{r_j}}u_j\varphi\,\dxy.
	\]
In particular, for $\varphi:=\eta^2 u_j$ with the help of \eqref{etau} one obtains
\begin{multline}
	\label{eq-star1}
\iint_{U_{r_j}}\big|\nabla (\eta u_j)\big|^2\,\dxy
-\int_{\Sigma_{r_j}} (\eta u_j)^2\,\dd s\\
=
\lambda_j\iint_{U_{r_j}}(\eta u_j)^2\,\dxy
+\iint_{U_{r_j}}|\nabla \eta|^2 u_j^2\,\dxy.
\end{multline}
We have, for any $(x,y)\in U_{r_j}$,
\begin{align}
\nabla\eta(x,y)&=\Big(-\dfrac{\gamma e^{\gamma y}}{(1+\gamma x)^2},\dfrac{\gamma e^{\gamma y}}{1+\gamma x}\Big)=\gamma \eta(x,y) \Big(-\dfrac{1}{1+\gamma x},1\Big), \nonumber\\
\big|\nabla\eta(x,y)\big|^2&= \gamma^2 \eta(x,y)^2 \Big(\dfrac{1}{(1+\gamma x)^2}+1\Big)\le 2\gamma^2\eta(x,y)^2,\label{etanabla}
\end{align}
and the substitution into \eqref{eq-star1} yields
\begin{align*}
\iint_{U_{r_j}}\big|\nabla (\eta u_j)\big|^2\,\dxy
+(-\lambda_j-2\gamma^2)&\iint_{U_{r_j}}(\eta u_j)^2\,\dxy\le\int_{\Sigma_{r_j}} (\eta u_j)^2\,\dd s\\
&\le\int_{\Sigma_{r_j}} \dfrac{1}{(1+\gamma x)^2}\,\dd s\le \int_0^\infty \dfrac{\dd x}{(1+\gamma x)^2}=\dfrac{1}{\gamma}.
\end{align*}

Now we pick $\gamma\in(0,\frac{1}{\sqrt{2}})$ and choose any $\delta\in(0,1-2\gamma^2)$, then due to $\lambda_j\to-1$ one can find $j_0\in\NN$ such that $-\lambda_j-2\gamma^2\ge \delta$ for all $j\ge j_0$, and the preceding inequality gives
\begin{equation}
	\label{etauj}
\iint_{U_{r_j}}\big|\nabla (\eta u_j)\big|^2\,\dxy
+\delta \iint_{U_{r_j}}(\eta u_j)^2\,\dxy\le \dfrac{1}{\gamma} \text{ for } j\ge j_0.
\end{equation}
In particular, 
\begin{equation}
	\label{eq-umu1}
\iint_{U_{r_j}}(\eta u_j)^2\,\dxy\le \dfrac{1}{\delta \gamma} \text{ for }j\ge j_0.
\end{equation}

Furthermore, $\eta \nabla u_j=\nabla(\eta u_j)-u_j \nabla \eta$, hence
\begin{gather*}
|\eta \nabla u_j|^2\le 2\big|\nabla(\eta u_j)\big|^2+2 |\nabla \eta|^2u_j^2\stackrel{\eqref{etanabla}}{\le}
2\big|\nabla(\eta u_j)\big|^2+4 \gamma^2 \eta^2u_j^2,
\end{gather*}
and for $j\ge j_0$ we obtain with the help of \eqref{etauj} and \eqref{eq-umu1}:
\begin{align*}
\iint_{U_{r_j}} \eta^2\big|\nabla u_j(x,y)\big|^2\,\dxy
&\le 2\iint_{U_{r_j}} \big|\nabla(\eta u_j)\big|^2\,\dxy+4\gamma^2\iint_{U_{r_j}} (\eta u_j)^2\,\dxy\\
&
\le \dfrac{2}{\gamma} + 4\gamma^2\dfrac{1}{\delta \gamma}\equiv \dfrac{2}{\gamma} + \dfrac{4\gamma}{\delta}. 
\end{align*}
By combining with \eqref{eq-umu1} we arrive at the sought estimate \eqref{eq-agmon1}
with
\[
C:=\dfrac{1}{\delta\gamma}+\dfrac{2}{\gamma} + \dfrac{4\gamma}{\delta}. \qedhere
\]
\end{proof}	

Recall that the point $A_r$ is the upper right corner of the triangle $U_r$, see \eqref{eq-ar} and Fig.~\ref{fig-usr}.

\begin{lemma}\label{lem-upper0}
	For any fixed $R>0$ one has
	$\lim\limits_{j\to\infty}\|u_j\|_{L^\infty(U_{r_j}\cap B_R(A_{r_j}))}=0$.
\end{lemma}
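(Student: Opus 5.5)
The plan is to combine the Agmon estimate \eqref{eq-agmon1} with the $L^2$--$L^\infty$ estimate of Proposition~\ref{prop-nittka}. The key point is that the corner $A_{r_j}=(r_j,r_j\tan\theta)$ lies at height $y=r_j\tan\theta\to\infty$, where the Agmon weight is exponentially large. This step does not need the assumption $F=0$.

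\emph{Step 1: small $H^1$-norm near the corner.} Fix $R>0$ and write $D_j:=U_{r_j}\cap B_{R+1}(A_{r_j})$. For $j$ large we have $r_j\tan\theta>R+1$, so $D_j$ does not meet $\Sigma$. Moreover, on $D_j$ one has $y\ge r_j\tan\theta-R-1$ and $x\le r_j+R+1$. Hence the weight in \eqref{eq-agmon1} satisfies
\[
\Big(\dfrac{e^{\gamma y}}{1+\gamma x}\Big)^2\ge \dfrac{e^{2\gamma(r_j\tan\theta-R-1)}}{\big(1+\gamma(r_j+R+1)\big)^2} \quad\text{on } D_j .
\]
Together with \eqref{eq-agmon1} this gives
\[
\|u_j\|^2_{L^2(D_j)}+\|\nabla u_j\|^2_{L^2(D_j)}\le C\big(1+\gamma(r_j+R+1)\big)^2e^{-2\gamma(r_j\tan\theta-R-1)}\xrightarrow{j\to\infty}0 .
\]

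\emph{Step 2: reduction to a fixed domain.} For large $j$, the set $D_j-A_{r_j}$ is a $j$-independent domain $D$. It is the intersection of $B_{R+1}(0)$ with the wedge at the origin bounded by the ray in direction $(-\cos\theta,-\sin\theta)$ and the downward vertical ray, and it has opening angle $\frac{\pi}{2}-\theta$. So $D$ is a bounded Lipschitz domain.

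Choose $\chi\in C^\infty(\RR)$ with $\chi=1$ on $(-\infty,R]$ and $\supp\chi\subset(-\infty,R+1)$, and set $\rho_j(z):=\chi(|z-A_{r_j}|)$. Both straight boundary pieces of $D_j$ lie on lines through $A_{r_j}$, so the radial function $\rho_j$ satisfies $\partial_\nu\rho_j=0$ there. Near the circular arc, $\rho_j$ vanishes. Since $u_j$ satisfies Neumann conditions on the oblique side and on $\{x=r_j\}$, the function $v_j:=\rho_j u_j$ is a weak solution of
\[
-\Delta v_j=f_j\ \text{in } D_j,\quad \partial_\nu v_j=0 \ \text{on }\partial D_j,\qquad f_j:=(-\Delta\rho_j+\lambda_j\rho_j)u_j-2\nabla\rho_j\cdot\nabla u_j .
\]
This mirrors the proof of Lemma~\ref{lem-bound0}, except that now $\beta=0$.

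\emph{Step 3: conclusion.} Translate everything to $D$ and apply Proposition~\ref{prop-nittka} on $D$ with $\beta=0$. Because $D$ does not depend on $j$, the constant is uniform in $j$. Using $|\lambda_j|\le 2$ and the $j$-independent bounds on $\rho_j$ and its derivatives, we get
\[
\|u_j\|_{L^\infty(U_{r_j}\cap B_R(A_{r_j}))}\le\|v_j\|_{L^\infty(D_j)}\le C'\big(\|u_j\|_{L^2(D_j)}+\|\nabla u_j\|_{L^2(D_j)}\big),
\]
and the right-hand side tends to zero by Step~1.

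The only delicate point is that the $L^2$--$L^\infty$ constant must be uniform in $j$. This is handled by the translation to the fixed domain $D$ and by choosing the cutoff radial about the corner, which keeps the Neumann condition exact on both straight sides. Everything else is a direct consequence of the Agmon estimate.
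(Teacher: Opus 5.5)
Your proposal is correct and follows the same route as the paper. The ingredients match: a cutoff radial about $A_{r_j}$ (so that $\partial_\nu\rho_j=0$ on both straight sides), the $L^2$--$L^\infty$ estimate of Proposition~\ref{prop-nittka} with a $j$-uniform constant coming from translation invariance, and the Agmon estimate \eqref{eq-agmon1} to make the $H^1$-norm near the corner exponentially small; the only differences are cosmetic (the paper uses the outer radius $2R$ instead of $R+1$ and requires $\min\{r_j,r_j\tan\theta\}>2R$).
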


\begin{proof}
For $R>0$ denote
\[
V_{j,R}:=U_{r_j}\cap B_R(A_{r_j}),
\]
see Fig.~\ref{fig-vjr}.
Pick $R>0$ and a cutoff function $\chi\in C^\infty(\RR)$
with $\chi(t)=1$ for $t\le R$ and $\supp\chi\subset(-\infty,2R)$, and consider the functions
\[
\rho_j:\ (x,y)\mapsto \chi\Big( \big|(x,y)-A_{r_j}\big|\Big).
\]
Assume that $j_0$ is large enough such that $\min\{r_j,r_j\tan\theta\}>2R$ 
and that the Agmon estimate \eqref{eq-agmon1} holds with some $C>0$ and $\gamma>0$
for all $j\ge j_0$.

\begin{figure}
	
	\centering
	
	\tikzset{every picture/.style={line width=0.75pt}} 
	
	\begin{minipage}{0.5\textwidth}\centering
	\scalebox{0.8}{\begin{tikzpicture}[x=0.75pt,y=0.75pt,yscale=-1,xscale=1]
			
			\draw  [fill={rgb, 255:red, 155; green, 155; blue, 155 }  ,fill opacity=0.66 ] (313.84,175.81) .. controls (293.37,175.78) and (275.81,163.22) .. (268.25,145.31) -- (313.91,125.62) -- cycle ;
			\draw [color={rgb, 255:red, 155; green, 155; blue, 155 }  ,draw opacity=1 ] (104.7,210.56) -- (352.38,210.56)(120.01,100.16) -- (120.01,231.45) (345.38,205.56) -- (352.38,210.56) -- (345.38,215.56) (115.01,107.16) -- (120.01,100.16) -- (125.01,107.16)  ;
			\draw  [color={rgb, 255:red, 255; green, 255; blue, 255 }  ,draw opacity=1 ][fill={rgb, 255:red, 0; green, 0; blue, 0 }  ,fill opacity=1 ] (310.28,125.62) .. controls (310.28,123.61) and (311.91,121.98) .. (313.91,121.98) .. controls (315.92,121.98) and (317.55,123.61) .. (317.55,125.62) .. controls (317.55,127.63) and (315.92,129.26) .. (313.91,129.26) .. controls (311.91,129.26) and (310.28,127.63) .. (310.28,125.62) -- cycle ;
			\draw  [color={rgb, 255:red, 255; green, 255; blue, 255 }  ,draw opacity=1 ][fill={rgb, 255:red, 0; green, 0; blue, 0 }  ,fill opacity=1 ] (310.28,211.63) .. controls (310.28,209.62) and (311.91,208) .. (313.91,208) .. controls (315.92,208) and (317.55,209.62) .. (317.55,211.63) .. controls (317.55,213.64) and (315.92,215.27) .. (313.91,215.27) .. controls (311.91,215.27) and (310.28,213.64) .. (310.28,211.63) -- cycle ;
			\draw  [color={rgb, 255:red, 0; green, 0; blue, 0 }  ,draw opacity=1 ][fill={rgb, 255:red, 155; green, 155; blue, 155 }  ,fill opacity=0.34 ][line width=1.5]  (313.91,211.63) -- (120.06,210.78) -- (313.91,125.62) -- cycle ;
			\draw  [draw opacity=0] (159.18,194.22) .. controls (161.28,199.25) and (162.44,204.77) .. (162.44,210.56) .. controls (162.44,210.71) and (162.44,210.86) .. (162.44,211.01) -- (120.01,210.56) -- cycle ; \draw   (159.18,194.22) .. controls (161.28,199.25) and (162.44,204.77) .. (162.44,210.56) .. controls (162.44,210.71) and (162.44,210.86) .. (162.44,211.01) ;  
			\draw  [dash pattern={on 4.5pt off 4.5pt}] (263.83,125.62) .. controls (263.83,97.96) and (286.25,75.54) .. (313.91,75.54) .. controls (341.58,75.54) and (364,97.96) .. (364,125.62) .. controls (364,153.28) and (341.58,175.7) .. (313.91,175.7) .. controls (286.25,175.7) and (263.83,153.28) .. (263.83,125.62) -- cycle ;
			\draw [color={rgb, 255:red, 0; green, 0; blue, 0 }  ,draw opacity=1 ]   (290.38,88.57) -- (309.78,120.76) ;
			\draw [shift={(311.33,123.33)}, rotate = 238.92] [fill={rgb, 255:red, 0; green, 0; blue, 0 }  ,fill opacity=1 ][line width=0.08]  [draw opacity=0] (8.93,-4.29) -- (0,0) -- (8.93,4.29) -- cycle    ;
			\draw [shift={(288.83,86)}, rotate = 58.92] [fill={rgb, 255:red, 0; green, 0; blue, 0 }  ,fill opacity=1 ][line width=0.08]  [draw opacity=0] (8.93,-4.29) -- (0,0) -- (8.93,4.29) -- cycle    ;
			
			\draw (329.12,188.95) node [anchor=north west][inner sep=0.75pt]  [color={rgb, 255:red, 128; green, 128; blue, 128 }  ,opacity=1 ]  {$x$};
			\draw (132.29,102.57) node [anchor=north west][inner sep=0.75pt]  [color={rgb, 255:red, 128; green, 128; blue, 128 }  ,opacity=1 ]  {$y$};
			\draw (240.5,179.8) node [anchor=north west][inner sep=0.75pt]    {$U_{r_{j}}$};
			\draw (320.99,113.42) node [anchor=north west][inner sep=0.75pt]    {$A_{r_{j}}$};
			\draw (309.63,214.09) node [anchor=north west][inner sep=0.75pt]    {$r_{j}$};
			\draw (165.13,190.9) node [anchor=north west][inner sep=0.75pt]    {$\theta $};
			\draw (303.33,88.07) node [anchor=north west][inner sep=0.75pt]    {$R$};
			\draw (279.33,142.4) node [anchor=north west][inner sep=0.75pt]    {$V_{j,R}$};

		\end{tikzpicture}}	
	\end{minipage}\begin{minipage}{0.45\textwidth}\caption{The sets $V_{j,R}$.}\label{fig-vjr}\end{minipage}
	
\end{figure}

By construction, $\rho_j=1$ in $V_{j,R}$. Furthermore, as $\rho_j$ is radial with respect to $A_{r_j}$ and vanishes near $\big|(x,y)-A_{r_j}\big|=2R$,
one has $\partial_\nu \rho_j=0$ on $\partial V_{j,2R}$. 
It follows that the functions $v_j:=\rho_j u_j$
are weak solutions of
\begin{gather*}
-\Delta v_j =f_j \text{ in } V_{j,2R},\quad \partial_\nu v_j=0 \text{ on }\partial V_{j,2R},\\
f_j:=(-\Delta \rho_j+\lambda_j\rho_j)u_j-2\nabla\rho_j\cdot\nabla u_j.
\end{gather*}
Note that $V_{j,2R}$ with various $j$ are translations of each other and that the Laplacian is translation-invariant as well.
By the $L^2$-$L^\infty$ estimate (Proposition~\ref{prop-nittka}) one can find $C_1>0$ such that for all $j\ge j_0$
one has
\[
\|v_j\|^2_{L^\infty(V_{j,2R})}\le C_1\big(\|v_j\|^2_{L^2(V_{j,2R})}+\|f_j\|^2_{L^2(V_{j,2R})}\big).
\]
As the $L^\infty$-norms of $\rho_j,\nabla\rho_j,\Delta\rho_j$ on $V_{j,2R}$ do not depend on $j$ and $\lambda_j$
are uniformly bounded, the summands on the right hand-side can be controlled by $\|u_j\|_{L^2(V_{j,2R})}^2$ and $\|\nabla u_j\|_{L^2(V_{j,2R})}^2$ uniformly in $j\ge j_0$. 
We further note that $u_j=v_j$ in $V_{j,R}$, hence, $\|u_j\|_{L^\infty(V_{j,R})}\le\|v_j\|_{L^\infty(V_{j,2R})}$.
Therefore, with a suitable fixed $C_2>0$ and for any $j\ge j_0$ one obtains
\begin{equation}
	\label{ujinf00}
\|u_j\|^2_{L^\infty(V_{j,R})}\le C_2\big(\|u_j\|_{L^2(V_{j,2R})}^2+\|\nabla u_j\|_{L^2(V_{j,2R})}^2\big).
\end{equation}
For $(x,y)\in V_{j,2R}$ one has $x<r_j$ and $y>r_j\tan\theta-2R$, and the Agmon estimate \eqref{eq-agmon1}
implies
\begin{align*}
	\|u_j\|_{L^2(V_{j,2R})}^2&+\|\nabla u_j\|_{L^2(V_{j,2R})}^2=\,\iint_{V_{j,2R}} \big(|u_j|^2+|\nabla u_j|^2\big)\dxy\\
	&\le \Big(\dfrac{1+\gamma r_j}{e^{\gamma (r_j\tan\theta-2R)}}\Big)^2\iint_{V_{j,2R}} \Big(\dfrac{e^{\gamma y}}{1+\gamma x}\Big)^2\big(|u_j|^2+|\nabla u_j|^2\big)\dxy\\
	&\le \Big(\dfrac{1+\gamma r_j}{e^{\gamma (r_j\tan\theta-2R)}}\Big)^2\iint_{U_{r_j}} \Big(\dfrac{e^{\gamma y}}{1+\gamma x}\Big)^2\big(|u_j|^2+|\nabla u_j|^2\big)\dxy\\
	&\le Ce^{4R\gamma}(1+\gamma r_j)^2 e^{-2\gamma r_j\tan\theta},
\end{align*}
and the substitution into \eqref{ujinf00} yields
\[
\|u_j\|_{L^\infty(V_{j,R})}^2\le C_2Ce^{4R\gamma}(1+\gamma r_j)^2 e^{-2\gamma r_j\tan\theta}\xrightarrow{j\to\infty}0. \qedhere
\]
\end{proof}	

\begin{figure}
	
	\centering
	
	\tikzset{every picture/.style={line width=0.75pt}} 
	
	\begin{minipage}{0.45\textwidth}\centering
		\scalebox{0.85}{\begin{tikzpicture}[x=0.75pt,y=0.75pt,yscale=-1,xscale=1]
				
				\draw [color={rgb, 255:red, 155; green, 155; blue, 155 }  ,draw opacity=1 ] (86.7,185.56) -- (334.38,185.56)(102.01,75.16) -- (102.01,206.45) (327.38,180.56) -- (334.38,185.56) -- (327.38,190.56) (97.01,82.16) -- (102.01,75.16) -- (107.01,82.16)  ;
				\draw  [color={rgb, 255:red, 0; green, 0; blue, 0 }  ,draw opacity=1 ][fill={rgb, 255:red, 155; green, 155; blue, 155 }  ,fill opacity=0.59 ][line width=0.75]  (295.91,186.63) -- (102.06,185.78) -- (295.91,100.62) -- cycle ;
				\draw  [color={rgb, 255:red, 255; green, 255; blue, 255 }  ,draw opacity=1 ][fill={rgb, 255:red, 0; green, 0; blue, 0 }  ,fill opacity=1 ] (292.28,100.62) .. controls (292.28,98.61) and (293.91,96.98) .. (295.91,96.98) .. controls (297.92,96.98) and (299.55,98.61) .. (299.55,100.62) .. controls (299.55,102.63) and (297.92,104.26) .. (295.91,104.26) .. controls (293.91,104.26) and (292.28,102.63) .. (292.28,100.62) -- cycle ;
				\draw  [color={rgb, 255:red, 255; green, 255; blue, 255 }  ,draw opacity=1 ][fill={rgb, 255:red, 0; green, 0; blue, 0 }  ,fill opacity=1 ] (292.28,186.63) .. controls (292.28,184.62) and (293.91,183) .. (295.91,183) .. controls (297.92,183) and (299.55,184.62) .. (299.55,186.63) .. controls (299.55,188.64) and (297.92,190.27) .. (295.91,190.27) .. controls (293.91,190.27) and (292.28,188.64) .. (292.28,186.63) -- cycle ;
				\draw  [color={rgb, 255:red, 0; green, 0; blue, 0 }  ,draw opacity=1 ][fill={rgb, 255:red, 255; green, 255; blue, 255 }  ,fill opacity=0.34 ][dash pattern={on 1.5pt off 1.5pt}][line width=0.75]  (295.91,186.63) -- (182.73,186.42) -- (295.86,135.17) -- cycle ;
				\draw  [color={rgb, 255:red, 0; green, 0; blue, 0 }  ,draw opacity=1 ][line width=1.5]  (295.91,186.63) -- (102.06,185.78) -- (295.91,100.62) -- cycle ;
				\draw    (260.88,121) -- (260.88,147) ;
				\draw [shift={(260.88,149)}, rotate = 270] [color={rgb, 255:red, 0; green, 0; blue, 0 }  ][line width=0.75]    (10.93,-3.29) .. controls (6.95,-1.4) and (3.31,-0.3) .. (0,0) .. controls (3.31,0.3) and (6.95,1.4) .. (10.93,3.29)   ;
				\draw [shift={(260.88,119)}, rotate = 90] [color={rgb, 255:red, 0; green, 0; blue, 0 }  ][line width=0.75]    (10.93,-3.29) .. controls (6.95,-1.4) and (3.31,-0.3) .. (0,0) .. controls (3.31,0.3) and (6.95,1.4) .. (10.93,3.29)   ;
				\draw  [draw opacity=0] (141.18,169.22) .. controls (143.28,174.25) and (144.44,179.77) .. (144.44,185.56) .. controls (144.44,185.71) and (144.44,185.86) .. (144.44,186.01) -- (102.01,185.56) -- cycle ; \draw   (141.18,169.22) .. controls (143.28,174.25) and (144.44,179.77) .. (144.44,185.56) .. controls (144.44,185.71) and (144.44,185.86) .. (144.44,186.01) ;  
				
				\draw (311.12,163.95) node [anchor=north west][inner sep=0.75pt]  [color={rgb, 255:red, 128; green, 128; blue, 128 }  ,opacity=1 ]  {$x$};
				\draw (114.29,77.57) node [anchor=north west][inner sep=0.75pt]  [color={rgb, 255:red, 128; green, 128; blue, 128 }  ,opacity=1 ]  {$y$};
				\draw (261.5,160.8) node [anchor=north west][inner sep=0.75pt]    {$U_{r}$};
				\draw (302.99,88.42) node [anchor=north west][inner sep=0.75pt]    {$A_{r}$};
				\draw (291.63,193) node [anchor=north west][inner sep=0.75pt]    {$r$};
				\draw (196,145.8) node [anchor=north west][inner sep=0.75pt]    {$U_{r,h}$};
				\draw (268.38,120.4) node [anchor=north west][inner sep=0.75pt]    {$h$};
				\draw (147.13,165.9) node [anchor=north west][inner sep=0.75pt]    {$\theta $};

			\end{tikzpicture}
		}	
	\end{minipage}
	\begin{minipage}{0.5\textwidth}
		\caption{\small The set $U_{r,h}\subset U_r$.}\label{fig-uh}
	\end{minipage}
\end{figure}

\begin{lemma}\label{lem410}
Assume that $F=0$. For $r,h>0$ denote
	\[
	U_{r,h}:=U_r\cap\{(x,y):\ y> x\tan\theta-h\},
	\]
	see Fig.~\ref{fig-uh}. Then for any fixed $h>0$ one has
	$\lim\limits_{j\to\infty}\|u_j\|_{L^\infty(U_{r_j,h})}=0$.
\end{lemma}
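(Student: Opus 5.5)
The plan is to split the strip $U_{r_j,h}$ into three pieces and show that $u_j$ is uniformly small on each. Fix $h>0$ and two parameters $X>0$ and $R>0$, to be chosen below. The pieces are:
\begin{itemize}
\item[(i)] the part with $x\le X$;
\item[(ii)] the part within distance $R$ of the corner $A_{r_j}$;
\item[(iii)] the remaining middle part, where $X<x$ and $|(x,y)-A_{r_j}|>R$.
\end{itemize}
Piece (i) is contained in the fixed bounded set $U\cap\{x<X\}$, so Lemma~\ref{lem-bound0} (which uses $F=0$) gives $\sup|u_j|\to0$ there. Piece (ii) is exactly $V_{j,R}$, and Lemma~\ref{lem-upper0} gives $\|u_j\|_{L^\infty(V_{j,R})}\to0$ for any fixed $R$.

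The substance is piece (iii), which I would treat by the same local-estimate-plus-Agmon argument as in the proof of Lemma~\ref{lem-upper0}, now performed along the oblique side. Set $\rho:=h+2$. For a point $p\in U_{r_j,h}$ of piece (iii), let $q$ be its orthogonal projection onto the line $y=x\tan\theta$; then $|p-q|\le h\cos\theta<\rho$. Choose $X$ so large that for $x>X$ one has $y\ge x\tan\theta-h>2\rho$, so the Robin side $\Sigma$ is at distance more than $2\rho$. Choose $R$ (depending only on $h,\theta$) so large that $|p-A_{r_j}|>R$ forces $B_{2\rho}(q)\cap\{x=r_j\}=\emptyset$. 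Then $D_q:=B_{2\rho}(q)\cap U_{r_j}$ is a half-disc of radius $2\rho$ whose flat side lies on the Neumann boundary. All these half-discs are images of one fixed half-disc $D$ under rigid motions, and the Laplacian is invariant under rigid motions.

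Take a radial cutoff $\chi(|\cdot-q|)$ equal to $1$ on $B_\rho(q)$ and vanishing near radius $2\rho$. Its normal derivative vanishes on the flat side, and it vanishes near the curved side. Exactly as in the proof of Lemma~\ref{lem-upper0}, $v=\chi u_j$ solves a Neumann problem on $D_q$ with right-hand side controlled by $u_j$ and $\nabla u_j$. Proposition~\ref{prop-nittka}, applied once on the fixed domain $D$, then yields a constant $C_2$ independent of $j$ and $q$ with
\[
|u_j(p)|^2\le C_2\big(\|u_j\|^2_{L^2(D_q)}+\|\nabla u_j\|^2_{L^2(D_q)}\big).
\]
On $D_q$ one has $y\ge x\tan\theta-h-2\rho$ and $x\le x_q+2\rho$, with $x_q>X-\rho$. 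The Agmon estimate \eqref{eq-agmon1} therefore bounds the right-hand side by
\[
C\,C_2\,\big(1+\gamma(x_q+2\rho)\big)^2 e^{-2\gamma((x_q-2\rho)\tan\theta-h-2\rho)}.
\]
This is a function of $x_q$ alone, independent of $j$, and it tends to $0$ as $x_q\to\infty$.

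To conclude, given $\eps>0$, I first choose $X$ (and then $R$) so that the Agmon bound above is below $\eps^2$ for all $x_q>X-\rho$. Then I use Lemmas~\ref{lem-bound0} and~\ref{lem-upper0} with these fixed $X,R$ to get $|u_j|<\eps$ on pieces (i) and (ii) for all large $j$. The main point requiring care is the uniformity in (iii): it must be checked that every local domain is congruent to a single reference domain, so that one constant from Proposition~\ref{prop-nittka} works for all $j$ and all $q$. This is why the cutoffs are centred on the oblique line rather than at $p$, and why the corner and $\Sigma$ are kept away by the choice of $R$ and $X$. If the half-disc argument near the corner region causes trouble, a larger $R$ can be taken, since Lemma~\ref{lem-upper0} holds for every fixed $R$.
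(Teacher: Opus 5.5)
Your proposal is correct and is essentially the paper's proof. The paper likewise covers the middle of the strip by half-discs $H^{2R}_t$ centred at points $A_t$ of the oblique side, which are translates of one another and carry radial cutoffs compatible with the Neumann condition. It combines the uniform $L^2$--$L^\infty$ estimate of Proposition~\ref{prop-nittka} with the Agmon estimate \eqref{eq-agmon1} to get a $j$-independent bound decaying in $t$, and treats the two ends by Lemmas~\ref{lem-bound0} and~\ref{lem-upper0}.

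One small slip: your intermediate claim $y\ge x\tan\theta-h-2\rho$ on $D_q$ is false in general, since the point of $\overline{D_q}$ farthest from the oblique side has $x\tan\theta-y=2\rho/\cos\theta$. The bound you actually use is still valid, because it follows directly from $y\ge y_q-2\rho=x_q\tan\theta-2\rho$.
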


\begin{proof}
Recall that the points $A_t\equiv(t,t\tan\theta)$ with $t\in[0,r]$ lie on the oblique boundary of $U_r$.
	
Let $h>0$ be fixed. For $t\in\RR$ consider the parallelograms
\[
Q_t:=\big\{(x,y): t-h<x<t+h,\, x\tan\theta-h<y<x\tan\theta\big\},
\]
and for $R>0$ consider the half-disks $H_t^R$ of radius $R$ centered at $A_t$,
\[
H^R_t:=B_R(A_t)\cap\big\{(x,y): y<x\tan\theta\big\}.
\]
As all $Q_t$ are translations of each other, we may pick $R>0$ with $Q_t\subset H^R_t$ for all $t$. See Fig.~\ref{fig-qh}.

\begin{figure}
	
	\centering
	
	\tikzset{every picture/.style={line width=0.75pt}} 
	
	\begin{minipage}{0.5\textwidth}	
		\scalebox{0.7}{\begin{tikzpicture}[x=0.75pt,y=0.75pt,yscale=-1,xscale=1]
				
				\draw [color={rgb, 255:red, 155; green, 155; blue, 155 }  ,draw opacity=1 ] (106.7,240) -- (488.83,240)(130.83,84.33) -- (130.83,273.67) (481.83,235) -- (488.83,240) -- (481.83,245) (125.83,91.33) -- (130.83,84.33) -- (135.83,91.33)  ;
				\draw  [fill={rgb, 255:red, 155; green, 155; blue, 155 }  ,fill opacity=0.42 ] (368.33,133.88) .. controls (371.44,141.1) and (373.17,149.06) .. (373.17,157.41) .. controls (373.17,190.5) and (346.11,217.33) .. (312.74,217.33) .. controls (288.61,217.33) and (267.78,203.31) .. (258.1,183.03) -- cycle ;
				\draw  [color={rgb, 255:red, 0; green, 0; blue, 0 }  ,draw opacity=1 ][fill={rgb, 255:red, 155; green, 155; blue, 155 }  ,fill opacity=0.59 ][line width=0.75]  (440.67,239.67) -- (130,240.33) -- (440.67,101.67) -- cycle ;
				\draw  [color={rgb, 255:red, 0; green, 0; blue, 0 }  ,draw opacity=1 ][fill={rgb, 255:red, 155; green, 155; blue, 155 }  ,fill opacity=0.69 ] (280.33,173.67) -- (280.33,205) -- (340,177.67) -- (340,146.33) -- cycle ;
				\draw  [color={rgb, 255:red, 255; green, 255; blue, 255 }  ,draw opacity=1 ][fill={rgb, 255:red, 0; green, 0; blue, 0 }  ,fill opacity=1 ] (437.03,101.67) .. controls (437.03,99.66) and (438.66,98.03) .. (440.67,98.03) .. controls (442.68,98.03) and (444.3,99.66) .. (444.3,101.67) .. controls (444.3,103.68) and (442.68,105.3) .. (440.67,105.3) .. controls (438.66,105.3) and (437.03,103.68) .. (437.03,101.67) -- cycle ;
				\draw  [color={rgb, 255:red, 255; green, 255; blue, 255 }  ,draw opacity=1 ][fill={rgb, 255:red, 0; green, 0; blue, 0 }  ,fill opacity=1 ] (437.03,239.67) .. controls (437.03,237.66) and (438.66,236.03) .. (440.67,236.03) .. controls (442.68,236.03) and (444.3,237.66) .. (444.3,239.67) .. controls (444.3,241.68) and (442.68,243.3) .. (440.67,243.3) .. controls (438.66,243.3) and (437.03,241.68) .. (437.03,239.67) -- cycle ;
				\draw  [draw opacity=0] (173.01,220.32) .. controls (175.11,225.35) and (176.27,230.87) .. (176.27,236.66) .. controls (176.27,238.06) and (176.2,239.44) .. (176.06,240.81) -- (133.83,236.66) -- cycle ; \draw   (173.01,220.32) .. controls (175.11,225.35) and (176.27,230.87) .. (176.27,236.66) .. controls (176.27,238.06) and (176.2,239.44) .. (176.06,240.81) ;  
				\draw  [color={rgb, 255:red, 255; green, 255; blue, 255 }  ,draw opacity=1 ][fill={rgb, 255:red, 0; green, 0; blue, 0 }  ,fill opacity=1 ] (307.36,159.33) .. controls (307.36,157.32) and (308.99,155.7) .. (311,155.7) .. controls (313.01,155.7) and (314.64,157.32) .. (314.64,159.33) .. controls (314.64,161.34) and (313.01,162.97) .. (311,162.97) .. controls (308.99,162.97) and (307.36,161.34) .. (307.36,159.33) -- cycle ;
				\draw  [dash pattern={on 0.84pt off 2.51pt}]  (311,159.33) -- (311.17,240.33) ;
				\draw  [color={rgb, 255:red, 255; green, 255; blue, 255 }  ,draw opacity=1 ][fill={rgb, 255:red, 0; green, 0; blue, 0 }  ,fill opacity=1 ] (307.36,239.67) .. controls (307.36,237.66) and (308.99,236.03) .. (311,236.03) .. controls (313.01,236.03) and (314.64,237.66) .. (314.64,239.67) .. controls (314.64,241.68) and (313.01,243.3) .. (311,243.3) .. controls (308.99,243.3) and (307.36,241.68) .. (307.36,239.67) -- cycle ;
				\draw  [color={rgb, 255:red, 255; green, 255; blue, 255 }  ,draw opacity=1 ][fill={rgb, 255:red, 0; green, 0; blue, 0 }  ,fill opacity=1 ] (336.36,239.67) .. controls (336.36,237.66) and (337.99,236.03) .. (340,236.03) .. controls (342.01,236.03) and (343.64,237.66) .. (343.64,239.67) .. controls (343.64,241.68) and (342.01,243.3) .. (340,243.3) .. controls (337.99,243.3) and (336.36,241.68) .. (336.36,239.67) -- cycle ;
				\draw  [color={rgb, 255:red, 255; green, 255; blue, 255 }  ,draw opacity=1 ][fill={rgb, 255:red, 0; green, 0; blue, 0 }  ,fill opacity=1 ] (276.36,239.67) .. controls (276.36,237.66) and (277.99,236.03) .. (280,236.03) .. controls (282.01,236.03) and (283.64,237.66) .. (283.64,239.67) .. controls (283.64,241.68) and (282.01,243.3) .. (280,243.3) .. controls (277.99,243.3) and (276.36,241.68) .. (276.36,239.67) -- cycle ;
				\draw  [dash pattern={on 0.84pt off 2.51pt}]  (340,146.33) -- (340,239.67) ;
				\draw  [dash pattern={on 0.84pt off 2.51pt}]  (280.33,174) -- (280.33,236) ;
				\draw  [dash pattern={on 0.84pt off 2.51pt}]  (205.17,239.67) -- (440.33,131.33) ;
				\draw    (409.54,118.33) -- (409.5,143.33) ;
				\draw [shift={(409.5,145.33)}, rotate = 270.08] [color={rgb, 255:red, 0; green, 0; blue, 0 }  ][line width=0.75]    (10.93,-3.29) .. controls (6.95,-1.4) and (3.31,-0.3) .. (0,0) .. controls (3.31,0.3) and (6.95,1.4) .. (10.93,3.29)   ;
				\draw [shift={(409.54,116.33)}, rotate = 90.08] [color={rgb, 255:red, 0; green, 0; blue, 0 }  ][line width=0.75]    (10.93,-3.29) .. controls (6.95,-1.4) and (3.31,-0.3) .. (0,0) .. controls (3.31,0.3) and (6.95,1.4) .. (10.93,3.29)   ;
				
				\draw (465.45,222) node [anchor=north west][inner sep=0.75pt]  [color={rgb, 255:red, 128; green, 128; blue, 128 }  ,opacity=1 ]  {$x$};
				\draw (139.96,79.73) node [anchor=north west][inner sep=0.75pt]  [color={rgb, 255:red, 128; green, 128; blue, 128 }  ,opacity=1 ]  {$y$};
				\draw (411.5,212.63) node [anchor=north west][inner sep=0.75pt]    {$U_{r_j}$};
				\draw (447.66,84.25) node [anchor=north west][inner sep=0.75pt]    {$A_{r_{j}}$};
				\draw (432.29,245) node [anchor=north west][inner sep=0.75pt]    {$r_{j}$};
				\draw (180.46,219.06) node [anchor=north west][inner sep=0.75pt]    {$\theta $};
				\draw (300.17,135.25) node [anchor=north west][inner sep=0.75pt]    {$A_{t}$};
				\draw (305.96,247) node [anchor=north west][inner sep=0.75pt]    {$t$};
				\draw (328.63,245) node [anchor=north west][inner sep=0.75pt]    {$t+h$};
				\draw (258.29,245) node [anchor=north west][inner sep=0.75pt]    {$t-h$};
				\draw (317.67,156.73) node [anchor=north west][inner sep=0.75pt]    {$Q_{t}$};
				\draw (311,190.4) node [anchor=north west][inner sep=0.75pt]    {$H_{t}^{R}$};
				\draw (417.71,116.73) node [anchor=north west][inner sep=0.75pt]    {$h$};

			\end{tikzpicture}
		}\end{minipage}\begin{minipage}{0.45\textwidth}	
			\caption{\small The parallelograms $Q_t$ and the half-disks $H^R_t$.}\label{fig-qh}
			\end{minipage}
	
\end{figure}

We now choose $C,\gamma>0$ and $j_0\in\NN$ such that the Agmon estimate \eqref{eq-agmon1} is valid.
Further pick $T_0>\max\{2R,2R\cot\theta\}$ and assume additionally that $j_0$ is sufficiently large
to have $H^{2R}_t\subset U_{r_j}$ for all $j\ge j_0$ and $t\in[T_0,r_j-T_0]$.

Pick a cutoff function $\chi\in C^\infty(\RR)$
with $\chi(t)=1$ for $t\le R$ and $\supp\chi\subset(-\infty,2R)$, and for $t\in\RR$ consider the functions
\[
\rho_t:\ (x,y)\mapsto \chi\Big( \big|(x,y)-A_t\big|\Big).
\]
By construction, $\rho_t=1$ in $H_t^R$. Furthermore, as $\rho_t$ is radial with respect to $A_t$ and vanishes near $\big|(x,y)-A_t\big|=2R$, one has $\partial_\nu \rho_t=0$ on $\partial H^{2R}_t$. 
It follows that the functions $v_{j,t}:=\rho_t u_j$, with $t\in[T_0,r_j-T_0]$ and $j\ge j_0$, are weak solutions of
\begin{gather*}
	-\Delta v_{j,t} =f_{j,t} \text{ in } H_t^{2R},\quad \partial_\nu v_{j,t}=0 \text{ on }\partial H^{2R}_t,\\
	f_{j,t}:=(-\Delta \rho_t+\lambda_j\rho_t)u_j-2\nabla\rho_t\cdot\nabla u_j.
\end{gather*}
As already noted, the sets $H^{2R}_t$ with various $t$ are translations of each other, and the Laplacian is translation-invariant as well. By the $L^2$-$L^\infty$ estimate (Proposition~\ref{prop-nittka}) one can find $C_1>0$ such that for all $t\in[T_0,r_j-T_0]$ and $j\ge j_0$ one has
\[
\|v_{j,t}\|^2_{L^\infty(H^{2R}_t)}\le C_1\big(\|v_{j,t}\|^2_{L^2(H^{2R}_t)}+\|f_{j,t}\|^2_{L^2(H^{2R}_t)}\big).
\]
As the $L^\infty$-norms of $\rho_t,\nabla\rho_t,\Delta\rho_t$ on $H^{2R}_t$ do not depend on $t$ and $j$ and $\lambda_j$ are uniformly bounded, the summands on the right hand-side can be controlled by $\|u_j\|_{L^2(H^{2R}_t)}^2$ and $\|\nabla u_j\|_{L^2(H^{2R}_t)}^2$ uniformly in $j\ge j_0$. 
We further note that $u_j=v_{j,t}$ in $Q_t$, hence, $\|u_j\|_{L^\infty(Q_t)}\le\|v_{j,t}\|_{L^\infty(H^{2R}_t)}$.
Therefore, with a suitable  $C_2>0$ (which does not depend on $t$) and for any $t\in[T_0,r_j-T_0]$ and $j\ge j_0$ one obtains
\begin{equation*}
	\|u_j\|_{L^\infty(Q_t)}^2\le C_2\big(\|u_j\|_{L^2(H^{2R}_t)}^2+\|\nabla u_j\|_{L^2(H^{2R}_t)}^2\big).
\end{equation*}
For $(x,y)\in H^{2R}_t$ one has $x<t+2R$ and $y>t\tan\theta-2R$, and due to the Agmon estimate \eqref{eq-agmon1} for all $t\in[T_0,r_j-T_0]$
and $j\ge j_0$ we have
\begin{align*}
	\|u_j\|_{L^2(H^{2R}_t)}^2+&\|\nabla u_j\|_{L^2(H^{2R}_t)}^2=\iint_{H^{2R}_t} \big(|u_j|^2+|\nabla u_j|^2\big)\dxy\\
	&\le \Big(\dfrac{1+\gamma (t+2R)}{e^{\gamma (t\tan\theta-2R)}}\Big)^2 \iint_{H^{2R}_t} \Big(\dfrac{e^{\gamma y}}{1+\gamma x}\Big)^2\big(|u_j|^2+|\nabla u_j|^2\big)\dxy\\
	&\le \Big(\dfrac{1+\gamma (t+2R)}{e^{\gamma (t\tan\theta-2R)}}\Big)^2 \iint_{U_{r_j}} \Big(\dfrac{e^{\gamma y}}{1+\gamma x}\Big)^2\big(|u_j|^2+|\nabla u_j|^2\big)\dxy\\
	&\le C \Big(\dfrac{1+\gamma (t+2R)}{e^{\gamma (t\tan\theta-2R)}}\Big)^2=C e^{4\gamma R}\big(1+\gamma (t+2R)\big)^2e^{-2\gamma (\tan\theta)t}
\end{align*}
and, subsequently,
\begin{equation}
	\label{ujest3}
\|u_j\|_{L^\infty(Q_t)}^2\le C_2C e^{4\gamma R}\big(1+\gamma (t+2R)\big)^2e^{-2\gamma (\tan\theta)t},
\quad t\in[T_0,r_j-T_0],\quad j\ge j_0.
\end{equation}

\begin{figure}
	\centering
	
	\begin{minipage}{0.45\textwidth}\centering
	\scalebox{0.6}{\begin{tikzpicture}[x=0.75pt,y=0.75pt,yscale=-1,xscale=1]
			
			\draw [color={rgb, 255:red, 155; green, 155; blue, 155 }  ,draw opacity=1 ]   (460.67,112.58) -- (365,112.99) ;
			\draw [shift={(363,113)}, rotate = 359.75] [color={rgb, 255:red, 155; green, 155; blue, 155 }  ,draw opacity=1 ][line width=0.75]    (10.93,-3.29) .. controls (6.95,-1.4) and (3.31,-0.3) .. (0,0) .. controls (3.31,0.3) and (6.95,1.4) .. (10.93,3.29)   ;
			\draw [color={rgb, 255:red, 155; green, 155; blue, 155 }  ,draw opacity=1 ] (126.7,250.91) -- (508.83,250.91)(150.83,95.24) -- (150.83,284.58) (501.83,245.91) -- (508.83,250.91) -- (501.83,255.91) (145.83,102.24) -- (150.83,95.24) -- (155.83,102.24)  ;
			\draw  [color={rgb, 255:red, 0; green, 0; blue, 0 }  ,draw opacity=1 ][fill={rgb, 255:red, 155; green, 155; blue, 155 }  ,fill opacity=0.59 ][line width=0.75]  (461.5,250.24) -- (150.83,250.91) -- (461.5,112.24) -- cycle ;
			\draw  [color={rgb, 255:red, 255; green, 255; blue, 255 }  ,draw opacity=1 ][fill={rgb, 255:red, 0; green, 0; blue, 0 }  ,fill opacity=1 ] (457.03,112.58) .. controls (457.03,110.57) and (458.66,108.94) .. (460.67,108.94) .. controls (462.68,108.94) and (464.3,110.57) .. (464.3,112.58) .. controls (464.3,114.59) and (462.68,116.21) .. (460.67,116.21) .. controls (458.66,116.21) and (457.03,114.59) .. (457.03,112.58) -- cycle ;
			\draw  [color={rgb, 255:red, 255; green, 255; blue, 255 }  ,draw opacity=1 ][fill={rgb, 255:red, 0; green, 0; blue, 0 }  ,fill opacity=1 ] (457.03,250.58) .. controls (457.03,248.57) and (458.66,246.94) .. (460.67,246.94) .. controls (462.68,246.94) and (464.3,248.57) .. (464.3,250.58) .. controls (464.3,252.59) and (462.68,254.21) .. (460.67,254.21) .. controls (458.66,254.21) and (457.03,252.59) .. (457.03,250.58) -- cycle ;
			\draw  [draw opacity=0] (193.01,231.23) .. controls (195.11,236.26) and (196.27,241.78) .. (196.27,247.57) .. controls (196.27,248.97) and (196.2,250.36) .. (196.06,251.72) -- (153.83,247.57) -- cycle ; \draw   (193.01,231.23) .. controls (195.11,236.26) and (196.27,241.78) .. (196.27,247.57) .. controls (196.27,248.97) and (196.2,250.36) .. (196.06,251.72) ;  
			\draw  [dash pattern={on 0.75pt off 0.75pt}]  (394,174.85) -- (393,270) ;
			\draw [color={rgb, 255:red, 0; green, 0; blue, 0 }  ,draw opacity=1 ] [dash pattern={on 4.5pt off 4.5pt}]  (225.5,250.58) -- (416.25,165.48) -- (460.67,145.67) ;
			\draw    (398,236.65) -- (456.67,236.02) ;
			\draw [shift={(458.67,236)}, rotate = 179.39] [color={rgb, 255:red, 0; green, 0; blue, 0 }  ][line width=0.75]    (10.93,-3.29) .. controls (6.95,-1.4) and (3.31,-0.3) .. (0,0) .. controls (3.31,0.3) and (6.95,1.4) .. (10.93,3.29)   ;
			\draw [shift={(396,236.67)}, rotate = 359.39] [color={rgb, 255:red, 0; green, 0; blue, 0 }  ][line width=0.75]    (10.93,-3.29) .. controls (6.95,-1.4) and (3.31,-0.3) .. (0,0) .. controls (3.31,0.3) and (6.95,1.4) .. (10.93,3.29)   ;
			\draw  [dash pattern={on 0.75pt off 0.75pt}]  (239.33,232.33) -- (239.67,269.33) ;
			\draw    (158.33,262.97) -- (229.67,262) ;
			\draw [shift={(231.67,261.97)}, rotate = 179.22] [color={rgb, 255:red, 0; green, 0; blue, 0 }  ][line width=0.75]    (10.93,-3.29) .. controls (6.95,-1.4) and (3.31,-0.3) .. (0,0) .. controls (3.31,0.3) and (6.95,1.4) .. (10.93,3.29)   ;
			\draw [shift={(156.33,263)}, rotate = 359.22] [color={rgb, 255:red, 0; green, 0; blue, 0 }  ][line width=0.75]    (10.93,-3.29) .. controls (6.95,-1.4) and (3.31,-0.3) .. (0,0) .. controls (3.31,0.3) and (6.95,1.4) .. (10.93,3.29)   ;
			\draw  [color={rgb, 255:red, 0; green, 0; blue, 0 }  ,draw opacity=1 ][fill={rgb, 255:red, 155; green, 155; blue, 155 }  ,fill opacity=0.69 ] (239.33,211) -- (239.33,244.33) -- (394,175.67) -- (394,142.33) -- cycle ;
			\draw    (511.33,128.67) -- (466.67,129.3) ;
			\draw [shift={(464.67,129.33)}, rotate = 359.18] [color={rgb, 255:red, 0; green, 0; blue, 0 }  ][line width=0.75]    (10.93,-3.29) .. controls (6.95,-1.4) and (3.31,-0.3) .. (0,0) .. controls (3.31,0.3) and (6.95,1.4) .. (10.93,3.29)   ;
			\draw  [draw opacity=0][dash pattern={on 0.84pt off 2.51pt}] (153.05,149.68) .. controls (208.38,150.4) and (253,194.52) .. (253,248.83) .. controls (253,266.3) and (248.38,282.71) .. (240.28,296.97) -- (151.67,248.83) -- cycle ; \draw  [dash pattern={on 0.84pt off 2.51pt}] (153.05,149.68) .. controls (208.38,150.4) and (253,194.52) .. (253,248.83) .. controls (253,266.3) and (248.38,282.71) .. (240.28,296.97) ;  
			\draw [color={rgb, 255:red, 155; green, 155; blue, 155 }  ,draw opacity=1 ]   (150.83,250.91) -- (199.02,165.74) ;
			\draw [shift={(200,164)}, rotate = 119.5] [color={rgb, 255:red, 155; green, 155; blue, 155 }  ,draw opacity=1 ][line width=0.75]    (10.93,-3.29) .. controls (6.95,-1.4) and (3.31,-0.3) .. (0,0) .. controls (3.31,0.3) and (6.95,1.4) .. (10.93,3.29)   ;
			\draw  [draw opacity=0][dash pattern={on 0.84pt off 2.51pt}] (504.89,201.83) .. controls (491.53,208.18) and (476.52,211.74) .. (460.67,211.74) .. controls (404.7,211.74) and (359.33,167.35) .. (359.33,112.58) .. controls (359.33,101.56) and (361.17,90.96) .. (364.56,81.05) -- (460.67,112.58) -- cycle ; \draw  [dash pattern={on 0.84pt off 2.51pt}] (504.89,201.83) .. controls (491.53,208.18) and (476.52,211.74) .. (460.67,211.74) .. controls (404.7,211.74) and (359.33,167.35) .. (359.33,112.58) .. controls (359.33,101.56) and (361.17,90.96) .. (364.56,81.05) ;  
			
			\draw (485.45,226.02) node [anchor=north west][inner sep=0.75pt]  [color={rgb, 255:red, 128; green, 128; blue, 128 }  ,opacity=1 ]  {$x$};
			\draw (159.96,90.64) node [anchor=north west][inner sep=0.75pt]  [color={rgb, 255:red, 128; green, 128; blue, 128 }  ,opacity=1 ]  {$y$};
			\draw (340.17,220.54) node [anchor=north west][inner sep=0.75pt]    {$U_{r_{j}}$};
			\draw (467.66,95.16) node [anchor=north west][inner sep=0.75pt]    {$A_{r_{j}}$};
			\draw (452.29,259) node [anchor=north west][inner sep=0.75pt]    {$r_{j}$};
			\draw (200.46,229.97) node [anchor=north west][inner sep=0.75pt]    {$\theta $};
			\draw (408.29,215.16) node [anchor=north west][inner sep=0.75pt]    {$T-h$};
			\draw (174.12,269) node [anchor=north west][inner sep=0.75pt]    {$T-h$};
			\draw (300.92,187.99) node [anchor=north west][inner sep=0.75pt]  [rotate=-337.25]  {$S_{r_{j} ,T,h}$};
			\draw (484.5,131.87) node [anchor=north west][inner sep=0.75pt]    {$U_{r_{j} ,h}$};
			\draw (184.79,190.69) node [anchor=north west][inner sep=0.75pt]  [color={rgb, 255:red, 128; green, 128; blue, 128 }  ,opacity=1 ]  {$R_{0}$};
			\draw (405.12,90.36) node [anchor=north west][inner sep=0.75pt]  [color={rgb, 255:red, 128; green, 128; blue, 128 }  ,opacity=1 ]  {$R_{0}$};

		\end{tikzpicture}
	}\end{minipage}
	\begin{minipage}{0.5\textwidth}
	\caption{\small Covering $U_{r_j,h}$ by $S_{r_j,T,h}$ and two large disks.}\label{fig-sth}
	\end{minipage}
\end{figure}

Let $\eps>0$. By \eqref{ujest3} we can choose $T>T_0$ and $j_0$ sufficiently large
to have 
\[
\|u_j\|_{L^\infty(Q_t)}\le \eps \text{ for all } t\in[T,r_j-T]\subset[T_0,r_j-T_0],\ j\ge j_0.
\]
Hence, if one denotes
\begin{align*}
	S_{r_j,T,h}&:=\bigcup_{t\in [T,r_j-T]} Q_t\equiv U_{r_j,h}\cap\big\{T-h<x<r_j-(T-h)\big\},
\end{align*}
then
\begin{equation}
	\label{ujs}
\|u_j\|_{L^\infty(S_{r_j,T,h})}\le \eps,\quad j\ge j_0.
\end{equation}
 In addition,
\[
U_{r_j,h}\setminus S_{r_j,T,h}=\big(U_{r_j,h}\cap\{x\le T-h\}\big)\,\cup\,\Big(U_{r_j,h}\cap\big\{x\ge r_j-(T-h)\big\}\Big),
\]
and for a large $R_0>0$ and all $j\ge j_0$ we have the inclusions
\begin{align*}
U_{r_j,h}\cap\{x\le T-h\}&\subset U_{r_j}\cap B_{R_0}(0),\\
U_{r_j,h}\cap\big\{x\ge r_j-(T-h)\big\}&\subset U_{r_j}\cap B_{R_0}(A_{r_j}),
\end{align*}
which gives
\begin{equation}
	\label{incl11}
U_{r_j,h}\subset \big(U_{r_j}\cap B_{R_0}(0)\big)\cup S_{r_j,T,h}\cup \big(U_{r_j}\cap B_{R_0}(A_{r_j})\big) \text{ for all }j\ge j_0,
\end{equation}
see Fig.~\ref{fig-sth}.
By Lemma \ref{lem-bound0} and Lemma~\ref{lem-upper0} we can find $j_1\ge j_0$ to have additionally
\[
\|u_j\|_{L^\infty(U_{r_j}\cap B_{R_0}(0))}\le\eps,\quad \|u_j\|_{L^\infty(U_{r_j}\cap B_{R_0}(A_{r_j}))}\le\eps \quad\text{ for all }j\ge j_1,
\]
and in view of the bound~\eqref{ujs} and the inclusion \eqref{incl11} we arrive at $\|u_j\|_{L^\infty(U_{r_j,h})}\le\eps$ for all $j\ge j_1$.
\end{proof}

\subsection{Reduction to a problem on an infinite half-strip}

The aim of this subsection is to reduce the initial Robin-Neumann problem on the triangles $U_{r_j}$
to a Robin-Neumann problem on the infinite half-strips
\[
\Pi_j:=(0,r_j)\times(0,\infty).
\]
This will allow one to use the separation of variables.

For the subsequent constructions let  us pick any $h>0$
and a function $\chi\in C^\infty(\RR)$ with $0\le\chi\le 1$ and such that
\[
\chi(t)=1 \text{ for }t\le -h,\quad \supp \chi\subset(-\infty, 0).
\]
For all sufficiently large $j$ the smooth function
\[
\rho_j:\ (x,y)\mapsto \chi(y-x\tan\theta)\chi\Big(\dfrac{h\cot\theta-x}{\cot \theta}\Big)\chi(y-r_j\tan\theta+h)
\]
satisfies $\partial_\nu \rho_j=0$ on $\partial U_{r_j}$, and for $(x,y)\in U_{r_j}$ one has
\begin{align*}
\rho_j(x,y)&=1 \text{ for }y<x\tan\theta-2h,\\
\rho_j(x,y)&=0 \text{ for $(x,y)$ in a neighborhood of }\partial U_{r_j}\cap\{y=x\tan\theta\}.
\end{align*}

Consider the functions
\begin{equation}
	\label{eq-vj}
	v_j:=\rho_j u_j \text{ extended by zero to }\Pi_j.
\end{equation}
By construction we have
\[
v_j=u_j \text{ in }U_{r_j}\setminus U_{r_j,2h}.
\]
Moreover, the functions $v_j$
are weak solutions of
\begin{gather}
\label{eq-vj1}
	-\Delta v_j=\lambda_j v_j+f_j\text{ in }\Pi_j, \quad f_j:=-(\Delta \rho_j)u_j-2\nabla\rho_j\cdot\nabla u_j,\\
\label{eq-vj2}	
	\underbrace{-\partial_y v_j}_{=\partial_\nu v_j}=v_j \text{ on }\{y=0\},\quad v_j=0 \text{ on }\{x=0\},\quad \underbrace{\partial_x v_j}_{=\partial_\nu v_j}=0 \text{ on }\{x=r_j\}.
\end{gather}	
(with $f_j$ extended by zero to $\Pi_j$ as well), meaning that
\begin{equation}
	\label{vj-weak}
\begin{aligned}
	v_j&\in \big\{v\in H^1(\Pi_j):\ v(0,\cdot)=0\big\}=:\Tilde H^1(\Pi_j),\\
	\iint_{\Pi_j} \nabla v_j\cdot \nabla\varphi\,\dxy&- \int_0^{r_j} v_j(x,0)\varphi(x,0)\dd x=\lambda_j\iint_{\Pi_j} v_j\varphi\,\dxy\\
	&\qquad +\iint_{\Pi_j} f_j\varphi\,\dxy \text{ for all } \varphi\in \Tilde H^1(\Pi_j).
\end{aligned}	
\end{equation}
The symbols $\Pi_j$, $\rho_j$, $v_j$ and $f_j$ will keep their meaning through the rest of Section~\ref{sec4}.

In order to explore the tensor structure of the problem let us introduce two important one-dimensional operators.

\subsubsection{Dirichlet-Neumann-Laplacian on $(0,r_j)$}\label{ssec-dn}
Denote by $D_j$ the self-adjoint operator in $L^2(0,r_j)$
acting as $g\mapsto-g''$ on $g\in H^2(0,r_j)$ with the Dirichlet-Neumann boundary condition $g(0)=g'(r_j)=0$.
Its bilinear form $d_j$ is
\begin{equation}
	\label{eq-star2}
\begin{aligned}d_j(g,g)&=\big\|\sqrt{D_j}g\big\|^2_{L^2(0,r_j)}=\int_0^{r_j} g'(x)^2\dd x,\\
\dom d_j=\dom\sqrt{D_j}&=\big\{g\in H^1(0,r_j):\ g(0)=0\big\}=:\Tilde H^1(0,r_j),
\end{aligned}
\end{equation}
and the spectrum is
\[
\spec D_j=\Big\{\dfrac{\pi^2}{r_j^2}\Big(n-\dfrac{1}{2}\Big)^2,\quad n\in\NN\Big\},
\]
while each element of $\spec D_j$ is a simple eigenvalue of $D_j$.

Further note that for any  $\lambda\in\CC\setminus\spec D_j$ the resolvent $(D_j-\lambda)^{-1}$
is an integral operator,
\[
(D_j-\lambda)^{-1} g(x)=\int_0^{r_j} G_{j,\lambda}(x,t)g(t)\,\dd t,\quad x\in (0,r_j),
\]
with $G_{j,\lambda}$ being the associated Green function,
\begin{equation}
	\label{eq-glj}
G_{j,\lambda}(x,t)=
\dfrac{ \sin\big(\sqrt{\lambda}\min\{x,t\}\big)\cos\Big(\sqrt{\lambda}\big(r_j-\max\{x,t\}\big)\Big)}{\sqrt{\lambda}\cos(\sqrt{\lambda}r_j)},
\end{equation}
 where the square root branch is chosen arbitrarily and the expression is extended by continuity for $\lambda=0$.

\subsubsection{Robin Laplacian on the half-line}\label{ssec-robin}
Consider the following self-adjoint operator $Q$ in $L^2(0,\infty)$:
\[
Q:\ f\mapsto -f'',\quad \dom Q=\big\{f\in H^2(0,\infty):\ -f'(0)=f(0)\big\}.
\]
One easily checks that
$\spec Q=\{-1\}\cup[0,\infty)$,
while $-1$ is a simple eigenvalue with normalized eigenfunction
\[
\psi:\  y\mapsto \sqrt{2}e^{-y}.
\]
We denote by $p$ the orthogonal projector on $\RR\psi$ in $L^2(0,\infty)$
and set
\[
p_\perp:=I-p, \quad L^2_\perp(0,\infty):= p_\perp \big( L^2(0,\infty)\big).
\]
Then the operator 
\[
Q_\perp:= Q \text{ viewed as an operator in }L^2_\perp(0,\infty),\quad
\dom Q_\perp:=\dom Q\cap L^2_\perp(0,\infty),
\]
is non-negative due to the spectral theorem.

Further recall that the bilinear form $q$ for $Q$ is given by
\[
q(f,f)=\int_0^\infty f'(y)^2\,\dd y-f(0)^2,\quad \dom q= \dom\sqrt{|Q|}=H^1(0,\infty).
\]
So if we denote 
\[
H^1_\perp(0,\infty):=H^1(0,\infty)\cap L^2_\perp(0,\infty),
\]
then $\dom \sqrt{Q_\perp}=H^1_\perp(0,\infty)$ and
\begin{gather*}
	\big\|\sqrt{Q_\perp}f\big\|^2_{L^2_\perp(0,\infty)}=\int_0^\infty f'(y)^2\,\dd y-f(0)^2\ge0,\quad f\in 	\dom \sqrt{Q_\perp}.
\end{gather*}

For each $f\in H^1(0,\infty)$ we have, using $|2ab|\le 2t^{-1}a^2+2tb^2$ with $t=4$,
\[
f(0)^2=\Big| \int_0^\infty 2f(y)f'(y)\,\dd y\Big|\le \dfrac{1}{2}\int_0^\infty f'(y)^2\dd y+8\int_0^\infty f(y)^2\dd y,
\]
hence, 
\begin{equation}
	\label{q-ineq1}
	2\big\|\sqrt{Q_\perp} f\big\|^2_{L^2(0,\infty)}+16\|f\|^2_{L^2(0,\infty)}
	\ge \int_0^\infty f'(y)^2\dd y,
	\quad f\in \dom \sqrt{Q_\perp}.
\end{equation}

\subsubsection{Tensor representation of the problem for $v_j$}\label{item443}
We will use the identification
\[
L^2(\Pi_j)\simeq L^2(0,r_j)\otimes L^2(0,\infty).
\]
Noting that the space $\Tilde H^1(\Pi_j)$ in \eqref{vj-weak} is the form domain of $D_j\otimes I+I\otimes Q$, one rewrites
the weak formulation \eqref{vj-weak} as
\[
(d_j\otimes I + I\otimes q)(v_j,\varphi)=\langle \lambda_j v_j +f_j,\varphi\rangle_{L^2(\Pi_j)},\quad \varphi\in \dom (d_j\otimes I + I\otimes q),
\]
which yields
\begin{equation}
	\label{vj-tensor1}
v_j\in\dom H_j,\quad (H_j-\lambda_j)v_j =f_j,\quad H_j:=D_j\otimes I+I\otimes Q.
\end{equation}

Note that
\[
\inf \spec H_j=\inf\spec D_j+\inf\spec Q=-1+\dfrac{\pi^2}{4r_j^2},
\]
and the assumption on $\lambda_j$ is
\[
\lambda_j\le -1+\dfrac{b}{r_j^2},\quad b<\dfrac{\pi^2}{4}.
\]
Hence, in particular, $\lambda_j<\inf \spec H_j$.

In order to exploit the tensor structure \eqref{vj-tensor1} we use the orthogonal projectors
\begin{gather*}
P_j:=I\otimes p,\quad P_j^\perp :=I\otimes p_\perp,\\
P_j f:\ (x,y)\mapsto  \big\langle f(x,\cdot),\psi\big\rangle_{L^2(0,\infty)}\psi(y), \quad f\in L^2(\Pi_j).
\end{gather*}
and note that considering the problem \eqref{vj-tensor1} is equivalent to considering two separate problems
\begin{align}
	P_j (H_j-\lambda_j ) v_j&=P_j f_j, \label{prob-pp}\\
	P^\perp_j (H_j-\lambda_j) v_j&=P^\perp_j f_j. \label{prob-perp}
\end{align}

\subsection{Analysis of the right-hand side}
In order to analyze the equations~\eqref{prob-pp} and \eqref{prob-perp}, let us first establish some properties of the function $f_j$ appearing on their right-hand sides.

\begin{lemma}[Agmon estimate in $x$]\label{lem-fj}
	One can find some $\beta>0$, $C_\beta>0$ and $j_0\in\NN$ such that
	\begin{equation}
		\label{eq-agmon2}
		\iint_{\Pi_j} e^{2\beta x}\big|f_j(x,y)\big|^2\dxy\le C_\beta \text{ for all }j\ge j_0.
	\end{equation}		
	If $F=0$, then
	\begin{equation}
		\label{fj-conv0}
		\|f_j\|_{L^2(\Pi_j)}\xrightarrow{j\to\infty}0.
	\end{equation}
\end{lemma}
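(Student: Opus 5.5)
The plan is to exploit the localization of $f_j$. By construction, $f_j=-(\Delta\rho_j)u_j-2\nabla\rho_j\cdot\nabla u_j$ is supported where $\nabla\rho_j\ne0$. This set consists of three pieces.
\begin{itemize}
\item The strip $S_j^{\mathrm{obl}}:=\{x\tan\theta-2h\le y\le x\tan\theta\}\cap U_{r_j}$ along the oblique side, coming from the factor $\chi(y-x\tan\theta)$.
\item A bounded set near the origin, $\{x\le 2h\cot\theta\}$, coming from the second factor.
\item A region near the top, $\{y\ge r_j\tan\theta-2h\}$, coming from the third factor; inside $U_{r_j}$ this lies in a ball of fixed radius around $A_{r_j}$.
\end{itemize}
The derivatives of $\rho_j$ up to order two are bounded uniformly in $j$, since $\chi$ is fixed. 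Hence $|f_j|\le C(|u_j|+|\nabla u_j|)$ pointwise, with $f_j=0$ outside the set $Z_j:=U_{r_j,2h}\cup(U\cap B_{R_1}(0))$ for a fixed $R_1$; note that the top region is already contained in $U_{r_j,2h}$ up to a bounded piece.

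For \eqref{eq-agmon2} I use the Agmon estimate in $y$, \eqref{eq-agmon1}. On $U_{r_j,2h}$ one has $y\ge x\tan\theta-2h$, so
\[
e^{2\beta x}\le e^{2\beta x}\Big(\frac{1+\gamma x}{e^{\gamma y}}\Big)^2\Big(\frac{e^{\gamma y}}{1+\gamma x}\Big)^2\le e^{4\gamma h}\,(1+\gamma x)^2e^{2(\beta-\gamma\tan\theta)x}\Big(\frac{e^{\gamma y}}{1+\gamma x}\Big)^2 .
\]
Choosing $\beta:=\tfrac12\gamma\tan\theta$ makes $(1+\gamma x)^2e^{-\gamma x\tan\theta}$ bounded. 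Therefore
\[
\iint e^{2\beta x}|f_j|^2\le C\iint_{U_{r_j}}\Big(\frac{e^{\gamma y}}{1+\gamma x}\Big)^2\big(|u_j|^2+|\nabla u_j|^2\big)\le C',
\]
where the bounded piece near the origin is handled the same way, or simply by Lemma~\ref{corol-grad} together with $\|u_j\|_\infty=1$. This gives \eqref{eq-agmon2} for $j\ge j_0$, with $j_0$ taken from \eqref{eq-agmon1}.

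For \eqref{fj-conv0} (assuming $F=0$) I split the integral of $|f_j|^2$ into $\{x\le T\}$ and $\{x>T\}$. On $\{x>T\}$, \eqref{eq-agmon2} gives
\[
\iint_{x>T}|f_j|^2\le C_\beta e^{-2\beta T},
\]
which is small for large $T$ uniformly in $j$. On $\{x\le T\}$ the support of $f_j$ lies in a fixed bounded set $V\subset U$. Lemma~\ref{lem-bd} then gives $\|u_j\|_{L^2(V)}+\|\nabla u_j\|_{L^2(V)}\to0$, hence $\|f_j\|_{L^2(V)}\to0$. Letting first $j\to\infty$ and then $T\to\infty$ yields the claim.

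The main point needing care is the first step: verifying that $\operatorname{supp}\nabla\rho_j\cap\{x>T\}$ really lies in the region $y\ge x\tan\theta-2h$, where the $y$-weight dominates the $x$-weight. This includes the top cutoff, where $y\ge r_j\tan\theta-2h\ge x\tan\theta-2h$ holds automatically. Otherwise the argument is routine bookkeeping.
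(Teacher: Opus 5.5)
Your proposal is correct and follows the paper's proof essentially step for step. It uses the pointwise bound $|f_j|^2\le C_0\big(|u_j|^2+|\nabla u_j|^2\big)$, confines $f_j$ to the oblique strip $\{x\tan\theta-2h<y<x\tan\theta\}$, converts the $y$-weighted Agmon estimate into an $x$-weight, and splits at a large $x$-value, using Lemma~\ref{lem-bd} on the bounded part. One small correction: all of $\supp\nabla\rho_j$, including the pieces near the origin and near $A_{r_j}$, lies entirely (not just up to a bounded piece) in $U_{r_j,2h}$, so your extra ball $B_{R_1}(0)$ is harmless but unnecessary.
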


\begin{proof}
	First note that $(0,R)\times(0,\infty)\subset\Pi_j$ for large $j$, so $f_j$ is defined on $(0,R)\times(0,\infty)$.	From uniform bounds for $\|\Delta \rho_j\|_\infty$
	and $\|\nabla \rho_j\|_\infty$ in \eqref{eq-vj1} one sees that the functions $f_j$ satisfy pointwise
	\begin{equation}
		\label{temp00}
		|f_j|^2\le C_0\big(|u_j|^2+|\nabla u_j|^2\big)
	\end{equation}
	with $C_0>0$ independent of $j$. Further note that $f_j$ is supported in the oblique strip
	\[
	S_h:=\big\{(x,y): x\tan\theta-2h<y<x\tan\theta \big\}.
	\]
	
	By choosing $C>0$, $\gamma>0$ and $j_0\ge0$ as in the Agmon estimate \eqref{eq-agmon1} we obtain for all $j\ge j_0$:
	\begin{align*}
		C&\ge \iint_{U_{r_j}} \Big(\dfrac{e^{\gamma y}}{1+\gamma x}\Big)^2\Big(\big|u_j(x,y)\big|^2+\big|\nabla u_j(x,y)\big|^2	\Big)\,\dxy\\
		&\ge \dfrac{1}{C_0}\iint_{U_{r_j}\cap S_h} \Big(\dfrac{e^{\gamma y}}{1+\gamma x}\Big)^2 \big| f_j(x,y)\big|^2\,\dxy\\
		&\ge \dfrac{1}{C_0}\iint_{U_{r_j}\cap S_h} \Big(\dfrac{e^{\gamma (x\tan\theta -2h)}}{1+\gamma x}\Big)^2 \big| f_j(x,y)\big|^2\,\dxy\\
		&=\dfrac{e^{-4\gamma h}}{C_0} \iint_{\Pi_j} \Big(\dfrac{e^{\gamma x\tan\theta}}{1+\gamma x}\Big)^2 \big| f_j(x,y)\big|^2\,\dxy.
	\end{align*}
	We can further choose a small $\beta>0$ and a large $c>0$ to have
	\[
	\dfrac{e^{\gamma x\tan\theta}}{1+\gamma x}\ge \dfrac{1}{c}e^{\beta x} \text{ for all }x>0
	\]
	which results in
	\[
	C_\beta:=CC_0c^2e^{4\gamma h}\ge \iint_{\Pi_j} e^{2\beta x} \big| f_j(x,y)\big|^2\,\dxy,
	\]
	showing~\eqref{eq-agmon2}.

	Assume now that $F=0$. For any $R>0$ the set
	\[
	V:=S_h\cap \big( (0,R)\times(0,\infty)\big)
	\]
	is bounded and contained in the sector $U$. We have then
	\begin{equation}
		\label{ff-bound}
		\|f_j\|^2_{L^2 ((0,R)\times(0,\infty))}=\|f_j\|^2_{L^2 (V)}\stackrel{\eqref{temp00}}{\le}C_0\big(\|u_j\|^2_{L^2(V)}+\|\nabla u_j\|^2_{L^2(V)}\big)\xrightarrow[\text{Lem.~\ref{lem-bd}}]{j\to\infty}0.
	\end{equation}
	Furthermore, for $j\ge j_0$ we obtain
	\begin{equation}
		\label{fj00}
		\begin{aligned}
			\|f_j\|^2_{L^2(\Pi_j)}&=\|f_j\|^2_{L^2 ((0,R)\times(0,\infty))}+\|f_j\|^2_{L^2 ((R,r_j)\times(0,\infty))}\\
			&\le \|f_j\|^2_{L^2 ((0,R)\times(0,\infty))}+e^{-2\beta R}\iint_{(R,r_j)\times(0,\infty)}e^{2\beta x}\big|f_j(x,y)\big|^2\dxy\\
			&\le \|f_j\|^2_{L^2 ((0,R)\times(0,\infty))}+e^{-2\beta R}\iint_{\Pi_j}e^{2\beta x}\big|f_j(x,y)\big|^2\dxy\\
			&
			\stackrel{\eqref{eq-agmon2}}{\le}\|f_j\|^2_{L^2 ((0,R)\times(0,\infty))}
			+C_\beta e^{-2\beta R}.
		\end{aligned}
	\end{equation}
	Let $\eps>0$. We first choose $R>0$ sufficiently large to have $C_\beta e^{-2\beta R}<\eps$. We then use \eqref{ff-bound} and increase $j_0$ to obtain $\|f_j\|^2_{L^2 ((0,R)\times(0,\infty))}<\eps$ for all $j\ge j_0$.
	One arrives at $\|f_j\|^2_{L^2(\Pi_j)}<2\eps$ for all $j\ge j_0$ by \eqref{fj00}, and \eqref{fj-conv0} follows due to the arbitrariness of $\eps$.
\end{proof}

\subsection{Analysis of the projection on the ground state}\label{item444}

Let us consider first the problem \eqref{prob-pp} in $\ran P_j$. We have
\begin{equation*}
\begin{aligned}
	P_j v_j&=g_j\otimes\psi, &P_j f_j&=h_j\otimes\psi,\\
g_j&:\  x\mapsto \big\langle v_j(x,\cdot),\psi\big\rangle_{L^2(0,\infty)}, &
h_j&:\  x\mapsto \big\langle f_j(x,\cdot),\psi\big\rangle_{L^2(0,\infty)}.
\end{aligned}	
\end{equation*}

As $p$ is the orthogonal projector on $\ker(Q+1)$, we have
\begin{align*}
	P_j (H_j-\lambda_j)v_j&= (H_j-\lambda_j) P_j v_j\\
	&=\big((D_j-(\lambda_j+1)I\big)\otimes I\big)P_j v_j=\big[\big(D_j -(\lambda_j+1)\big)g_j\big]\otimes\psi,
\end{align*}
and the problem \eqref{prob-pp} together with the assumption \eqref{lest2}
takes the simple form
\begin{equation}
	\label{eq-tjmj}
	(D_j-\mu_j)g_j=h_j,\quad \mu_j:=\lambda_j+1\le \dfrac{b}{r_j^2},\quad b<\dfrac{\pi^2}{4}.
\end{equation}

\begin{lemma}\label{lem-gj}
	Assume that  $\lambda_j$ satisfy \eqref{lest2} and $F=0$, then
	$\|g_j\|_{L^\infty(0,r_j)}\xrightarrow{j\to\infty}0$.
\end{lemma}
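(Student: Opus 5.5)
We will solve the equation \eqref{eq-tjmj} explicitly with the Green function \eqref{eq-glj}:
\[
g_j(x)=\int_0^{r_j} G_{j,\mu_j}(x,t)h_j(t)\,\dd t,\quad x\in(0,r_j).
\]
This is legitimate because $\mu_j\le b/r_j^2<\pi^2/(4r_j^2)=\inf\spec D_j$, so $\mu_j\notin\spec D_j$. The argument then has two steps: a uniform pointwise bound on the kernel, and smallness of $h_j$ in a weighted $L^1$ norm.

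\emph{Bound on the kernel.} Rescale by $x=r_j s$, $t=r_j\tau$ and set $\nu_j:=r_j^2\mu_j\in[-a,b]$ (see \eqref{eq-lambdaj}). A direct substitution gives
\[
G_{j,\mu_j}(x,t)=r_j\,\dfrac{\sin\big(\sqrt{\nu_j}\min\{s,\tau\}\big)\cos\big(\sqrt{\nu_j}(1-\max\{s,\tau\})\big)}{\sqrt{\nu_j}\cos\sqrt{\nu_j}}.
\]
As $\nu$ ranges over the compact set $[-a,b]$, the denominator $\cos\sqrt{\nu}$ (interpreted as $\cosh\sqrt{|\nu|}$ for $\nu<0$) stays bounded away from zero; this is exactly where $b<\pi^2/4$ enters. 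The numerator satisfies $|\sin(\sqrt{\nu}\sigma)/\sqrt{\nu}|\le C\sigma$ uniformly for $\sigma\in[0,1]$. Together these give
\[
|G_{j,\mu_j}(x,t)|\le C\,r_j\min\{s,\tau\}\le C\min\{x,t\}\le C\,t,
\]
with $C$ independent of $j$. Establishing this uniform bound is the main obstacle, since it is the only place where the spectral gap assumption \eqref{lest2} is used. Without it, the prefactor $r_j$ would blow up.

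\emph{Smallness of $h_j$.} By Cauchy--Schwarz in $y$ and $\|\psi\|=1$, we have $|h_j(x)|\le\|f_j(x,\cdot)\|_{L^2(0,\infty)}$. Hence
\[
\|g_j\|_{L^\infty}\le C\int_0^{r_j}t\,|h_j(t)|\,\dd t\le C\int_0^{r_j} t\,\|f_j(t,\cdot)\|\,\dd t.
\]
Fix $R>0$ and split the integral at $t=R$.

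On $(R,r_j)$, Cauchy--Schwarz together with the Agmon estimate \eqref{eq-agmon2} gives
\[
\int_R^{r_j} t\,e^{-\beta t}\,e^{\beta t}\|f_j(t,\cdot)\|\,\dd t\le\Big(\int_R^\infty t^2e^{-2\beta t}\,\dd t\Big)^{1/2}C_\beta^{1/2}.
\]
This can be made smaller than $\eps$ by choosing $R$ large, uniformly in $j$.

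On $(0,R)$, the integral is bounded by $R^{3/2}\|f_j\|_{L^2(\Pi_j)}$, which tends to $0$ by \eqref{fj-conv0} since $F=0$.

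Since $\eps>0$ was arbitrary, we conclude $\limsup_{j\to\infty}\|g_j\|_{L^\infty(0,r_j)}=0$, which is the claim.
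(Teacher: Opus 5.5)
Your proof is correct and follows the paper's argument. It uses the same Green-function bound $|G_{j,\mu_j}(x,t)|\le C\min\{x,t\}$, obtained from $r_j^2\mu_j\in[-a,b]$ with $b<\pi^2/4$, then the estimate $\|g_j\|_{L^\infty}\le C\int_0^{r_j}t|h_j(t)|\,\dd t$ controlled via the Agmon estimate \eqref{eq-agmon2} and \eqref{fj-conv0}. The only cosmetic difference is that you split the integral at $t=R$ with an $\eps$-argument, whereas the paper applies Cauchy--Schwarz twice to get the explicit bound $\big(\int_0^{r_j}t|h_j|\,\dd t\big)^2\le B\sqrt{C_\beta}\,\|f_j\|_{L^2(\Pi_j)}$.
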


\begin{proof}
As discussed in \S~\ref{ssec-dn}, the solution $g_j$ of \eqref{eq-tjmj} is given by
\begin{equation}
	\label{eq-green1}
g_j(x)=\int_{0}^{r_j} G_{j,\mu_j}(x,t)h_j(t)\,\dd t,
\end{equation}
with the Green function $G_{j,\mu_j}$ from~\eqref{eq-glj}, which we rewrite 
as
\begin{gather*}
	G_{j,\mu_j}(x,t)=\dfrac{1}{\cos(\sqrt{\mu_j}r_j)}\dfrac{\sin (z_1)}{z_1}\cos (z_2)\min\{x,t\},\\
	z_1:=\sqrt{\mu_j}\min\{x,t\},\quad
	z_2:=\sqrt{\mu_j}\big(r_j-\max\{x,t\}\big).
\end{gather*}
Recall that we have the uniform bound
\[
-a\le r_j^2\mu_j\le b<\dfrac{\pi^2}{4},
\]
see \eqref{eq-lambdaj}. On the one hand, this gives the uniform bounds
\[
|z_j|\le \max\{\sqrt{|a|},\sqrt{b}\},\quad j\in\{1,2\}.
\]
At the same time, the value $\sqrt{\mu_j}r_j$ remains at a fixed positive distance from the zeros of $\lambda\mapsto \cos \lambda$, and one has
\[
\sup_j \dfrac{1}{\cos(\sqrt{\mu_j}r_j)}<\infty.
\]
From these observations we conclude that for some $C>0$ and all $j$ we have
\[
\big|G_{j,\mu_j}(x,t)\big|\le C\min\{x,t\}\le Ct,\quad x,t\in(0,r_j),
\]
and the substitution into \eqref{eq-green1} yields
\begin{equation}
	\label{eq-green2}	
	\|g_j\|_{L^\infty(0,r_j)}\le C \int_0^{r_j} t \big|h_j(t)\big|\,\dd t.
\end{equation}

We have
\begin{equation}
	\label{hjnorm}
\begin{aligned}
\big|h_j(t)\big|^2&=\Big|\big\langle f_j(t,\cdot),\psi\big\rangle_{L^2(0,\infty)}\Big|^2\\
&\le \big\|f_j(t,\cdot)\big\|^2_{L^2(0,\infty)}\underbrace{\|\psi\|^2_{L^2(0,\infty)}}_{=1}=\int_0^\infty \big|f_j(t,y)\big|^2\dd y.
\end{aligned}
\end{equation}
Choose $\beta>0$, $C_\beta>0$ and $j_0\in\NN$ as in the Agmon estimate \eqref{eq-agmon2} for $f_j$. For $j\ge j_0$
we have
\begin{equation}
	\label{hj00}
\begin{aligned}
	\bigg(\int_0^{r_j} t \big|h_j(t)\big|\,\dd t\bigg)^2&=\bigg(\int_0^{r_j} xe^{-\frac{\beta x}{2}} e^{\frac{\beta x}{2}}\big|h_j(x)\big|\,\dd x\bigg)^2\\
	\text{(use Cauchy-Schwarz) }&\le \int_0^{r_j} x^2e^{-\beta x}\dd x \cdot \int_0^{r_j} e^{\beta x}\big|h_j(x)\big|^2\dd x.
\end{aligned}
\end{equation}
We estimate
\begin{align*}
	\int_0^{r_j} x^2e^{-\beta x}\dd x&\le \int_0^{\infty} x^2e^{-\beta x}\dd x=:B<\infty,\\
	\bigg(\int_0^{r_j} e^{\beta x}\big|h_j(x)\big|^2\dd x\bigg)^2&=\bigg(\int_0^{r_j} e^{\beta x}\big|h_j(x)\big| \cdot \big|h_j(x)\big|\,\dd x\bigg)^2\\
	\text{(use Cauchy-Schwarz) }&\le \int_0^{r_j} e^{2\beta x}\big|h_j(x)\big|^2\dd x \cdot \int_0^{r_j} \big|h_j(x)\big|^2\,\dd x\\
	&\stackrel{\eqref{hjnorm}}{\le} \iint_{\Pi_j}e^{2\beta x}\big|f_j(x,y)\big|^2\dxy \cdot\iint_{\Pi_j}\big|f_j(x,y)\big|^2\dxy\\
	&\le C_\beta \|f_j\|^2_{L^2(\Pi_j)}.
\end{align*}
The substitution into \eqref{hj00} and the assumption $F=0$ yield
\[
\bigg(\int_0^{r_j} t \big|h_j(t)\big|\,\dd t\bigg)^2\le B\sqrt{C_\beta} \|f_j\|_{L^2(\Pi_j)}\xrightarrow[\text{by \eqref{fj-conv0}}]{j\to\infty}0,
\]
and the sought claim follows from \eqref{eq-green2}.
\end{proof}

\subsection{Analysis in the orthogonal complement of the ground state}\label{item445}

Here we study the problem \eqref{prob-perp} in $\ran P_j^\perp$. Consider the functions
\[
w_j:= P_j^\perp v_j\in L^2_\perp(\Pi_j)\equiv P_j^\perp L^2(\Pi_j)\equiv L^2(0,r_j)\otimes L_\perp^2(0,\infty).
\]
In view of the definition of $P_j$ and $P_j^\perp$ we have
\begin{equation}
	\label{eq-wj}
w_j\in \dom H_j^\perp,\quad (H_j^\perp-\lambda_j)w_j=P_j^\perp f_j=:f_j^\perp,\quad
H_j^\perp:=D_j\otimes I+I\otimes Q_\perp,
\end{equation}
with the operators $D_j\ge 0$ and $Q_\perp\ge 0$ introduced in \S\S~\ref{ssec-dn}--\ref{ssec-robin}.
We are going to show first in Lemma~\ref{lemh21} that the $H^2$-norm of $w_j$ is controlled by the $L^2$-norm of $f_j^\perp$. In view of the embedding $H^2(\Pi_j)\hookrightarrow L^\infty(\Pi_j)$ with a uniform constant (Lemma~\ref{lemh22}), the assumption $F=0$ results in the uniform convergence of $w_j$ to $0$ (Corollary~\ref{corol-wj}).

Recall that the spaces $\Tilde H^1$ were introduced in \eqref{vj-weak} and \eqref{eq-star2}.

\begin{lemma}\label{lemh21}
One has $\dom H_j^\perp\subset H^2(\Pi_j)$. Furthermore, there exists $C>0$
such that for any $j$, any $\psi_j\in L^2_\perp(\Pi_j)$
and $\varphi_j:=(H_j^\perp-\lambda_j)^{-1}\psi_j$ one has
\[
\|\varphi_j\|_{H^2(\Pi_j)}\le C\|\psi_j\|_{L^2_\perp(\Pi_j)}.
\]
	
\end{lemma}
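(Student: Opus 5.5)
The plan is to prove a uniform $H^2$ bound for $\varphi_j$ via the tensor structure: control $\partial_x^2\varphi_j$, $\partial_y^2\varphi_j$ and the mixed derivative separately, using spectral calculus for the commuting operators $D_j\otimes I$ and $I\otimes Q_\perp$.

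\emph{First step: uniform resolvent bound.} Since $\lambda_j\to-1$ and we assume $-\frac32<\lambda_j<-\frac12$, and since $H_j^\perp\ge0$, the spectral theorem gives
\[
\big\|(H_j^\perp-\lambda_j)^{-1}\big\|\le 2,\qquad \big\|H_j^\perp(H_j^\perp-\lambda_j)^{-1}\big\|\le 1 .
\]
Hence $\|\varphi_j\|\le 2\|\psi_j\|$ and $\|H_j^\perp\varphi_j\|\le\|\psi_j\|$. Both $A:=D_j\otimes I$ and $B:=I\otimes Q_\perp$ are nonnegative and strongly commuting, so their joint spectral measure gives
\[
\|A\varphi\|^2+\|B\varphi\|^2\le\|(A+B)\varphi\|^2,
\]
because the cross term $2\langle A\varphi,B\varphi\rangle=2\int ab\,\dd\mu\ge0$. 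The same argument gives $\|\sqrt{A}\sqrt{B}\varphi\|^2\le\frac14\|(A+B)\varphi\|^2$. In particular $\varphi_j$ lies in $\dom A\cap\dom B$, with $\|A\varphi_j\|,\|B\varphi_j\|\le\|\psi_j\|$. All constants here are independent of $j$.

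\emph{Second step: translating into derivatives.} We have $A\varphi=-\partial_x^2\varphi$ since $\dom D_j\subset H^2(0,r_j)$, and $B\varphi=-\partial_y^2\varphi$ since $\dom Q\subset H^2(0,\infty)$. To make this rigorous, expand $\varphi$ in the eigenbasis $e_n$ of $D_j$, $\varphi=\sum_n e_n\otimes\varphi_n$ with $\varphi_n\in\dom Q_\perp$. Then
\[
\|\partial_y^2\varphi\|^2=\sum_n\|Q\varphi_n\|^2=\|B\varphi\|^2,\qquad
\|\partial_x^2\varphi\|^2=\|A\varphi\|^2 .
\]
For the mixed derivative,
\[
\|\partial_x\partial_y\varphi\|^2=\sum_n\nu_n\|\varphi_n'\|^2,
\]
where $\nu_n$ are the eigenvalues of $D_j$. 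By \eqref{q-ineq1},
\[
\|\varphi_n'\|^2\le2\|\sqrt{Q_\perp}\varphi_n\|^2+16\|\varphi_n\|^2 .
\]
This yields
\[
\|\partial_x\partial_y\varphi\|^2\le 2\|\sqrt A\sqrt B\varphi\|^2+16\|\sqrt A\varphi\|^2,
\]
and $\|\sqrt A\varphi\|^2=\langle A\varphi,\varphi\rangle\le\|A\varphi\|\|\varphi\|$ is controlled. First derivatives are handled the same way: $\|\partial_x\varphi\|^2=\langle A\varphi,\varphi\rangle$, and $\|\partial_y\varphi\|^2$ is bounded via \eqref{q-ineq1} applied fiberwise. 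Summing the pieces gives $\|\varphi_j\|_{H^2(\Pi_j)}\le C\|\psi_j\|$ with $C$ independent of $j$. It also shows that the distributional second derivatives lie in $L^2$, so $\dom H_j^\perp\subset H^2(\Pi_j)$.

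\emph{Main obstacle.} The delicate point is justifying that the fiberwise expansion identifies the weak derivatives of $\varphi$ on $\Pi_j$. For the mixed derivative this requires checking that $\sum_n e_n'\otimes\varphi_n'$ converges in $L^2$ and equals $\partial_x\partial_y\varphi$ in $\cD'(\Pi_j)$. I would first verify this for finite sums, which are dense in $\dom H_j^\perp$ with respect to the graph norm, and then pass to the limit using the a priori bound and the closedness of weak derivatives.
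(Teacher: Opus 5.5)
Your proposal is correct and follows essentially the same route as the paper. Both arguments use spectral-calculus bounds, uniform in $j$, for $D_j\otimes I$, $I\otimes Q_\perp$ and $\sqrt{D_j}\otimes\sqrt{Q_\perp}$ composed with $(H_j^\perp-\lambda_j)^{-1}$, then the one-dimensional inequality \eqref{q-ineq1} for the $y$-derivative and the mixed derivative, and finally a density argument over finite tensor sums to identify the weak derivatives. In fact your expansion in the eigenbasis of $D_j$ already settles the identification you flag as the main obstacle: the derivative series converge in $L^2$ by your bounds, and distributional derivatives commute with $L^2$ limits.
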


\begin{proof}
First remark that $\varphi_j\in \Tilde H^1(\Pi_j)$, as already noted above. We have
\begin{equation}
\begin{aligned}
\Big|\big\langle (H_j^\perp-\lambda_j)\varphi_j,\varphi_j \big\rangle_{L^2_\perp(\Pi_j)}\Big|&=	\Big| \langle \psi_j,\varphi_j\rangle_{L^2_\perp(\Pi_j)}\Big|\le 
\|\psi_j\|_{L^2_\perp(\Pi_j)}\|\varphi_j\|_{L^2_\perp(\Pi_j)}.
\end{aligned}
\label{fhj}	
\end{equation}
Recall that $D_j\ge 0$ and $Q_\perp\ge 0$, hence, $H_j^\perp \ge 0$, and
\begin{align*}
	\big\langle H_j^\perp \varphi_j,\varphi_j \big\rangle_{L^2_\perp(\Pi_j)}&=\big\langle (D_j\otimes I) \varphi_j,\varphi_j \big\rangle_{L^2_\perp(\Pi_j)}+\big\langle (I\otimes Q_\perp) \varphi_j,\varphi_j \big\rangle_{L^2_\perp(\Pi_j)}\\
	&=\big\|(\sqrt{D_j}\otimes I)\varphi_j\big\|^2_{L^2_\perp(\Pi_j)}+\big\|(I\otimes\sqrt{Q_\perp})\varphi_j\big\|^2_{L^2_\perp(\Pi_j)}.
\end{align*}

Taking into account that $\lambda_j\le-\frac12$, we deduce from \eqref{fhj} that
\begin{multline}
	\big\|(\sqrt{D_j}\otimes I)\varphi_j\big\|^2_{L^2_\perp(\Pi_j)}+\big\|(I\otimes\sqrt{Q_\perp})\varphi_j\big\|^2_{L^2_\perp(\Pi_j)}\\
	+\dfrac{1}{2}\|\varphi_j\|^2_{L^2_\perp(\Pi_j)}\le \|\psi_j\|_{L^2(\Pi_j)}\|\varphi_j\|_{L^2_\perp(\Pi_j)}.
	\label{long00}
\end{multline}
As all three terms on the left-hand side are non-negative, each of them does not exceed the right-hand side. By considering the last summand we obtain
\begin{equation}
	\label{wjup}
	\|\varphi_j\|_{L^2_\perp(\Pi_j)}\le 2\|\psi_j\|_{L^2(\Pi_j)},
\end{equation}
while the first summand gives
\begin{equation}
	\label{wx1}
\begin{aligned}
	\iint_{\Pi_j} (\partial_x \varphi_j)^2\dxy&\equiv \big\|(\sqrt{D_j}\otimes I)\varphi_j\big\|^2_{L^2_\perp(\Pi_j)}\\
	&\le \|\psi_j\|_{L^2(\Pi_j)}\|\varphi_j\|_{L^2_\perp(\Pi_j)}\stackrel{\eqref{wjup}}{\le}2\|\psi_j\|^2_{L^2(\Pi_j)}.
\end{aligned}
\end{equation}	
Finally, by using the one-dimensional  inequality \eqref{q-ineq1} we obtain
\begin{align*}
	\|\partial_y\varphi_j\|^2_{L^2(\Pi_j)}&\le 2\big\|(I\otimes\sqrt{Q_\perp})\varphi_j\big\|^2_{L^2_\perp(\Pi_j)}
	+16 \|\varphi_j\|^2_{L^2_\perp(\Pi_j)}\\
\text{use \eqref{long00} and \eqref{wjup}: }	&\le 2\|\psi_j\|_{L^2(\Pi_j)}\|\varphi_j\|_{L^2_\perp(\Pi_j)}+64 \|\psi_j\|^2_{L^2_\perp(\Pi_j)}\le 68 \|\psi_j\|_{L^2(\Pi_j)}^2.
\end{align*}
By combining the last inequality with \eqref{wjup} and \eqref{wx1} we conclude that 
\begin{equation}
	\label{phih1}
\|\varphi_j\|^2_{H^1(\Pi_j)}\le C' \|\psi_j\|^2_{L^2(\Pi_j)}
\end{equation}
for all $j$ and some $C'>0$ independent of $j$.

Now note that due to the spectral theorem the operators
\begin{gather*}
S_{1,j}:=(D_j\otimes I) (H_j^\perp-\lambda_j)^{-1},\quad
S_{2,j}:=(I\otimes Q_\perp) (H_j^\perp-\lambda_j)^{-1},\\
S_{3,j}:=2(\sqrt{D_j}\otimes \sqrt{Q_\perp})(H_j^\perp-\lambda_j)^{-1}
\end{gather*}
are bounded in $L^2_\perp(\Pi_j)$, and their norms can be controlled by a $j$-independent constant.
Namely, if $D_j$ is unitarily equivalent to the operator of multiplication by some function $\tau_j\ge 0$ in $L^2(M_j,\nu_j)$
and $Q_\perp$ is unitarily equivalent to the operator of multiplication by a function $\rho\ge 0$ in $L^2(N,\varrho)$,
then the above operators $S_{\ell,j}$ are unitarily equivalent to the operators of multiplication by the functions $s_{\ell,j}$
in $L^2(M_j\times N,\nu_j\otimes\varrho)$,
\begin{gather*}
	s_{1,j}:\ (m,n)\mapsto \dfrac{\tau_j(m)}{\tau_j(m)+\rho(n)-\lambda_j},\quad
	s_{2,j}:\ (m,n)\mapsto \dfrac{\rho(n)}{\tau_j(m)+\rho(n)-\lambda_j},\\
	s_{3,j}:\ (m,n)\mapsto \dfrac{2\sqrt{\tau_j(m)\rho(n)}}{\tau_j(m)+\rho(n)-\lambda_j},
\end{gather*}
and then
\begin{equation}
	\label{sjnorm}
\|S_{\ell,j}\|_{L^2_\perp(\Pi_j)\to L^2_\perp(\Pi_j)}=\|s_{\ell,j}\|_{L^\infty(M_j\times N,\nu_j\otimes\varrho)}\le 1,\quad \ell\in\{1,2,3\}.
\end{equation}
Using similar arguments we have the inclusion
\[
\dom(D_j\otimes I+I\otimes Q_\perp)\subset \dom(D_j\otimes I)\cap\dom (I\otimes Q_\perp).
\]

Consider the subspace
\[
\cD_j:=\vspan\{  a\otimes b: a\in \dom D_j,\ b\in \dom Q_\perp\}\subset \dom(D_j\otimes I+I\otimes Q_\perp).
\]
Due to $\dom D_j\subset H^2(0,r_j)$ and $\dom Q_\perp\subset H^2(0,\infty)$ we have $\cD_j\subset H^2(\Pi_j)$.
Assume first $\psi_j\in (H_j^\perp-\lambda_j)(\cD_j)$, then $\varphi_j\in \cD_j$, and
\begin{equation}
	\label{pxx}
\begin{aligned}
\|\partial^2_{xx} \varphi_j\|^2_{L^2(\Pi_j)}&=\big\|(D_j\otimes I)\varphi_j\big\|_{L^2(\Pi_j)}^2=\big\|S_{1,j}\psi_j\big\|_{L^2(\Pi_j)}^2\stackrel{\eqref{sjnorm}}{\le} \|\psi_j\big\|_{L^2(\Pi_j)}^2,\\
\|\partial^2_{yy} \varphi_j\|^2_{L^2(\Pi_j)}&=\big\|(I\otimes Q_\perp)\varphi_j\big\|_{L^2(\Pi_j)}^2=\big\|S_{2,j}\psi_j\big\|_{L^2(\Pi_j)}^2\stackrel{\eqref{sjnorm}}{\le} \|\psi_j\big\|_{L^2(\Pi_j)}^2.
\end{aligned}
\end{equation}
Due to $\varphi_j\in H^2(\Pi_j)\cap L^2_\perp(\Pi_j)$ we have $\partial_x\varphi_j\in  H^1(\Pi_j)\cap L^2_\perp(\Pi_j)\subset\dom(I\otimes\sqrt{Q_\perp})$,
and with the help of the one-dimensional  inequality \eqref{q-ineq1} we obtain
\begin{equation}
	\label{pxy}
	\|\partial^2_{xy}\varphi_j\|^2_{L^2(\Pi_j)}\le 2\big\|(I\otimes\sqrt{Q_\perp})\partial_x\varphi_j\big\|^2_{L^2_\perp(\Pi_j)}
	+16 \|\partial_x\varphi_j\|^2_{L^2_\perp(\Pi_j)}.
\end{equation}
Due to $\varphi_j\in \cD_j\subset\dom (D_j\otimes Q_\perp)$ we may represent
\begin{gather*}
\varphi_j=\sum_{k=1}^N c_k a_k\otimes b_k,\quad c_k\in\RR, \quad a_k\in \dom D_j,\quad b_k\in \dom Q_\perp,\\
(I\otimes\sqrt{Q_\perp})\partial_x\varphi_j=\sum_{k=1}^N c_k (\partial_x a_k)\otimes \sqrt{Q_\perp}b_k=\partial_x (I\otimes\sqrt{Q_\perp})\varphi_j,
\end{gather*}
while $(I\otimes\sqrt{Q_\perp})\varphi_j\in \dom(\sqrt{D_j}\otimes I)$.
This allows one to transform
\begin{align*}
\big\|(I\otimes\sqrt{Q_\perp})\partial_x\varphi_j\big\|^2_{L^2_\perp(\Pi_j)}&=\big\|\partial_x(I\otimes\sqrt{Q_\perp})\varphi_j\big\|^2_{L^2_\perp(\Pi_j)}\\
&=\big\|(\sqrt{D_j}\otimes I)(I\otimes\sqrt{Q_\perp})\varphi_j\big\|^2_{L^2_\perp(\Pi_j)}\\
&=\big\|(\sqrt{D_j}\otimes\sqrt{Q_\perp})\varphi_j\big\|^2_{L^2_\perp(\Pi_j)}\\
&=\dfrac{1}{4}\big\|S_{3,j}\psi_j\big\|^2_{L^2_\perp(\Pi_j)}\stackrel{\eqref{sjnorm}}{\le}\dfrac{1}{4}\big\|\psi_j\big\|^2_{L^2_\perp(\Pi_j)}.
\end{align*}
The substitution into \eqref{pxy} yields
\[
\|\partial^2_{xy}\varphi_j\|^2_{L^2(\Pi_j)}\le \dfrac{1}{2}\|\psi_j\|^2_{L^2_\perp(\Pi_j)}+16 \|\partial_x\varphi_j\|^2_{L^2_\perp(\Pi_j)}\stackrel{\eqref{phih1}}{\le} \Big(\dfrac{1}{2}+16C'\Big)\|\psi_j\|^2_{L^2_\perp(\Pi_j)},
\]
and by combining with \eqref{pxx} we arrive at
\[
\|\partial^2_{xx} \varphi_j\|^2_{L^2(\Pi_j)}+\|\partial^2_{yy} \varphi_j\|^2_{L^2(\Pi_j)}+\|\partial^2_{xy} \varphi_j\|^2_{L^2(\Pi_j)}\le C''\|\psi_j\|^2_{L^2_\perp(\Pi_j)}
\]
for all $j$ and some fixed $C''>0$. Together with the estimate \eqref{phih1} for the $H^1$-norm we arrive at
\[
\|\varphi_j\|_{H^2(\Pi_j)}\le C\|\psi_j\|_{L^2_\perp(\Pi_j)},\quad \psi_j\in(H_j^\perp-\lambda_j)(\cD_j),
\]
for all $j$ with some fixed $C>0$. As $\lambda_j<0\le\inf\spec H_j^\perp$
and $H_j^\perp$  is by construction essentially self-adjoint on $\cD_j$, the inequality extends by density to all $\psi_j\in L^2_\perp(\Pi_j)$.
\end{proof}

\begin{lemma}\label{lemh22}
	There exist $C>0$ and $j_0\in\NN$ such that $\|\varphi\|_{L^\infty(\Pi_j)}\le C\|\varphi\|_{H^2(\Pi_j)}$
	for all $\varphi\in H^2(\Pi_j)$ and all $j\ge j_0$.
\end{lemma}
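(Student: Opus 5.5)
The idea is to reduce to a fixed reference geometry by covering $\Pi_j=(0,r_j)\times(0,\infty)$ with unit squares, applying the standard Sobolev embedding $H^2\hookrightarrow L^\infty$ on a single square $Q:=(0,1)^2$, and transporting the constant by translation. In two dimensions, $H^2(Q)\hookrightarrow C^0(\overline{Q})$ is continuous; this follows from the Sobolev embedding theorem for bounded Lipschitz domains, and for a square one can also argue by reflection and Fourier transform. So there is $C_0>0$ with
\[
\|\varphi\|_{L^\infty(Q)}\le C_0\|\varphi\|_{H^2(Q)},\quad \varphi\in H^2(Q).
\]
Both the Laplacian-free $H^2$ norm and the sup norm are invariant under translations. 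Hence the same inequality with the same $C_0$ holds on every translated square $(a,a+1)\times(b,b+1)$.

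Now choose $j_0$ with $r_j\ge 1$ for $j\ge j_0$. For such $j$, every point $(x,y)\in\Pi_j$ lies in some square
\[
Q_{a,b}:=(a,a+1)\times(b,b+1)\subset\Pi_j .
\]
One can take $a:=\min\{\max\{x-\tfrac12,0\},\,r_j-1\}$ and $b:=\max\{y-\tfrac12,0\}$. Strictly speaking the point may lie on the boundary of $Q_{a,b}$; this does not matter, since we work with $L^\infty$ almost everywhere, or with the continuous representative on the closure. The squares $Q_{a,b}$ for $a\in[0,r_j-1]$ and $b\ge0$ cover $\Pi_j$ up to a null set. Using $\|\varphi\|_{H^2(Q_{a,b})}\le\|\varphi\|_{H^2(\Pi_j)}$ and taking the supremum over all such squares gives
\[
\|\varphi\|_{L^\infty(\Pi_j)}\le C_0\|\varphi\|_{H^2(\Pi_j)} .
\]
The restriction $r_j\ge1$ is the only reason a threshold $j_0$ is needed; it guarantees that a unit square fits in the $x$-direction.

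The only point needing care is the uniformity in $j$. It is obtained precisely because we never use an extension operator on $\Pi_j$ itself, whose norm could depend on $r_j$. Everything is reduced to one fixed square. No real obstacle is expected beyond quoting the embedding on $Q$ correctly.
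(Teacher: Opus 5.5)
Your proof is correct, but it follows a different route from the paper's.

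The paper splits $\varphi=\rho\varphi+(1-\rho)\varphi$ with a cut-off $\rho(x,y)=\chi(x)$ supported near $x=0$. It extends the two pieces by zero to the quarter-planes $(0,\infty)\times(0,\infty)$ and $(-\infty,r_j)\times(0,\infty)$, which are isometric copies of each other, and applies the Sobolev embedding on the quarter-plane. The resulting constant $2c_0c_1$ involves the product-rule constant $c_1$ of the cut-off, and the construction needs $r_j>2$.

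You instead use that the $L^\infty$ norm is local while the $H^2$ norm is monotone under restriction. This way the embedding constant of a single bounded square suffices. You need no cut-off, no extension by zero and no unbounded model domain, and only $r_j\ge 1$. This is the standard uniform-cone-type argument, and it applies verbatim to any domain that is a union of congruent copies of one fixed Lipschitz domain.

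Two small remarks on the write-up:
\begin{itemize}
\item With your choice of $a$ and $b$, the point $(x,y)$ always lies in the \emph{open} square $Q_{a,b}$, so the boundary caveat is unnecessary.
\item You take a supremum over uncountably many squares, so the passage to the essential supremum over $\Pi_j$ needs a justification. Either use the continuous representative of $\varphi$, available because $H^2_{\loc}\subset C^0$ in dimension two, or restrict to a countable subfamily. For example, take corners on a half-integer grid together with $a=r_j-1$; these open squares still cover $\Pi_j$.
\end{itemize}
The paper's decomposition into just two pieces avoids this measure-theoretic point entirely; that is the one advantage its approach has over yours.
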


\begin{proof}
Denote $\Omega:=(0,\infty)\times(0,\infty)$, then there exists a constant $c_0>0$ such that 	$\|\varphi\|_{L^\infty(\Omega)}\le c_0\|\varphi\|_{H^2(\Omega)}$ due to Sobolev embedding theorem. 
Pick a large $j_0\in\NN$ to have $r_j>2$ for all $j\ge j_0$, which is assumed in all subsequent inequalities.

Further pick a function $\chi\in C^\infty(\RR)$ with $\chi(t)=1$ for $t\le 1$ and $\chi(t)=0$ for $t\ge 2$, and set $\rho(x,y):=\chi(x)$. For any $j\ge j_0$ and any $\varphi\in H^2(\Pi_j)$ one has
$\rho \varphi\in H^2(\Omega)$ and $(1-\rho)\varphi\in H^2(\Omega_j)$ for $\Omega_j:=(-\infty,r_j)\times(0,\infty)$ (with both $\rho\varphi$ and $(1-\rho)\varphi$ extended by zero),
and for some $c_1>0$ which only depends on $\chi$ one has 
\begin{equation}
	\label{rhorho1}
\|\rho \varphi\|_{H^2(\Omega)}\le c_1\|\varphi\|_{H^2(\Pi_j)},
\quad
\|(1-\rho) \varphi\|_{H^2(\Omega_j)}\le c_1\|\varphi\|_{H^2(\Pi_j)}.
\end{equation}
As each $\Omega_j$ is an isometric copy of $\Omega$, we also have for all $\varphi\in H^2(\Pi_j)$:
\begin{equation}
	\label{rhorho2}
\|\rho \varphi\|_{L^\infty(\Omega)}\le c_0\|\rho \varphi\|_{H^2(\Omega)},\quad
\|(1-\rho) \varphi\|_{L^\infty(\Omega_j)}\le c_0\|(1-\rho) \varphi\|_{H^2(\Omega_j)}.
\end{equation}
Hence, for all $\varphi\in H^2(\Pi_j)$ we obtain
\begin{align*}
	\|\varphi\|_{L^\infty(\Pi_j)}&\le\|\rho \varphi\|_{L^\infty(\Pi_j)}+\|(1-\rho)\varphi\|_{L^\infty(\Pi_j)}
	\le \|\rho \varphi\|_{L^\infty(\Omega)}+\|(1-\rho)\varphi\|_{L^\infty(\Omega_j)}\\
	&\stackrel{\eqref{rhorho2}}{\le} c_0\big(\|\rho \varphi\|_{H^2(\Omega)}+\|(1-\rho) \varphi\|_{H^2(\Omega_j)}\big)\stackrel{\eqref{rhorho1}}{\le} 2c_0c_1\|\varphi\|_{H^2(\Pi_j)}. \qedhere
\end{align*}
\end{proof}
	
\begin{corollary}\label{corol-wj}
	Assume that $F=0$, then $\|w_j\|_{L^\infty(\Pi_j)}\xrightarrow{j\to\infty}0$.
\end{corollary}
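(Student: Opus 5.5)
The plan is to combine Lemma~\ref{lemh21} with Lemma~\ref{lemh22} and then use the convergence \eqref{fj-conv0} from Lemma~\ref{lem-fj}. By \eqref{eq-wj} we have $w_j=(H_j^\perp-\lambda_j)^{-1}f_j^\perp$, where $f_j^\perp=P_j^\perp f_j\in L^2_\perp(\Pi_j)$. This is exactly the setting of Lemma~\ref{lemh21} with $\psi_j:=f_j^\perp$ and $\varphi_j:=w_j$. It applies because $\lambda_j<0\le\inf\spec H_j^\perp$, so the resolvent is well defined, and because we assumed $\lambda_j\le-\frac12$ for all $j$. Lemma~\ref{lemh21} therefore gives $w_j\in H^2(\Pi_j)$ with
\[
\|w_j\|_{H^2(\Pi_j)}\le C\|f_j^\perp\|_{L^2(\Pi_j)}\le C\|f_j\|_{L^2(\Pi_j)},
\]
where the last step uses that $P_j^\perp$ is an orthogonal projector and hence has norm at most $1$.

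Next, Lemma~\ref{lemh22} applies to $w_j\in H^2(\Pi_j)$ with a $j$-independent constant, for all $j\ge j_0$. This yields
\[
\|w_j\|_{L^\infty(\Pi_j)}\le C'\|w_j\|_{H^2(\Pi_j)}\le CC'\|f_j\|_{L^2(\Pi_j)}.
\]
Under the assumption $F=0$, the estimate \eqref{fj-conv0} in Lemma~\ref{lem-fj} gives $\|f_j\|_{L^2(\Pi_j)}\to0$, and the claim follows.

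There is no real obstacle here. The only point to check is that the constants in both lemmas are uniform in $j$, which is exactly how they are stated. One also needs that $w_j$ genuinely lies in $\dom H_j^\perp$, so that it is given by the resolvent formula. This holds because $v_j\in\dom H_j$ by \eqref{vj-tensor1}, and $P_j^\perp$ commutes with $H_j$, since $p$ is a spectral projector of $Q$.
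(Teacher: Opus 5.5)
Your proposal is correct and follows the paper's proof exactly. You chain Lemma~\ref{lemh22}, then Lemma~\ref{lemh21} applied with $\psi_j=P_j^\perp f_j$, then $\|P_j^\perp f_j\|\le\|f_j\|$ and \eqref{fj-conv0}; your extra remark that $w_j\in\dom H_j^\perp$ because $P_j^\perp$ commutes with $H_j$ is a harmless justification of \eqref{eq-wj}, which the paper simply asserts.
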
	
	
\begin{proof}
We have, with a suitable fixed constant $C>0$ and all sufficiently large $j$:
\begin{align*}
	\|w_j\|_{L^\infty(\Pi_j)}&\le C\|w_j\|_{H^2(\Pi_j)} && \text{by Lemma~\ref{lemh22}}\\
	&\le C^2\big\|(H_j^\perp-\lambda_j)w_j\big\|_{L^2_\perp(\Pi_j)}&& \text{by Lemma~\ref{lemh21}}\\
	&=C^2\|P_j^\perp f_j\|_{L^2_\perp(\Pi_j)}&& \text{by \eqref{eq-wj}}\\
	&\le C^2\|f_j\|_{L^2(\Pi_j)}\xrightarrow{j\to \infty} 0 && \text{by Lemma~\ref{lem-fj}}.\qedhere
\end{align*}
\end{proof}	
	
\subsection{Finalizing the proof}\label{ssec-prop41}	

Let us collect all the preceding statements to complete the proof of Lemma~\ref{prop41}.
Assume that the limiting bounded solution $F$ of the threshold resonance problem~\eqref{FFF} constructed in Lemma~\ref{lem42}
and Lemma~\ref{lem48a} is identically zero. Pick any $h>0$, then by Lemma~\ref{lem410} one can find $j_0\in\NN$ such that
\begin{equation}
	\label{uj-bd1}
\big|u_j(x,y)\big|<\frac12 \text{ for all } j\ge j_0,\quad (x,y)\in U_{r_j}\cap\big\{(x,y):\ y> x\tan\theta-3h\big\}.
\end{equation}

Furthermore, by construction, for all $(x,y)\in U_{r_j}\cap\big\{(x,y):\ y< x\tan\theta-2h\big\}\subset \Pi_j$ and $j\ge j_0$ we have,
for $v_j$ defined in \eqref{eq-vj},
\begin{align*}
\big|u_j(x,y)\big| &= |v_j(x,y)|=\big|g_j(x)\psi(y)+w_j(x,y)\big|\\
&\le \big|g_j(x)\psi(y)\big|+\big|w_j(x,y)\big|\le \sqrt{2}\big|g_j(x)\big|+\big|w_j(x,y)\big|,
\end{align*}
with $g_j$ and $w_j$ as in Subsections~\ref{item444} and~\ref{item445} respectively.
We estimate $g_j$ by Lemma~\ref{lem-gj} and $w_j$ by Corollary~\ref{corol-wj}, which allows one to
increase  $j_0$ to have $\sqrt{2}\big|g_j(x)\big|<\frac{1}{4}$ and $|w_j(x,y)|<\frac{1}{4}$ in the preceding inequality,
which shows
\begin{equation}
	\label{uj-bd2}
\big|u_j(x,y)\big|<\frac12 \text{ for all } j\ge j_0,\quad (x,y)\in U_{r_j}\cap\big\{(x,y):\ y< x\tan\theta-2h\big\}.
\end{equation}

By combining \eqref{uj-bd1} and \eqref{uj-bd2} we obtain $\|u_j\|_{L^\infty(U_{r_j})}\le \frac{1}{2}$ for all $j\ge j_0$, which contradicts the initial normalization \eqref{unorm} of $u_j$.
Hence, $F\not\equiv 0$. Lemma~\ref{prop41} is proved.

\begin{corollary}\label{cor-last2}
	For each $\theta\in(0,\frac{\pi}{2})$ exactly one of the following statements holds true:
	\begin{itemize}
		\item[(a)] For each $c\in(0,\frac{\pi^2}{4})$ one has
		\begin{equation}
			\label{ek1}
			E_{\kappa+1}(N_r)\ge -1+\dfrac{c}{r^2} \text{ for all sufficiently large $r$;}
		\end{equation}
		in this case $\theta$ is non-critical.
		\item[(b)] There is $d>0$ such that
		\begin{equation}
			\label{ek2}
	E_{\kappa+1}(N_r)\le -1+d^{-1}e^{-dr} \text{ for all sufficiently large $r$;}
\end{equation}
in this case $\theta$ is critical.
\end{itemize}
\end{corollary}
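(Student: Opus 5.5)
This corollary is a dichotomy assembled from Lemma~\ref{prop41a}, Lemma~\ref{prop41} and the classification of Section~\ref{sec3}. I will split according to whether $\theta$ is critical, i.e.\ whether $\theta=\theta_n=\frac{\pi}{2(2n+1)}$ for some $n\in\NN$. First I will show that critical angles give (b) and non-critical angles give (a). Then I will check that (a) and (b) cannot both hold, so that exactly one of them is true.

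\emph{Critical case.} Let $\theta=\theta_n$. By Lemma~\ref{thm31} (together with \eqref{fsum3}), the problem \eqref{FFF} has the nontrivial bounded solution given by the Ursell function $\Tilde\Psi_n$. Lemma~\ref{prop41a} then yields a constant $c>0$ with $E_{\kappa+1}(N_r)\le -1+c^{-1}e^{-cr}$ for large $r$, which is \eqref{ek2} with $d:=c$.

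\emph{Non-critical case.} Suppose $\theta$ is not critical, and fix $c\in(0,\frac{\pi^2}{4})$. I argue by contradiction: assume \eqref{ek1} fails. Then there is a sequence $r_j\to\infty$ with $E_{\kappa+1}(N_{r_j})<-1+c/r_j^2$, which is exactly hypothesis \eqref{lest2} with $b:=c$. Lemma~\ref{prop41} then produces a bounded weak solution $F\not\equiv0$ of \eqref{FFF}. By Lemma~\ref{thm31}, $\dim\cF^N_\theta=0$ for non-critical $\theta$, so no such $F$ exists, a contradiction. Hence \eqref{ek1} holds for every $c\in(0,\frac{\pi^2}{4})$.

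\emph{Exclusivity.} Fix any $c\in(0,\frac{\pi^2}{4})$. If both \eqref{ek1} and \eqref{ek2} held, then for large $r$ we would have $-1+c/r^2\le -1+d^{-1}e^{-dr}$, i.e.\ $c\le r^2d^{-1}e^{-dr}$. The right-hand side tends to $0$ as $r\to\infty$, which is impossible. Combined with the two cases, this shows that exactly one of (a), (b) holds, that (a) occurs precisely for non-critical $\theta$, and that (b) occurs precisely for critical $\theta$.

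All the substantial work sits in Lemma~\ref{prop41}; at this stage the only point needing care is matching its hypothesis with $b=c$ and the strict inequality $c<\frac{\pi^2}{4}$.
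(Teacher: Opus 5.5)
Your proof is correct and follows the paper's argument. It uses the same ingredients: Lemma~\ref{prop41a}, Lemma~\ref{prop41} with $b:=c$, the classification of Section~\ref{sec3}, and the incompatibility of \eqref{ek1} and \eqref{ek2}; the only difference is that you split cases by whether $\theta$ is critical, whereas the paper splits by whether \eqref{ek1} holds.
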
	

\begin{proof}
Assume that the inequality \eqref{ek1} in (a) is satisfied. If $\theta$ were critical, then one would
obtain \eqref{ek2} by Lemma~\ref{prop41a}, and one easily sees that \eqref{ek1} and \eqref{ek2} cannot hold simultaneously. Hence, $\theta$ is non-critical.

Now assume that the inequality \eqref{ek1} in (a) is \emph{not} satisfied. Then for some subsequence $(r_j)$ with $r_j\to\infty$ and some $b\in\big(0,\frac{\pi^2}{4}\big)$
	one has
	\[
	E_{\kappa+1}(N_{r_j})\le -1+\dfrac{b}{r_j^2},
	\]
	and one has threshold resonance by Lemma~\ref{prop41}, which shows that $\theta$ is critical by the results of Section~\ref{sec3}, and then Lemma~\ref{prop41a} shows \eqref{ek2}.
\end{proof}

\begin{remark}
	In view of the decomposition \eqref{decomp1} and the lower bound \eqref{eq-dr}, the assertions
	of Corollary~\ref{cor-last2} also hold if one replaces $N_r$ by the original Robin-Neumann Laplacian $\Tilde N^r_\theta$
	in the $r$-dependent quadrangle $OA^+_rB_rA^-_r$.
\end{remark}
	
	\section*{Statements and declarations}
\subsection*{Acknowledgments}

\addtocontents{toc}{\protect\setcounter{tocdepth}{1}}

The author thanks Michael Levitin who brought the reference \cite{kmz} to his attention and provided him with the expressions~\eqref{urs00} that correct a typo in \cite{lyal}. The figures were prepared with the online editor \url{http://mathcha.io}.

\subsection*{Use of artificial intelligence (AI)} Some of the proofs were found during a discussion with an AI system (ChatGPT).
In particular, the AI system suggested the overall proof scheme by contradiction for
Lemma~\ref{prop41} while searching for an adaptation of the paper~\cite{bakh} to the sector geometry,
and it also found the necessary elliptic estimates in~Subsection~\ref{ssec22}.
The whole text is written by the author from scratch and does not contain any automatically generated part.
The author takes
full responsibility for the mathematical arguments contained in the article.

\subsection*{Data availability}

Data sharing is not applicable to this article as no datasets were generated or analyzed during the current study.

\subsection*{Competing interests}

The author has no competing interests to declare that are relevant to the contents of this article.

\end{document}